\documentclass[English,11pt]{amsart}
\usepackage[T1]{fontenc}
\usepackage[utf8]{inputenc} % may help to print special character in .bib

\usepackage{amssymb}
\usepackage{amsmath,amsthm, amsfonts, thmtools, thm-restate}

\usepackage[linkcolor=red!80!black,colorlinks=true,citecolor=green!65!blue, urlcolor=blue!70!black]{hyperref}
\usepackage{cleveref}

\usepackage{enumitem}

\usepackage{url}
\usepackage[backend=biber, style=ieee-alphabetic, maxnames=5, maxalphanames=5, giveninits=true, sorting=nyt, isbn=false, doi=false, url=false, eprint=false]{biblatex} %Imports biblatex package

\usepackage{xfrac}

\usepackage{multirow}

\usepackage{lmodern} % to deal with sizes of font shape
\usepackage{anyfontsize} % to deal with sizes of font shape

\DeclareNameAlias{author}{given-family}
\AtEveryBibitem{\clearlist{language}}
\AtEveryBibitem{\clearlist{month}}
\newtheorem{thm}{Theorem}[section]

\newtheorem{lem}[thm]{Lemma}

\newtheorem{dfn}[thm]{Definition}
\newtheorem{rmk}[thm]{Remark}

\newtheorem{theorem}{Theorem}

\usepackage{graphicx}
\usepackage[left=3cm,right=3cm,top=4cm,bottom=3cm, marginparwidth=2.35cm]{geometry} % Pour fixer les marges du document
\usepackage{tabularx}
\usepackage{mathrsfs} % pour les lettres caligraphiées

\usepackage{caption}
\usepackage{subcaption}

\usepackage{pgf,tikz}
\usetikzlibrary{arrows}
\usetikzlibrary{arrows.meta} %for tip kind
\usepackage{cleveref}
\usepackage{color}
\newcommand{\RR}{\mathbb{R}}
\newcommand{\NN}{\mathbb{N}}
\newcommand{\ZZ}{\mathbb{Z}}

\newcommand{\GL}{\mathrm{GL}}

\definecolor{T-MC1}{RGB}{255,255,255} %white
\definecolor{T-MC2}{RGB}{238,204,102} %light brown
\definecolor{T-MC3}{RGB}{238,153,170} %pink
\definecolor{T-MC4}{RGB}{102,153,204} %lightblue
\definecolor{T-MC5}{RGB}{153,119,0} %dark brown
\definecolor{T-MC6}{RGB}{153,68,85} %purple
\definecolor{T-MC7}{RGB}{0,68,136} %deep blue
\definecolor{T-MC0}{RGB}{0,0,0} %black

\definecolor{colR}{rgb}{0.6,0.2,0} % color of the rectangles
\definecolor{colright}{RGB}{0,125,150} % light blue for some traj (reversed)
\definecolor{colleft}{rgb}{0.05,0,0.5} % deep blue for some traj (reversed)
\definecolor{colmid}{rgb}{0.9,0.5,0} % orange for some traj (translated)
\definecolor{colmid2}{rgb}{0.75,0.05,0} % deeper orange for some traj (translated)

\title{Wind-tree tiling billiards and their trapping strips}
\author{Magali Jay}

\begin{document}
\maketitle

\textbf{Abstract.} 
We introduce a new dynamical system: the wind-tree tiling billiards. This system studies trajectories of a ray in Euclidean space which has a negative refractive index when encountering rectangular obstacles located at lattice points. We show that for almost every configuration of the system, trajectories with initial vertical direction are trapped in an infinite strip of the plane. This result is reminiscent of the propagation of light rays in Eaton lenses, as shown by Fr\k{a}czek and Schmoll \cite{FS}.

\section{Introduction}

We will define the wind-tree tiling billiards, which was motivated by combining two dynamical systems. The first is the wind-tree model. Introduced in 1912 by Tatiana and Paul Ehrenfest \cite{Ehrenfest} to study the movement of gas molecules, the wind-tree model is a billiards in the infinite Euclidean plane with rectangular obstacles on lattice points. 
This system has been extensively studied over the past decades. 
A generic trajectory is recurrent \cite{AvilaHubert}, has a diffusion rate of $\frac{2}{3}$ \cite{DHL}, which also holds in average \cite{Barazer}.

The second dynamical system is tiling billiards. Tiling billiards are a class of dynamical systems introduced by Davis et al. in 2018 \cite{DDiPRStL}, motivated by the discovery of metamaterials with negative refractive index \cite{SSS01,SPW04}. They have been studied in several but yet not many configurations:
\begin{enumerate}
    \item In the tri-hexagonal tiling \cite{DH}, where generic trajectories are dense in the plane except in a periodic family of triangles;
    \item In a triangle tiling (where each tile is a $180^\circ$ rotation of each of its neighbors), a generic trajectory is either periodic or linearly escaping (meaning that there exists a direction $v\in\RR^2$ such that the position $p_n$ of the trajectory after $n$ refractions is approximated b $nv$: the sequence $(|p_n-nv|)_n$ is bounded), see \cite{BSDFI18,PR19,HPR22};
    \item In a cyclic quadrilateral tiling (meaning that the quadrilateral is inscribed in a circle), see \cite{DHMPRS};
    \item With polygons inscribed in a half circle (which do not tile the plane but still allow to define a dynamical system), an open set of trajectories are escaping to infinity, with unbounded deviations upon the asymptotic direction, see \cite{jay}.
\end{enumerate}

In \Cref{subsec:statement-thm}, we will introduce the wind-tree tiling billiards, which takes the wind-tree model, but consider the obstacles as metamaterials of opposite refraction index: the trajectory cross the obstacle and is refracted.
In contrast to the study of tiling billiards and the wind-tree model, we show that for almost every configuration of the system, trajectories with initial vertical direction are trapped in an infinite strip of the plane (see \Cref{thm:tb_in_wt}). In \Cref{subsec:connections-other-work} we will give more context for this result in the scope of the field, and in \Cref{subsec:strategy} we will give the strategy and overview of the proof, including a key theorem (\Cref{thm:bounded_intersection}) on bounding algebraic intersection number between homology cycles and trajectories.

\subsection{Statement of the main theorem}\label{subsec:statement-thm}
Let $\Lambda \subset \RR^2$ be a lattice, let $a,b>0$, and let $\theta\in \left(0,\pi\right/2)$. We consider an array of rectangles of size $a\times b$ with their centers placed on points of the lattice $\Lambda$, and making an angle $\theta$ with the horizontal (see \Cref{fig:setting}).
We restrict to tuples $(\Lambda,a,b,\theta)$ where the rectangles are pairwise disjoint in the plane, and say these tuples are \textit{admissible}.
\begin{figure}[h!]
\centering
\definecolor{coltheta}{rgb}{0,0,0}%{0,0.2,0.6}
\def\eps{0.03}
\def\xa{3.9}
\def\ya{1.3}
\def\xb{-1}
\def\yb{3}
\def\xp{-0.72}
\def\xpp{3.62}
\def\ux{7.8} % x coordinate of the first basis vector of \Lambda
\def\uy{-1.2} % y coordinate of the first basis vector of \Lambda
\def\vx{-1} % x coordinate of the 2nd basis vector of \Lambda
\def\vy{6} % y coordinate of the 2nd basis vector of \Lambda
\begin{tikzpicture}[line cap=round,line join=round,>=stealth,x=0.4cm,y=0.4cm]
%\clip(-1.2,-0.5) rectangle (4.1,4.5);
\foreach \k in {0,1}
\foreach \j in {0,1}
{\draw[color=colR,opacity=0.5,fill=colR, fill opacity=0.15, shift={(\j*\ux+\k*\vx,\j*\uy+\k*\vy)}] (0,0) -- (3.9,1.3) -- (2.9,4.3) -- (-1,3) -- cycle; %midle = 1.45, 2.15
}
\draw[gray!50!black] (1.45-\ux/2-\vx/2,2.15-\uy/2-\vy/2) -- ++(\ux,\uy) -- ++(\vx,\vy) -- ++(-\ux,-\uy) -- cycle; %midle = \ux-\vx, \vy-\ux
%\draw[gray] (2.45-\ux/2+\vx/2,2.15+\uy/2-\vy/2) -- (2.45+\ux/2+\vx/2,2.15+1.5*\uy-\vy/2) -- (2.45+\ux/2+\vx/2,2.15+1.5*\uy+\vy/2) -- (2.45-\ux/2+1.5*\vx,2.15+\uy/2+\vy/2) -- cycle;
\begin{footnotesize}
\draw [color=coltheta,fill=coltheta,fill opacity=0.1] (0,0) -- (0.8,0) arc (0:18.43:0.8) -- cycle;
\draw [color=coltheta,right] (0.8,0) node {$\theta$};
\draw [<->,shift={(0.05*\xb,0.05*\yb)}] (\xb,\yb)--(\xb+\xa,\yb+\ya);
\draw [above=4] (\xa/2+\xb,\ya/2+\yb) node {$a$};
\draw [<->,shift={(0.05*\xa,0.05*\ya)}] (\xa,\ya)--(\xb+\xa,\yb+\ya);
\draw [right=4] (\xa+\xb/2,\ya+\yb/2) node {$b$};
\draw [gray!50!black, left] (1.45-\ux/2+\vx/2,2.15-\uy/2+\vy/2) node {$\Lambda$};
\end{footnotesize}
\end{tikzpicture}
\caption{Setting of our systems}\label{fig:setting}
{\small
The parallelogram in grey is a fundamental domain for the lattice $\Lambda$ and the vectors generating its two sides are in $\Lambda$.
 }
\end{figure}
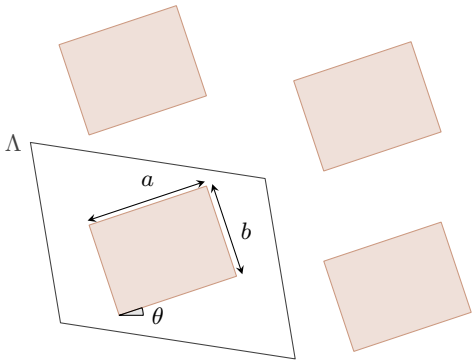

The trajectories we study go in a straight line outside the rectangles, and are refracted when crossing the rectangles. See Figure \ref{fig:tb}. We call them \emph{tiling billiards trajectories}.
We consider the tiling billiards trajectories with an initial \emph{vertical} direction 
(going either upward or downward) from any point of the plane outside the rectangles. 
If a trajectory reaches a corner, it stops. We call this system \emph{the wind-tree tiling billiards}, which we denote by $W(\Lambda,a,b,\theta)$.
\begin{figure}[h!]
\centering
\includegraphics[scale=0.2]{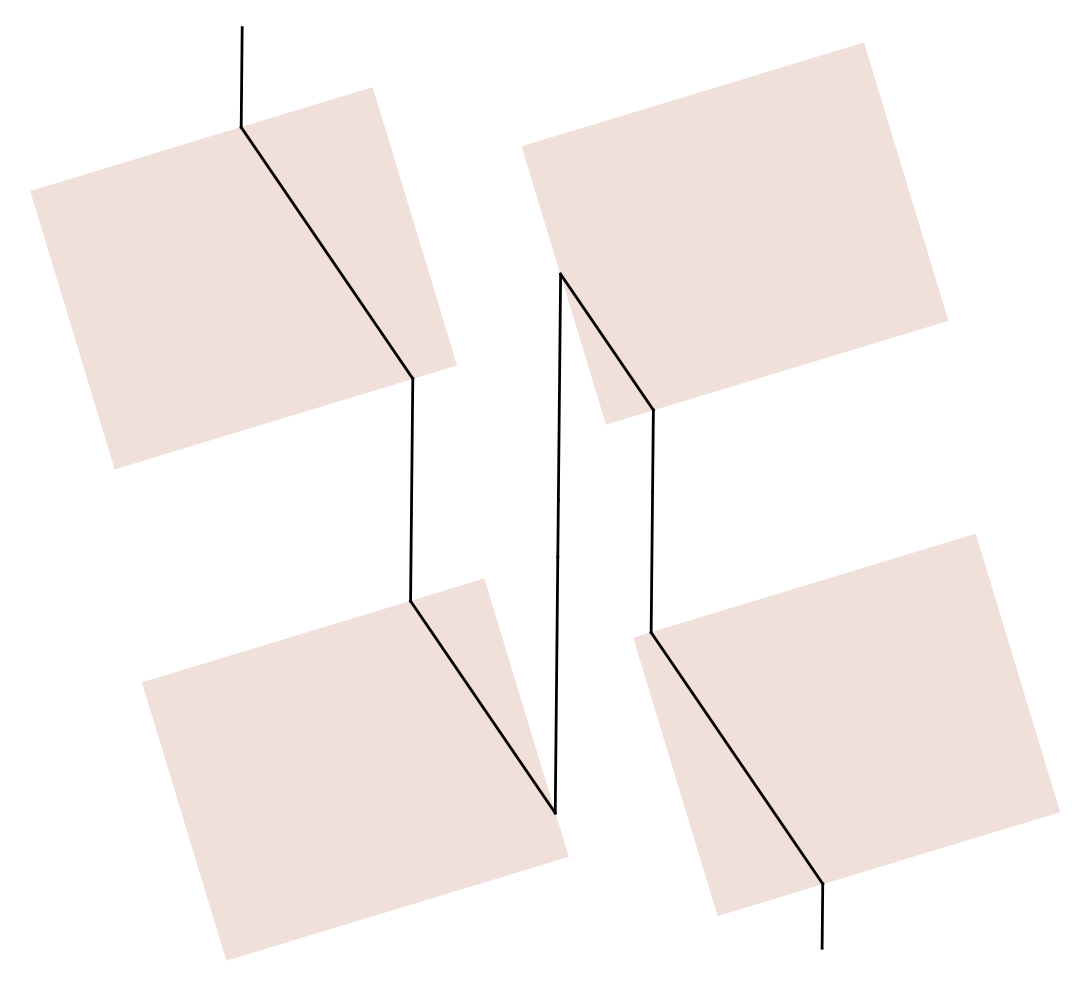}
\caption{A few steps of tiling billiards trajectory}\label{fig:tb}
\label{fig:traj_first_step}
\end{figure}
 
Note that we can always parameterize the system with $\theta\in[0,\frac{\pi}{2})$ and that we can exclude $\theta=0$ because then all trajectories are just vertical lines.
Note also that considering a vertical initial direction is not a restriction since a non-vertical direction, say making an angle $\alpha$ with the vertical, in the system $(\Lambda,a,b,\theta)$ corresponds to a vertical trajectory in the system $(R_{-\alpha}\Lambda,a,b,\theta-\alpha)$ where $R_{-\alpha}\Lambda$ denotes the lattice $\Lambda$ rotated by the angle $-\alpha$.
We prove the following Theorem, illustrated by Figure~\ref{fig:ex_traj_strip}.

    \begin{theorem}\label{thm:tb_in_wt}
        For almost every admissible tuple $(\Lambda,a,b,\theta)$, there exist constants  ${C  > 0}$ and $ \Theta \in [0, \pi) $, such that every vertical trajectory in $W(\Lambda,a,b,\theta)$ is trapped in an infinite band of width $C > 0$ in direction making an angle $\Theta$ with the horizontal. 
    \end{theorem}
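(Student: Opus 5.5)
We propose to follow the strategy of Fr\k{a}czek and Schmoll for Eaton lenses: replace the refraction dynamics by the vertical flow of a quadratic differential on a compact surface, and show that for almost every parameter this flow is trapped. The key observation is local. Crossing a rectangle with negative refractive index reflects the direction across the normal to the crossed side, and the exit point is the mirror image of the entry point with respect to the line through the rectangle's centre perpendicular to that side. Hence inside a rectangle the trajectory behaves like a ray reflected in a ``folded'' copy of the rectangle. Outside the rectangles the direction is always one of finitely many values, obtained from the vertical one by the reflections in the two side directions, which generate a finite group since we only have directions $\pm v$ and their images under reflections in lines at angles $\theta$ and $\theta+\pi/2$.

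We therefore unfold. We glue copies of the complement of the rectangles, one for each element of the finite direction group, along the sides of the rectangles according to the folding rule. Quotienting by $\Lambda$ yields a compact translation surface, or half-translation surface, $X=X(\Lambda,a,b,\theta)$ of finite genus. A vertical trajectory of $W(\Lambda,a,b,\theta)$ lifts to a vertical straight-line flow trajectory on a $\ZZ^2$-cover $\widetilde X\to X$, defined by a cohomology class $\gamma\in H^1(X,\ZZ^2)$ that records the $\Lambda$-displacement. The planar position after time $t$ equals, up to a bounded error, the pairing of $\gamma$ with the cycle obtained by closing the flow segment of length $t$.

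Trapping in a strip of direction $\Theta$ then amounts to finding a single integer class $\xi\in H_1(X,\ZZ)$, namely a primitive combination of the two coordinate components of $\gamma$ determining the normal direction to $\Theta$, whose algebraic intersection with every vertical flow segment is uniformly bounded. We will obtain this from \Cref{thm:bounded_intersection}. The surface $X$ is built from rectangles, so it carries an involutive symmetry, and the component of $\gamma$ that matters lies in a low-dimensional anti-invariant part of homology. For almost every direction, where ``almost every'' corresponds to almost every $(\Lambda,\theta)$ after rotation, the Kontsevich--Zorich cocycle restricted to this part has a nonzero Lyapunov exponent. The Oseledets stable space then intersects the integer lattice, in the spirit of Fr\k{a}czek--Schmoll, giving a class $\xi$ with bounded intersection with all vertical orbits. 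Translating back, $\langle\xi,\cdot\rangle$ is a linear functional on $\Lambda$-displacements whose value stays bounded along every trajectory. This functional defines $\Theta$, and the bound defines $C$.

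We expect the main obstacle to be exactly \Cref{thm:bounded_intersection}. It requires identifying the relevant $\mathrm{SL}_2(\RR)$-invariant locus containing the surfaces $X$ and showing that a full-measure set of admissible tuples gives surfaces that are Birkhoff and Oseledets generic there; Chaika--Eskin type results let us handle this via the rotated-direction parametrisation. It also requires checking that the relevant Lyapunov exponent is nonzero, typically by exhibiting a genus-one quotient of the anti-invariant piece. A secondary technical point is treating trajectories that hit corners, and ensuring the bound is uniform over all starting points rather than almost every one. For uniformity we plan to use that the bounded-intersection estimate for a class in the stable space holds for every orbit of a uniquely ergodic flow, after possibly enlarging $C$.
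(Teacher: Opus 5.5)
Your overall architecture matches the paper: a surface whose vertical flow encodes the dynamics, lattice displacement read off from homology, and a bounded-intersection class taken in the contracting Oseledets direction. But the two concrete steps that carry it would fail as written.

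\textbf{The surface construction.} Your local description of a crossing is wrong.
\begin{itemize}
\item A ray crossing to the opposite side exits at a \emph{translate} of its entry point; the correspondence is order preserving, see \Cref{fig:rect-different-from-eaton}. It is not a mirror image.
\item A ray crossing to an adjacent side is reversed. For small $r$, an entry at distance $r$ from the lowest corner $\mathfrak{c}$ on the side of length $b$ exits at distance $r\tan\theta$ on the side of length $a$. Adjacent sides are perpendicular, so they can never be glued by a map $z\mapsto\pm z+c$, and the correspondence between them is not even an isometry.
\end{itemize}
So gluing copies of the complement of the obstacles along their sides cannot produce a half-translation surface whose vertical geodesics are the trajectories. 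That unfolding is designed for reflection, not for a ray that passes through the obstacle.

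The missing idea is to move the correspondence onto one straight transversal: the diagonal $\mathfrak{d}$ that avoids $\mathfrak{c}$. Extending vertical leaves up to $\mathfrak{d}$ from both sides turns the correspondence into a translation on one subsegment and a half-turn on two others (\Cref{def:surf_S}). This gives a slit torus $X=S/\Lambda\in\mathcal{Q}(-1^2,1^2)$ of genus one. Several things then follow:
\begin{itemize}
\item $H_1(X,\RR)\cong\RR^2$ is itself the displacement space (\Cref{lem:transition}), so there is no anti-invariant piece or quotient to locate.
\item The two Lyapunov exponents on $H_1(X,\RR)$ are opposite by symplecticity and distinct by simplicity of the spectrum (Bell--Delecroix--Gadre--Guti\'errez-Romo--Schleimer), hence nonzero.
\item Above all, \Cref{thm:bounded_intersection} applies. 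It is stated only for Masur and Oseledets generic surfaces of $\mathcal{Q}(-1^2,1^2)$, so with your unspecified unfolding you cannot invoke it.
\end{itemize}

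\textbf{The integer class.} The claim that ``the Oseledets stable space intersects the integer lattice'' is false. The cocycle is integral, so a nonzero integral class keeps its norm bounded below along returns to the compact set $\mathcal{U}$ and can never be exponentially contracted. The stable line therefore contains no nonzero rational class. Take instead a real class $w\in E_1\subset H_1(X,\RR)$ and argue as in \Cref{lem:transition}. The strip is then parallel to $w_1e_1+w_2e_2$, so $\Theta$ is typically irrational with respect to $\Lambda$, which the statement allows.

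\textbf{Genericity.} This step is also incomplete. Chaika--Eskin concerns rotations of one fixed surface and genericity for its orbit-closure measure. Here, changing the direction does not rotate a fixed surface: in the slit model, the splitting points of the slit move with the direction. You would also still need the orbit closure to be the whole stratum. The paper avoids this in \Cref{lem:X_generic}. It thickens the $7$-dimensional family to an open set of the $8$-dimensional stratum by varying $(v_3,v_4)$ with $v_3+v_4$ fixed. This stays inside a stable leaf of $g_t$, so forward Masur and Oseledets genericity passes to almost every admissible tuple.

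\textbf{Uniformity.} Uniformity in the starting point comes from the $p$-independent bounds in \Cref{lem:decomp_traj}. It does not come from unique ergodicity, which only gives $o(T)$ control.
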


\begin{figure}[h!]
\centering
\includegraphics[scale=0.25]{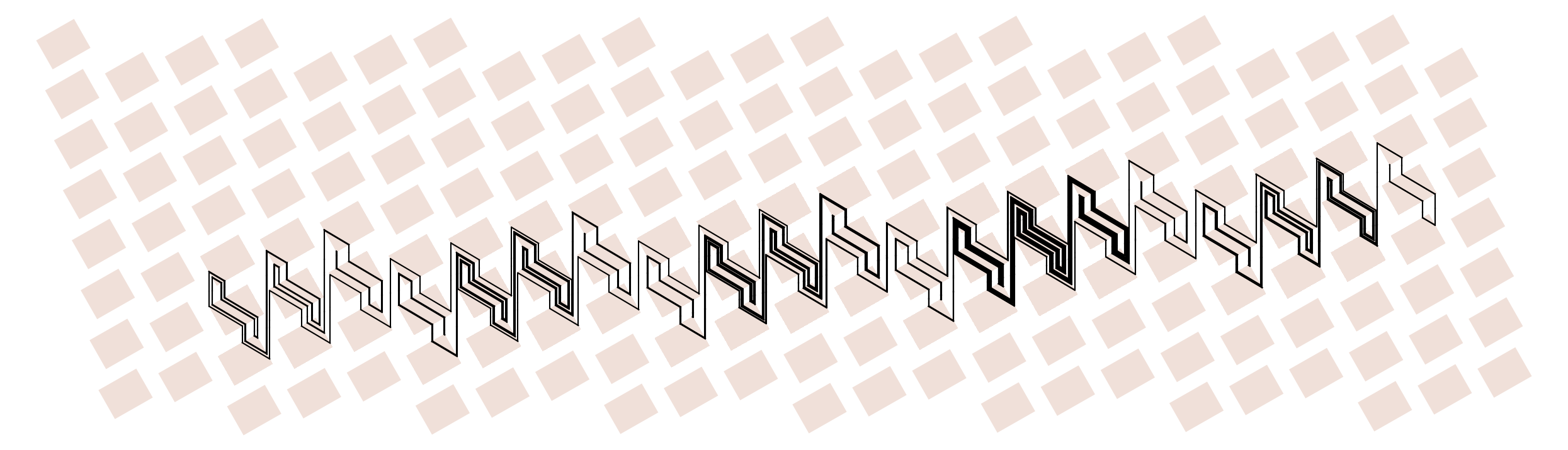}
\caption{Example of a trajectory, trapped in a strip}
\label{fig:ex_traj_strip}
\end{figure}

\subsection{Connections to other work: Eaton lenses}\label{subsec:connections-other-work}
Eaton lenses \cite{eaton} are glass spheres whose optical index depends on the radius in such a way that an incident ray goes out parallel to itself after a half turn, see \Cref{fig:eaton_lens}.
In \cite{FS}, Fr\k{a}czek and Schmoll studied a periodic arrangement of Eaton lenses (see \Cref{fig:eaton_lenses_arrangement}) and showed that almost every light ray trajectory is trapped in a strip.
The rectangle obstacles that we consider here refract the light in such a way that they behave around two of their corners as Eaton lenses do: they reflect the light parallel to itself, see \Cref{fig:rect-similar-to-eaton}. However, they also translate the light when the ray hits some part of the obstacle, see \Cref{fig:rect-different-from-eaton}.
We know from \cite{FS} that the first behavior leads to trajectories trapped in a strip, and a pure translation behavior would also lead to trajectories trapped in a strip. The combination of these two behaviors could \emph{a priori} produce trajectories exploring the entire plane. This is not the case, \Cref{thm:tb_in_wt} states that they generically remain in a strip.
\begin{figure}[h]
    \centering
    \begin{subfigure}[b]{0.4\textwidth}
    \def\r{1} % the raidus
\def\p{0.65} % the x-coordinate of the initial point of the light ray
\begin{tikzpicture}[line cap=round,line join=round,>=stealth,x=1cm,y=1cm]
\clip (-3,-2) rectangle (3,2);
\coordinate (a) at (\p,-{sqrt(\r-\p*\p)}) ;
\coordinate (b) at (-\p,-{sqrt(\r-\p*\p)}) {};
\coordinate (c) at (0,\r/3) {} ;
\coordinate (center) at (0,0) {} ;
\draw[color=colR,opacity=0.5,fill=colR, fill opacity=0.15] (center) circle(\r);

\draw [->] (\p,-\r-0.5) -- (\p,-\r) ;
\draw (\p,-\r-0.1) -- (a) ;
\draw (a) to[out=90, in=0] (c);
\draw (c) to[out=180, in=90] (b);
\draw [->] (b) -- (-\p,-\r-0.1) ;
\draw (-\p,-\r) -- (-\p,-\r-0.5) ;

% construction lines
\begin{footnotesize}
\draw [gray] (center) node {$\times$};
\end{footnotesize}
\draw [gray] (0,-{sqrt(\r-\p*\p)}+0.15) --++ (0.15,0) --++ (0,-0.15); % pernpendicular sign
\draw [densely dotted, gray] (center) -- (0,-{sqrt(\r-\p*\p)});
\draw [ultra thin,gray] (a) -- (b);
\draw [gray] (\p/2-0.05,-{sqrt(\r-\p*\p)}-0.1) --++ (0.1,0.2) ;
\draw [gray] (-\p/2-0.05,-{sqrt(\r-\p*\p)}-0.1) --++ (0.1,0.2) ;
\end{tikzpicture}
    \caption{Refraction through an Eaton lens (the bending is not accurate)}\label{fig:eaton_lens}
    \end{subfigure}
    \hfill
    \begin{subfigure}[b]{0.55\textwidth}
            \def\r{1}
\def\ux{2.5} % x coordinate of the first basis vector of \Lambda
\def\uy{-0.7} % y coordinate of the first basis vector of \Lambda
\def\vx{-1} % x coordinate of the 2nd basis vector of \Lambda
\def\vy{3} % y coordinate of the 2nd basis vector of \Lambda
\begin{tikzpicture}[line cap=round,line join=round,>=stealth,x=0.5cm,y=0.5cm]
%\clip(-1.2,-0.5) rectangle (4.1,4.5);
\foreach \k in {0,1,2}
\foreach \j in {0,1,2}
{\draw[color=colR,opacity=0.5,fill=colR, fill opacity=0.15, shift={(\j*\ux+\k*\vx,\j*\uy+\k*\vy)}] (0,0) circle(\r);
}

\draw[->] (0.8,0.8) -- (0.8,1.2);
\draw (0.8,1.1) -- (0.8,1.59);
\draw[gray,densely dotted] (0.8,1.59) -- (2.2,1.59);
\draw (2.2,1.59) -- (2.2,0.25);
\draw[gray,densely dotted] (2.2,0.25) -- (2.8,0.25);
\draw (2.8,0.25) -- (2.8,3.62);
\draw[gray,densely dotted] (2.8,3.62) -- (3.2,3.62);
\draw (3.2,3.62) -- (3.2,2.2);
\draw[gray,densely dotted] (3.2,2.2) -- (4.8,2.2);
\draw[->] (4.8,2.2) --++ (0,0.7); 
%%\draw (0.8,0.8) -- (0.8,1.59);
\end{tikzpicture}
            \caption{Exemple of a light ray's trajectory in an Eaton lenses arrangement (outside of the lenses)}\label{fig:eaton_lenses_arrangement}
    \end{subfigure}
    \caption{A dynamical system with Eaton lenses}\label{fig:system_eaton}
\end{figure}
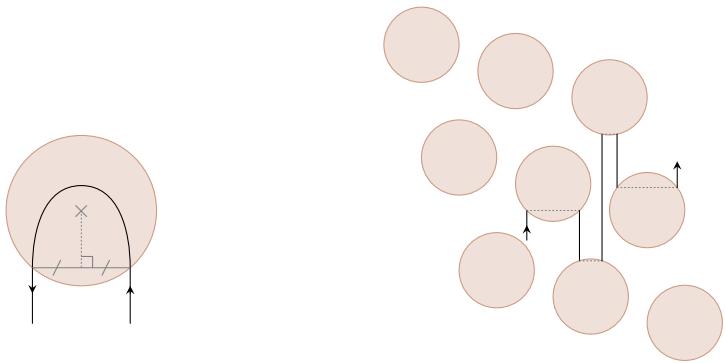
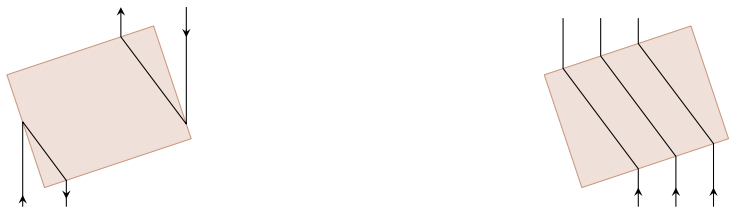
\begin{figure}[h]
    \centering
    \begin{subfigure}[b]{0.45\textwidth}
    {\center
    % just for this figure - traj in black
\definecolor{colright}{rgb}{0,0,0}
\definecolor{colleft}{rgb}{0,0,0}
\def\eps{0.03}
\def\xa{3.9}
\def\ya{1.3}
\def\xb{-1}
\def\yb{3}
\def\xp{-0.72}
\def\xpp{3.62}
\begin{tikzpicture}[line cap=round,line join=round,>=stealth,x=0.5cm,y=0.5cm]
\clip (-5,-0.5) rectangle (4.9,4.8);
\draw[color=colR,opacity=0.5,fill=colR, fill opacity=0.15] (0,0) -- (3.9,1.3) -- (2.9,4.3) -- (-1,3) -- cycle;

% traj left
\draw [colleft,->] (-0.58,-.5) -- (-0.58,-0.2) ;
\draw [colleft] (-0.58,-0.3) -- (-0.58,1.75) -- (0.58,0.19);
\draw [colleft,->] (0.58,0.19)-- (0.58,-0.3) ;
\draw [colleft] (0.58,-0.2)-- (0.58,-0.5);

% traj right
\draw [colright,->] (3.77,4.8) -- (3.77,4);
\draw [colright,->] (3.77,4.2) -- (3.77,1.69) -- (2.03,4.01)-- (2.03,4.8);

\end{tikzpicture}}
    \caption{The behavior similar to the Eaton lens}\label{fig:rect-similar-to-eaton}
    \end{subfigure}
    \begin{subfigure}[b]{0.45\textwidth}
    % just for this figure - traj in black
\definecolor{colmid}{rgb}{0,0,0}
\def\eps{0.03}
\def\xa{3.9}
\def\ya{1.3}
\def\xb{-1}
\def\yb{3}
\def\xp{-0.72}
\def\xpp{3.62}
\begin{tikzpicture}[line cap=round,line join=round,>=stealth,x=0.5cm,y=0.5cm]
\clip (-5,-0.5) rectangle (4.9,4.8);
\draw[color=colR,opacity=0.5,fill=colR, fill opacity=0.15] (0,0) -- (3.9,1.3) -- (2.9,4.3) -- (-1,3) -- cycle;

% traj middle 1
\draw [colmid,->]  (1.5,-0.5) --++ (0,0.5);
\draw [colmid] (1.5,-0.1) -- (1.5,0.5) -- (-0.5,3.17) -- (-0.5,4.5);

% traj middle 2
\draw [colmid,->]  (2.5,-0.5) --++ (0,0.5);
\draw [colmid] (2.5,-0.1) -- (2.5,0.83) -- (0.5,3.5) -- (0.5,4.5);

% traj middle 3
\draw [colmid,->]  (3.5,-0.5) --++ (0,0.5);
\draw [colmid] (3.5,-0.1) -- (3.5,1.17) -- (1.5,3.83) -- (1.5,4.5);

\end{tikzpicture}
    \caption{The additional behavior (translation)}\label{fig:rect-different-from-eaton}
    \end{subfigure}
    \caption{Comparison of the refraction behavior of our rectangle with an Eaton lens}
    \label{fig:comp_rect-eaton}
\end{figure}

\subsection{Strategy}\label{subsec:strategy}
To establish Theorem~\ref{thm:tb_in_wt}, we transform (in \Cref{sec:corresponding_surf_and_question}) our broken-line trajectory in the plane into a geodesic on some half-translation surface $X_{(\Lambda,a,b,\theta)}$ that corresponds to the system $W(\Lambda,a,b,\theta)$. 
This idea was already used by Fr{\k{a}}czek and Schmoll to study light rays in an Eaton lenses array \cite{FS}. One of the main points of this article is to build the surface $X_{(\Lambda,a,b,\theta)}$ corresponding to our case and show that almost every such surface has a generic behavior in the stratum, meaning that it is generic for Oseledets theorem (see \Cref{subsec:Oseledets}) and that its orbit under the Teichmüller flow $g_t$ (see \Cref{subsec:background_teich}) is recurrent in the stratum.  
This surface has four singularities, two of conical angle $\pi$ and two of conical angle $3\pi$. In other words $X_{(\Lambda,a,b,\theta)}$ lies in the stratum $\mathcal{Q}(-1,-1,1,1)$, which we denote by $\mathcal{Q}(-1^2,1^2)$ in the following. Section~\ref{sec:decompo_traj} introduces the tools to prove the following theorem in \Cref{sec:ccl}, where $\gamma_T$ is a cycle corresponding to the trajectory, defined in Section~\ref{subsec:new_question}. The following theorem we will consider Masur and Oseledets generic surfaces (introduced in \Cref{subsec:generic_surf} and \Cref{subsec:Oseledets}, respectively).
\begin{restatable}{theorem}{BoundedIntersectionOnSurf}\label{thm:bounded_intersection}
Let $X\in\mathcal{Q}(-1^2,1^2)$ be a Masur and Oseledets generic surface. There exist a cycle ${w\in H_1(X,\RR)}$ and a constant $C>0$ such that for every $p\in X$, and every $T>0$, the (algebraic) number of intersections between $w$ and $\gamma_T$ is bounded by $C$:
$$
| \iota(w,\gamma_T) | < C.
$$
\end{restatable}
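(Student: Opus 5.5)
We follow the strategy of Fr\k{a}czek and Schmoll \cite{FS}: approximate the cycle $\gamma_T$ by Rauzy--Veech (or Teichmüller) renormalization cycles and use the Lyapunov spectrum. First we pass to the orientation double cover $\widetilde X$ of $X$. It is a translation surface, and $H_1(X,\RR)$ is identified with the anti-invariant part $H_1^-(\widetilde X,\RR)$ of its homology. Since $X\in\mathcal{Q}(-1^2,1^2)$, the double cover has genus $2$. The anti-invariant part $H_1^-$ is then $2$-dimensional; it is the relevant piece for the vertical foliation. The Kontsevich--Zorich cocycle restricted to $H_1^-$ has Lyapunov exponents $\lambda_1^->0>-\lambda_1^-$, and for a Masur generic surface we expect $\lambda_1^-<1$ (Eskin--Kontsevich--Zorich computation for this stratum). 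Oseledets genericity gives a splitting $H_1^-=E^u\oplus E^s$ along the Teichmüller orbit $g_tX$. We take $w$ to be a nonzero vector spanning the stable space $E^s(X)$. Equivalently, via Poincaré duality, we choose $w$ so that $\iota(w,\cdot)$ annihilates the unstable direction.

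Next we decompose $\gamma_T$, following \Cref{sec:decompo_traj}. Masur genericity gives recurrence of $g_tX$ to a compact set $K$ in which the surface admits a good polygonal or zippered-rectangle representation with bounded geometry. At each return time $t_k$ the vertical segment of length $T$ is cut into pieces, each of which is closed up by a bounded segment. The resulting loops are images under the cocycle $A(t_k)$ of a fixed finite family of cycles $\{h_j\}$, plus a bounded error term. This gives
\[
\gamma_T=\sum_{k\le n(T)}\sum_j c_{k,j}\,A(t_k)^{-1}h_j+O(1),
\]
with coefficients $c_{k,j}$ bounded uniformly in $p$ and $T$. Here $n(T)\sim \log T$, and the bounds come from $K$ being compact.

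We then pair each term with $w$. The cocycle preserves $\iota$, so
\[
\iota(w,A(t_k)^{-1}h_j)=\iota(A(t_k)w,h_j).
\]
Since $w\in E^s$, Oseledets gives $\|A(t_k)w\|\le Ce^{-(\lambda_1^--\epsilon)t_k}$. The return times grow linearly, $t_k\ge ck$, which follows from Birkhoff genericity for return to $K$. Hence the sum is dominated by a geometric series and is bounded independently of $T$ and $p$. The error terms are uniformly bounded because $w$ is fixed and the closing segments have bounded length.

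The main obstacle is the uniform decomposition. We must show that the number of pieces at each renormalization level, and the lengths of the closing segments, are bounded uniformly in the starting point $p$, including points near the poles. We must also make sure the exceptional trajectories hitting singularities do not break the estimate. To handle this, we would first use a compact set $K$ on which the return map is given by a fixed Rauzy--Veech matrix with positive entries, as in \cite{FS}. The control of $\gamma_T$ then reduces to the standard Zorich-type estimate on Birkhoff sums of the cocycle.
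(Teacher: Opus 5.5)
Your architecture is the paper's: decompose $\gamma_T$ along the return times $t_k$ (prefix--suffix), take $w$ in the Oseledets stable line, and sum a geometric series. But the one genuinely non-formal input, the spectral one, is set up wrongly and not justified.

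\textbf{The spectral input.} The orientation double cover of a surface in $\mathcal{Q}(-1^2,1^2)$ is branched over the four odd singularities. By Riemann--Hurwitz it therefore has genus $3$, not $2$: it lies in $\mathcal{H}(2,2)$, with the poles becoming regular points.

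More importantly, $H_1(X,\RR)$ is the \emph{invariant} part $H_1^+$ of the homology of the cover, not $H_1^-$. The projection to $X$ kills $H_1^-$, and $H_1^-$ is $\iota$-orthogonal to the transfer of every cycle of $X$. So a $w$ chosen in $H_1^-$ is not a cycle on $X$, and it pairs trivially with the lift of $\gamma_T$. Here $H_1^-$ is four-dimensional and its top exponent is the tautological $\lambda_1^-=1$. So ``we expect $\lambda_1^-<1$'' is false, and an upper bound is beside the point anyway.

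What the argument needs is that the cocycle on the two-dimensional symplectic space $H_1(X,\RR)$ has exponents $\pm\theta$ with $\theta\neq 0$; no cover is needed at all. If $\theta=0$ there is no contracted direction and the argument collapses. You simply assert this. The paper obtains $\theta_1<\theta_2$ from the simplicity of the plus spectrum \cite{bell2021flow}, and then $\theta_1=-\theta_2<0$ by symplecticity. Alternatively, \cite{EKZ} together with \cite{Goujard} gives the value $2/3$.

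\textbf{The coefficient bound.} This second problem is fixable: the coefficients $c_{k,j}$ are not bounded uniformly. A level-$k$ piece runs from $I^{(k)}$ to $I^{(k+1)}$. By \Cref{lem:cte_K}, applied on $g_{t_{k+1}}X$ where vertical lengths are multiplied by $e^{-t_{k+1}}$, its vertical length on $X$ can be of order $e^{t_{k+1}}$. By \Cref{lem:len_zeta}, the level-$k$ basis cycles have vertical length of order $e^{t_k}$. Combined with \Cref{lem:lower_bound_vert_length}, this only gives coefficients of size $O(e^{t_{k+1}-t_k})$. This is what the two vertical-length estimates in the proof of \Cref{lem:decomp_traj} give once $l_{\text{v}}(g_t\gamma)=e^{-t}l_{\text{v}}(\gamma)$ is applied consistently with \Cref{lem:len_zeta}; the exponent printed in that lemma has the opposite sign.

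The gaps $t_{k+1}-t_k$ between returns to a compact set are unbounded in general. In a Zorich-type acceleration, likewise, the one-step matrices are only log-integrable. To sum the series you therefore need, besides $t_k\geq ck$, that $t_{k+1}-t_k=o(k)$. This follows from $t_k/k\to M$ and is exactly the role of the Kac step at the end of the paper's proof.

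With these two corrections your outline becomes the paper's argument: work directly on $H_1(X,\RR)$ with a cited non-vanishing exponent, and allow subexponential growth of the coefficients. Your suggested Rauzy--Veech route for making the decomposition uniform would be a legitimate substitute for the paper's \Cref{lem:cte_K} and \Cref{lem:lower_bound_vert_length}, and for its reduction to $\mathcal{O}_\varepsilon$ followed by spreading with $g_t$. As written, though, it is only indicated.
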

To get the bound, we decompose the cycle $\gamma_T$, as Forni did in~\cite{Forni} and after him Delecroix, Hubert and Lelièvre in~\cite{DHL} and we find a good cycle $w$ with the Oseledets theorem. To adapt the results of \cite{Forni, DHL} to quadratic differentials, we provide a proof using the fact that the points in the stratum are tori. We believe \Cref{thm:bounded_intersection} is in fact true in other strata, but constructing such a proof is an interesting problem outside of the scope of this paper. 
We prove that Theorem~\ref{thm:bounded_intersection} implies Theorem~\ref{thm:tb_in_wt} with Lemma~\ref{lem:transition} in \Cref{subsec:new_question}.

\section{Background and notation} 

\subsection{Translation surfaces} We recall three equivalent definitions for a translation surface:
\begin{enumerate}
\item
It is a finite collection of polygons, with sides identified by translation.
\item
It is a topological orientable surface $M$ of genus $g\geqslant 1$ where
\begin{itemize}
\item there exists a finite set of points $\Sigma =\{ s_1,\dots,s_n\} \subset M$, called the \emph{singularities},
\item there exists a tuple $k=(k_1,\dots,k_n)$ of integers,
\item the surface $M\setminus\Sigma$ is a manifold, and the maximal atlas contains only charts $M\setminus\Sigma \rightarrow \mathbb{C}$ that are translations,
\item for every singularity $s_i$, there exists a neighborhood $\mathcal{U}_i$ and a ramified covering $\pi_i:\mathbb{D} \rightarrow \mathcal{U}_i$ of degree $k_i$ such that the restriction of $\pi_i$ on a sheet is a chart.
\end{itemize}
\item It is a pair $(X,\omega)$ where $X$ is a compact Riemann surface and $\omega$ a holomorphic $1$-form on $X$.
\end{enumerate}
We refer for example to \cite[Section 2.5]{athreya-masur} for a proof of the equivalence of the definitions. For other surveys on the subject, the interested reader can look at \cite{SurveyYoccoz} or \cite{Zorich_flat_surf}. 
A translation is a map $z\mapsto z+c$ with $c\in\mathbb{C}$. 
If we replace the condition "by translation" in the first definition with the condition "by a map of the form $z\mapsto \pm z + c$", then we get a half-translation surface.
It changes in the same way the second point.
In the third point, it is equivalent to replace "holomorphic $1$-form" with "meromorphic quadratic form with simple poles".
Many properties of the translation surfaces hold for half-translation as well.

\subsection{Zippered rectangles}\label{subsec:background_zip-rect}
We first give a definition of interval exchange transformations.
\begin{dfn}[IET] An \emph{interval exchange transformation} (IET) on an interval $I$ is a injection from $I\setminus \Delta$ to $I$ that is a translation on each connected component of $I\setminus \Delta$, where $\Delta$ is a finite set.
\end{dfn}
Roughly speaking, an IET chopes its interval of definition into finitely many intervals, and rearranges them (by translation, without overlapping).
They arise as first return maps of a directional flow on a translation surface to a transversal segment.
The zippered rectangles have been first introduced by Veech \cite{Veech_mv} to build a translation surface $S$ from an IET, that we call $T$, such that $T$ is the first return map of the horizontal flow on $S$ to some transversal interval.
Conversely, from a given translation surface $S$ without either horizontal or vertical saddle connection (that is, a geodesic segment between two singularities), we can build a zippered rectangles representation. We quickly explain how and refer to \cite[Section 5]{SurveyYoccoz} for more detail. 

We pick a singularity of $S$.
We can always pick an outgoing horizontal geodesic segment $I$ from this singularity such that $I$ intersects every vertical leaf. If $S$ has no vertical saddle connection, then we can show that every vertical leaf is dense on $S$ and $I$ can be arbitrarily small.  If $S$ has no horizontal saddle connection, then we can pick $I$ as long as we want, we will never reach a singularity, and taking it long enough ensures to intersect every vertical leaf.
Moreover, we choose $I$ such that its other endpoint is either a singularity or on a singular leaf (ie. a leaf going to or out from a singularity). When we follow the vertical flow on $S$ from a point $x\in I$, it will either reach a singularity (in this case, $x$ is at the "boundary between" two rectangles) or it will eventually come back to $I$. Points come back to $I$ in cylinder, forming the rectangles. See \Cref{fig:zippered_rectangles}. 

\begin{figure}[h!]
\begin{tikzpicture}[line cap=round,line join=round,x=0.5cm,y=0.5cm]
\definecolor{colI}{rgb}{0.7,0,0}
\def \a{2}
\def \b{1}
\def \c{3}
\def \d{4}
\def \ya{4}
\def \yb{-0.5}
\def \yc{1}
\def \yd{-2}
\def \eps{0.05}

\draw [dashed] (0,0)--(\a,\ya)--(\a+\b,\ya+\yb)--(\a+\b+\c,\ya+\yb+\yc)--(\a+\b+\c+\d,\ya+\yb+\yc+\yd)--(\b+\c+\d,\yb+\yc+\yd)--(\c+\d,\yc+\yd)--(\d,\yd)--cycle;

\draw (0,0)--(0,\ya+\yb+\yc+\yd)--(\a+\a*\yb/\ya+\a*\yc/\ya+\a*\yd/\ya,\ya+\yb+\yc+\yd);
\draw (\a,\ya+\yb+\yc+\yd)--(\a,\ya-\yd-\yc)--(\a+\b-\eps,\ya-\yd-\yc)--(\a+\b,\ya+\yb)--(\a+\b+\eps,\ya+\yb-\yd)--(\a+\b+\c-\eps,\ya+\yb-\yd)--(\a+\b+\c,\ya+\yb+\yc)--(\a+\b+\c,\ya+\yb+\yc)--(\a+\b+\c+\d-\eps,\ya+\yb+\yc)--(\a+\b+\c+\d,\ya+\yb+\yc+\yd)--(\a+\b+\c+\d,0);

\draw [dotted] (\a,0)--(\a,\ya+\yb+\yc+\yd);
\draw [dotted] (\a+\b,0)--(\a+\b,\ya+\yb);
\draw [dotted] (\a+\b+\c,0)--(\a+\b+\c,\ya+\yb+\yc);

\draw [dotted] (\d,0)--(\d,\yd);
\draw [dotted] (\d+\c,0)--(\d+\c,\yd+\yc);
\draw [dotted] (\d+\b+\c,0)--(\d+\b+\c,\yd+\yb+\yc);

\fill (0,0) circle (1.5pt);
\fill (\a,\ya) circle (1.5pt);
\fill (\a+\b,\ya+\yb) circle (1.5pt);
\fill (\a+\b+\c,\ya+\yb+\yc) circle (1.5pt);
\fill (\a+\b+\c+\d,\ya+\yb+\yc+\yd) circle (1.5pt);

\draw [colI] (0,0)--(\a+\b+\c+\d,0);
\draw [colI] (\a+\a*\yb/\ya+\a*\yc/\ya+\a*\yd/\ya,\ya+\yb+\yc+\yd)--(\a,\ya+\yb+\yc+\yd);

\begin{footnotesize}
\draw [above left] (\a/2,\ya+\yb+\yc+\yd) node {$A$};
\draw [above] (\a+\b/2,\ya-\yd-\yc) node {$B$};
\draw [above] (\a+\b+\c/2,\ya+\yb-\yd) node {$C$};
\draw [above] (\a+\b+\c+\d/2,\ya+\yb+\yc) node {$D$};

\draw [below right] (\d+\c+\b+\a/2,0) node {$A$};
\draw [below] (\d+\c+\b/2,0) node {$B$};
\draw [below] (\d+\c/2,0) node {$C$};
\draw [below] (\d/2,0) node {$D$};
\draw [colI,above] (\a/2,0) node {$I$};
\end{footnotesize}
\end{tikzpicture}
\caption{Representation of a surface by zippered rectangles}\label{fig:zippered_rectangles}
{\small
The polygon in dashed lines defines the surface. The rectangles are build "above" the cross section $I$ (depicted in deep red). The labels $A$, $B$, $C$, and $D$ show where the rectangles come back to $I$.}
\end{figure}

\subsection{Teichmüller spaces and moduli spaces}\label{subsec:background_teich}

Let $S$ be a topological surface of genus $g\geqslant 1$. The \emph{Teichmüller space of Riemann surfaces} of genus $g$ is
$$
\mathscr{T}_g = \mathscr{T}(S) = \{(X,f) \, | \, X \text{ a Riemann surface}, f: S \rightarrow X \text{ an homeomorphism} \} /\backsim
$$
where two pairs
$(X_1,f_1)$ and $(X_2,f_2)$ are equivalent if there exists a biholomorphic map $h :X_1\rightarrow X_2$ such that $f_2^{-1}\circ h \circ f_1$ is homotopic to the identity.

The \emph{mapping class group} $\Gamma(S)$ is the group of the orientation-preserving diffeomorphisms on $S$ quotiented by the group of those which are homotopic to the identity.
$$
\Gamma(S) = \text{Diff}^+ (S) / \text{Diff}_0^+ (S).
$$

The group $\Gamma(S)$ is discrete and finitely generated. It acts on $\mathscr{T}(S)$ properly discontinuously. The \emph{moduli space of Riemann surfaces} of genus $g$ is the quotient
$$
\mathscr{M}_g = \mathscr{T}_g/\Gamma(S). 
$$

A half-translation surface is a Riemann surface together with a meromorphic quadratic form with simple poles. Thus, 
the Teichmüller and moduli spaces of translation surfaces also contains the data of the quadratic form.
The Teichmüller space $\mathscr{T}_g$ admits a vector bundle, denoted by $\mathscr{T}\Omega_g$, whose fiber at the point $X$ is the set 
$$\Omega(X)=\{(X,\omega) \, | \, \omega \text{ a  meromorphic quadratic form with simple poles defined on } X\}$$
 of pairs of $X$ and a quadratic form $\omega$ defined on $X$.	
The group $\Gamma(S)$ acts on $\mathscr{T}\Omega_g$ as well.
The \emph{Teichmüller space of half-translation surfaces} is $\mathscr{T}\Omega_g$ and the \emph{moduli space of translation surfaces} is the quotient
$$
\Omega_g = \mathscr{T}\Omega_g/\Gamma(S)
$$
which is a vector bundle over $\mathscr{M}_g$ whose fiber in point $X$ is $\Omega(X)$.
In the point of view of the first given definition of half-translation surfaces, 
the moduli space $\Omega_g$ is the set of polygons with identified sides modulo cut and paste operations.

The group $SL_2(\RR)$ acts naturally on the moduli space as follows.
Let $M$ be a half-translation surface given by a polygon $P$ with pairs of sides identified by translation or by $z\mapsto\pm z +c$. 
Let $g\in SL_2(\RR)$. Then, $g.P$ is a polygon and the identifications are still by "half-translation", since $SL_2(\RR)$ preserves the parallelism.
Thus $g.P$ defines a half-translation surface, which is $g.M$.
The \emph{Teichmüller flow} is defined as the restriction of the action of $SL_2(\RR)$ to the diagonal subgroup:
$$ g_t=\left(\begin{array}{cc}
e^t & 0 \\
0 & e^{-t}
\end{array}\right)  .$$
It corresponds to the geodesic flow on the moduli space.

\subsection{The Kontsevich-Zorich cocycle (geometric point of view)}
\label{sec:background_KZ_geometric}

To avoid confusion between the genus and the Teichmüller flow, we now simply write $\mathscr{T}$ and $ \mathscr{M}$ for the Teichmüller space and the moduli space of translation surfaces of genus $g$, $g$ being fixed.
Take a half-translation surface $M$ (of genus $g$), defined by a polygon $P$ with identified sides. Take also a tuple of closed curves  $(\zeta_1,\dots,\zeta_d)$ such that the tuple of the cycles $([\zeta_1],\dots,[\zeta_d])$ forms a basis of the homology $H_1(M,\RR)$. 
Consider the orbit $(g_t. M)_{t\in\RR}$  of $M$ along the Teichmüller flow. For a time $t>0$, the polygon $g_t.P$ that defines the surface $g_t .M$ is stretched in the horizontal direction. We can cut and paste it into another polygon that still defines $g_t.M$. For some\footnote{the time $t$ can be arbitrarily large by ergodicity of the Teichmüller flow in $\mathscr{M}$} well chosen time $t$, we can obtain a polygon whose shape is approximately the one of $P$, which allows to identify $H_1(g_t.M,\RR)$ and $H_1(M,\RR)$.
The Kontsevich--Zorich cocycle expresses the cycles $([g_t.\zeta_1],\dots,[g_t.\zeta_d])$ in the basis $([\zeta_1],\dots,[\zeta_d])$.

We now give a more formal definition, following the nice presentation given in \cite{FSU}.
Let $\pi : \mathscr{T} \rightarrow \mathscr{M}$ and $\Delta\subset\mathscr{T}$ a fundamental domain for the action of $\Gamma$. 
Let $t\in\RR$, $x\in\mathscr{M}$, and $\tilde{x}$ be any point in the fiber $\pi^{-1}(x)$ of $x$. There is an only element  $\psi\in\Gamma$ such that $\psi(g_t . \tilde{x}) \in \Delta $.
The Kontsevich--Zorich cocycle is
$$
A_{KZ}:\begin{array}{ccc}
\RR \times \mathscr{M}(M) &\rightarrow &\GL(H_1(M,\RR) \\
(t,x) &\mapsto& (\psi_{g_t,\tilde{x}})_* 
\end{array}.
$$
 
\subsection{The Oseledets theorem}
\label{subsec:Oseledets}

The Oseledets theorem is a multiplicative version of the ergodic theorem. We consider a dynamical system $(E,\mu,\varphi)$, that is $(E,\mu)$ a probability space and $\varphi:E\rightarrow E$ a $\mu$-invariant transformation, and a cocycle 

\begin{equation}\label{eq:cocycle}
\begin{split}
A: \begin{array}{ccc}
X & \longrightarrow & M_d (\RR) \\
x & \mapsto & A_x
\end{array} .
\end{split}
\end{equation}

We are interested in the growth of vectors under the action of the product of matrices $A_x^{(n)} =  A_{\varphi^{n-1}(x)}...A_{\varphi(x)}A_{x}$.

\begin{thm}[Oseledets]\label{thm:Oseledets}
Let $E$ be a topological space with a probability measure $\mu$. Let $\varphi : E\longrightarrow E$ be an ergodic function.
Let $A$ be a cocycle as in \eqref{eq:cocycle}.
Assume that the cocycle is log-integrable: $\int_E \log |||A_x||| \text{d}\mu < \infty$.

There exist $\theta_1>\theta_2>...>\theta_k$, called \emph{Lyapunov exponents}, such that for almost every $x\in E$, there exists a flag $ \RR^d = \mathcal{H}_k^{(x)} \supset \mathcal{H}_{k-1}^{(x)} \supset ... \supset \mathcal{H}_1^{(x)} $ such that:

$$
A_x \mathcal{H}_j^{(x)} = \mathcal{H}_j^{(\varphi(x))}
$$

and for all $v\in\mathcal{H}_j\setminus\mathcal{H}_{j-1}$, 
$$
 \frac{1}{n}\log || A_x^{(n)} v || \underset{n\rightarrow\infty}{\longrightarrow} \theta_j
$$
where $ A_x^{(n)} = A_{\varphi^{n-1}(x)}...A_{\varphi(x)}A_{x}$.
\end{thm}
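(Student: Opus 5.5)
The plan is to follow Raghunathan's approach: reduce everything to subadditive ergodic averages, and get the flag from the singular value decomposition of $A_x^{(n)}$. Two points about the statement as written need fixing first. Since the flag is increasing, $\mathcal{H}_1^{(x)}$ must carry the \emph{slowest} growth rate, so I will index the exponents so that the rate on $\mathcal{H}_j\setminus\mathcal{H}_{j-1}$ increases with $j$. The statement itself cannot be read literally: if $\theta_1>\dots>\theta_k$ with $\theta_j$ attached to $\mathcal{H}_j\setminus\mathcal{H}_{j-1}$, then the whole space would grow at the smallest rate, which is false for a generic vector. For the equivariance $A_x\mathcal{H}_j^{(x)}=\mathcal{H}_j^{(\varphi(x))}$ I will take $A_x$ invertible. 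This holds for the Kontsevich--Zorich cocycle, which is symplectic on absolute homology. I will also use integrability of $\log^+|||A_x^{-1}|||$; otherwise one only gets the inclusion $\subseteq$.

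\textbf{Step 1: the exponents via exterior powers.} For $1\le p\le d$, set $a_n^{(p)}(x)=\log|||\wedge^p A_x^{(n)}|||$. By the cocycle relation $A_x^{(n+m)}=A_{\varphi^n x}^{(m)}A_x^{(n)}$, this sequence is subadditive. It is integrable by the log-integrability hypothesis, since $|||\wedge^pA|||\le|||A|||^p$. Kingman's subadditive ergodic theorem then gives almost sure constant limits $\frac1n a_n^{(p)}\to\Lambda_p$. Because $|||\wedge^pA|||$ is the product of the top $p$ singular values, setting $\lambda_p=\Lambda_p-\Lambda_{p-1}$ gives the almost sure limits of $\frac1n\log s_p(A_x^{(n)})$, where $s_1\ge\dots\ge s_d$ are the singular values. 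The distinct values among the $\lambda_p$ are the Lyapunov exponents.

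\textbf{Step 2: convergence of singular directions.} Let $U_n^{(p)}(x)$ be the span of the right singular vectors of $A_x^{(n)}$ associated with the $d-p$ smallest singular values. Fix a gap $\lambda_p>\lambda_{p+1}$. Using $A_x^{(n+1)}=A_{\varphi^n x}A_x^{(n)}$, one estimates the angle between $U_n^{(p)}$ and $U_{n+1}^{(p)}$ by
\[
\frac{|||A_{\varphi^nx}|||\,s_{p+1}(A_x^{(n)})}{s_p(A_x^{(n)})},
\]
up to constants. The ratio $s_{p+1}/s_p$ decays like $e^{-n(\lambda_p-\lambda_{p+1})}$, and by the ergodic theorem (Borel--Cantelli) $\frac1n\log|||A_{\varphi^n x}|||\to0$ almost surely. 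So the sequence $U_n^{(p)}$ is Cauchy in the Grassmannian with exponentially small increments, and it converges to a limit. I define $\mathcal{H}_j^{(x)}$ as the limit of the slow singular subspaces associated with the $j$ smallest distinct exponents. Summing the same angle estimates over $m\ge n$ shows that the convergence is exponentially fast.

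\textbf{Step 3: growth rates and invariance.} For $v\in\mathcal{H}_j\setminus\mathcal{H}_{j-1}$, decompose $v$ along the singular basis of $A_x^{(n)}$. The component along the directions with exponent at least $\theta_j$ is bounded below, because $v\notin\mathcal{H}_{j-1}$ and the limit exists. The components along faster directions are exponentially small, at a rate beating their extra growth; this is the exponential-convergence estimate from Step 2. Together these give $\frac1n\log\|A_x^{(n)}v\|\to\theta_j$. Equivariance then follows from this characterisation: $\mathcal{H}_j^{(x)}$ is exactly the set of vectors whose growth rate is at most the $j$-th exponent. Since $A_x^{(n+1)}=A^{(n)}_{\varphi x}A_x$, the map $A_x$ sends $\mathcal{H}_j^{(x)}$ into $\mathcal{H}_j^{(\varphi x)}$, and equality holds by invertibility and dimension count.

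\textbf{Main obstacle.} I expect the main difficulty to be Step 2, specifically controlling the faster components in Step 3. The angle estimate has to be made uniform enough to sum, while $|||A_{\varphi^n x}|||$ is only subexponential, not bounded. My first attempt is to fix $\varepsilon$ smaller than a quarter of the smallest gap and, for almost every $x$, choose $N(x)$ beyond which all the singular value ratios and the norms $|||A_{\varphi^nx}|||$ lie within $e^{\pm n\varepsilon}$ of their asymptotic values. Then all the estimates become explicit geometric series.
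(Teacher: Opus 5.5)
The paper gives no proof of this statement. It is quoted as the classical Oseledets theorem and used later only with $d=2$, so there is nothing to compare against, and I assess your proposal on its own terms. Your two corrections are right. As printed, the indexing makes generic vectors (those in $\mathcal{H}_k\setminus\mathcal{H}_{k-1}$) grow at the smallest rate, and equality in the equivariance needs invertibility. Steps 1 and 2 are the standard Raghunathan argument and are sound. In the one-step angle bound the denominator should be $s_p(A_x^{(n+1)})$ rather than $s_p(A_x^{(n)})$, but the two differ by a subexponential factor, so nothing changes.

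\textbf{The gap: the upper bound in Step 3.} Use your convention $\theta_1<\dots<\theta_k$ and let $v\in\mathcal{H}_j^{(x)}$.
\begin{itemize}
\item Step 2 only tells you that the component of $v$ orthogonal to the time-$n$ slow space (exponents $\le\theta_j$) is $O(e^{-n(\theta_{j+1}-\theta_j-\varepsilon)})$. It decays at the rate of the \emph{adjacent} gap.
\item A component along a singular direction with exponent $\theta_\ell$, $\ell\ge j+2$, is multiplied by $e^{n\theta_\ell}$. So its contribution to $\|A_x^{(n)}v\|$ is only bounded by $e^{n(\theta_\ell-\theta_{j+1}+\theta_j+\varepsilon)}$, which is exponentially larger than $e^{n\theta_j}$.
\item The other gaps do not help. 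Since $v\in\mathcal{H}_{\ell-1}^{(x)}$ too, the gap $(\theta_{\ell-1},\theta_\ell)$ gives decay $\theta_\ell-\theta_{\ell-1}$. The Step 2 estimates give at best the largest single consecutive gap between $\theta_j$ and $\theta_\ell$, never the sum $\theta_\ell-\theta_j$ that is needed.
\end{itemize}
Hence, once there are three distinct exponents, $\limsup_n\frac1n\log\|A_x^{(n)}v\|\le\theta_j$ is unproved for $j\le k-2$. So is the growth-rate characterisation of $\mathcal{H}_j^{(x)}$ from which you derive equivariance. Taking $\varepsilon$ small and summing geometric series, as your last paragraph proposes, does not fix this.

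\textbf{What is missing.} You need the per-direction estimate $|\langle v,e_i^{(n)}\rangle|\le C_\varepsilon\,e^{-n(\lambda_i-\theta_j-\varepsilon)}$ for every right singular vector $e_i^{(n)}$ of $A_x^{(n)}$ whose exponent $\lambda_i$ exceeds $\theta_j$. In Raghunathan's and Ruelle's proofs this is a separate lemma. Write $E^a_m$ for the span of the right singular vectors of $A_x^{(m)}$ with exponent $\theta_a$. One first proves one-step bounds $\|\Pi_{E^a_{m+1}}\Pi_{E^c_m}\|\lesssim e^{-m(|\theta_a-\theta_c|-\varepsilon)}$ for \emph{all} pairs $a\neq c$.
\begin{itemize}
\item For $\theta_a>\theta_c$ this is your Step 2 computation with the block-$a$ singular value of $A_x^{(m+1)}$ in the denominator.
\item For $\theta_a<\theta_c$ it follows, under your assumptions, from $\|A_x^{(m)}w\|\le|||A_{\varphi^m x}^{-1}|||\,\|A_x^{(m+1)}w\|$. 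This is where the integrability of $\log^+|||A^{-1}|||$ genuinely enters.
\end{itemize}
One then propagates these bounds to all times $m\ge n$ through intermediate blocks. This uses $|\theta_a-\theta_d|+|\theta_d-\theta_c|\ge|\theta_a-\theta_c|$, with the constants and $\varepsilon$-losses allowed to depend on the number of intermediate blocks.

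Alternatively, induct on the number of exponents. At the top gap your argument is complete, since there is only one faster block. So $\mathcal{H}_{k-1}^{(x)}$ is exactly the set of vectors growing at rate $<\theta_k$, and is therefore invariant. Restrict the cocycle to this subbundle in measurable orthonormal frames (norms are unchanged), check that the restricted exponents are $\theta_1,\dots,\theta_{k-1}$ with their multiplicities, and repeat.

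In the only case the paper uses, $d=2$ with two distinct exponents, there is a single gap and your argument goes through as written.
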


The set of surfaces for which the conclusion of \Cref{thm:Oseledets} holds are called \emph{Oseledets generic} surfaces. In this article, we apply the Oseledets theorem in $E=\mathcal{Q}(-1^2,1^2)$ the stratum of half-translation surfaces with four singularities (two with conical angle $\pi$ and two with conical angle $2\pi$). It is endowed with the Masur-Veech measure $\mu$, which is ergodic for the Teichmüller flow \cite{Masur_mv,Veech_mv}.
We will apply the theorem (in \Cref{subsec:ccl_Oseledets}) to a discrete version of the Kontsevich-Zorich cocycle (introduced in \Cref{sec:background_KZ_geometric}), which is log-integrable \cite{Forni}.

\subsection{Notation}
In \Cref{tab:notation} we provide a table of notations for the convenience of the reader.

\begin{table}[h!]
    \centering
    \begin{tabular}{lrl}
\textbf{Setting} & $\Lambda$& lattice  \\
       & $a,b$ & size of the rectangular obstacle \\
        &$\theta$ & inclination of the obstacle \\
\textbf{Surfaces} &        $S$& infinite half-translation surface associated to $(\Lambda,a,b,\theta)$\\
   (see Sections     &$X=S/\Lambda$& finite half-translation surface associated to $(\Lambda,a,b,\theta)$\\
    \ref{subsec:equiv-traj}, \ref{subsec:finite_HTS} and \ref{subsec:generic_surf})      &$\mathcal{I} \subset \RR^2$& set of periodic slits without identification \\
        &$\mathcal{P}  \subset \RR^2 $& set of periodic parallelogram-shape cuts without identification\\
\textbf{Projections} & $\pi:$ & $S\rightarrow X$ \\
      (see Sections      &$\pi_S:$ & $\RR^2\setminus\mathcal{P}\rightarrow S$\\
    \ref{subsec:equiv-traj} and \ref{subsec:finite_HTS})      &$\pi_X:$ & $\RR^2\setminus\mathcal{P}\rightarrow X$\\
\textbf{Set of surfaces}
    &$\mathcal{A} $&  $=\{X_{(\Lambda,a,b,\theta)} \, | \, (\Lambda,a,b,\theta) \text{ admissible parameters} \}$\\
  (see Section \ref{subsec:generic_surf})    &$\mathcal{X} $& $=\{ X_{(\Lambda,a,b,\theta)}  \, | \, (\Lambda,a,b,\theta) \text{ possibly non admissible parameters} \}$\\
      &\multirow{2}{5em}{$\mathcal{S}=\mathcal{S}_1\sqcup\mathcal{S}_2 $}& set of surfaces that are quotient of $\RR^2$ by $\Lambda$ with\\
      &&parallelogram-shape cuts with identification \\
\textbf{Basis}        &$(e_1,e_2)$& basis of $\Lambda$ \\
    (see Section \ref{subsec:new_question})       &  $(\xi_1,\xi_2)$& closed geodesic curves on $X$ induced by $(e_1,e_2)$,  form a basis of $\pi_1(X)$\\
       & $(\zeta_1,\zeta_2)$& homology classes of $(\xi_1,\xi_2)$, form a basis of $H_1(X,\RR)$\\
\textbf{Flows}    &    $\Psi$ & flow of the wind-tree tiling billiards \\
        &$\Phi$ & vertical flow on $X$\\
\textbf{Curves}&        $\varphi_s^t(x)$ & part of the vertical foliation of $x$ between $\Phi_s(x)$ and $\Phi_t(x)$ (on $X$) \\
   (see Section \ref{subsec:new_question})        &$\gamma_s^t(x)$ & closed curve obtained by closing $\varphi_s^t(x)$ without crossing $\xi_1$ nor $\xi_2$ \\ 
        &$\gamma_t(x)$ & $=\gamma_0^t(x)$\\
\textbf{Other objects}   &     $\mathcal{U}$ & small neighborhood of $X$ in $\mathcal{Q}(-1^2,1^2)$  \\
  &    \multirow{2}{0.25cm}{$I$} & segment on $X$, transverse to the vertical flow,\\
  &&on which we build a zippered rectangles representation \\
    \end{tabular}
    \caption{Notations}
    \label{tab:notation}
\end{table}

\section{The corresponding surface} \label{sec:corresponding_surf_and_question}
With each wind-tree tiling billiards with parameters $(\Lambda, a, b, \theta)$, we associate a (finite) half-translation surface in the stratum $\mathcal{Q}(-1^2,1^2)$ such that the geodesic flow corresponds to the wind-tree tiling billiards flow. We first build an infinite half-translation surface related to our system in \Cref{subsec:infinite_HTS}. In \Cref{subsec:equiv-traj}, we explain how both flows relate to each other. In \Cref{subsec:finite_HTS}, we quotient the infinite surface to get a finite surface.
The set $\mathcal{A}$ of surfaces that we obtain is an open set 
of a 7 real dimensional subset $\mathcal{X}$ of the stratum $\mathcal{Q}(-1^2,1^2)$ (which has 8 real dimensions). However, in \Cref{subsec:generic_surf} we show that almost all surfaces in $\mathcal{X}$ are generic in $\mathcal{Q}(-1^2,1^2)$. Finally, in \Cref{subsec:new_question}, we transpose the question of being trapped in a strip in the plane to a question of algebraic intersection numbers in the associated surface $X\in\mathcal{Q}(-1^2,1^2)$.

\subsection{From foliations to an appropriate slit surface}\label{subsec:infinite_HTS}
We first give the idea behind the construction. Start with a fixed rectangular obstacle $\mathcal{R} \subseteq \mathbb{R}^2$ with side lengths $a,b$ and angle $\theta$ from the horizontal. Consider a trajectory moving in the vertical direction. There are two ways for a trajectory to pass through $\mathcal{R}$: either it goes from one side to the opposite side, or it goes from one side to an adjacent side (and in this case, its direction is reversed), as seen on the left of Figure~\ref{fig:ways_to_cross}. If a trajectory hits a corner of $\mathcal{R}$ there is no unique choice of path, so we call this a singular leaf. 

\begin{figure}[h!]
\begin{center}
\begin{subfigure}[b]{0.44\textwidth}
\def\colsg{black}
\def\eps{0.03}
\def\xa{3.9}
\def\ya{1.3}
\def\xb{-1}
\def\yb{3}
\def\xp{-0.72}
\def\xpp{3.62}
\begin{tikzpicture}[line cap=round,line join=round,>=stealth,x=1.2cm,y=1.2cm]
\clip(-1.2,-0.5) rectangle (4.1,4.5);
\draw[color=colR,opacity=0.5,fill=colR,fill opacity=0.15] (0,0) -- (3.9,1.3) -- (2.9,4.3) -- (-1,3) -- cycle;

% feuilles singulières
\draw [densely dotted,\colsg] (-1,3) -- (-1,4.93);
\draw [densely dotted,\colsg] (-1,-0.98) -- (-1,3) -- (1,0.33) -- (1,-0.98);
\draw [densely dotted,\colsg] (3.9,1.3) -- (3.9,4.93);
\draw [densely dotted,\colsg] (3.9,-0.98) -- (3.9,1.3)-- (1.9,3.97) -- (1.9,4.93);

% traj middle 1
\draw  [thick,dashed,colmid] (1.54,-0.5) -- (1.54,0.51) -- (-0.46,3.18) -- (-0.46,4.6);
\draw  [thick,colmid,->] (1.54,-0.8) -- (1.54,-0.9);
\draw  [thick,colmid,->] (-0.46,4) -- (-0.46,4.1);

% traj middle 2
\draw [thick,dashed,colmid] (1.13,4.76)-- (1.13,3.71) -- (3.13,1.04)-- (3.13,-1.56);
\draw  [thick,colmid,->] (3.13,-0.8) -- (3.13,-0.9);
\draw  [thick,colmid,->] (1.13,4) -- (1.13,4.1);

% traj left
\draw [thick,dotted,colleft] (-0.58,-0.5)-- (-0.58,1.75) -- (0.58,0.19)-- (0.58,-0.5);
\draw [thick,colleft,->,>=latex] (-0.58,0.8)-- (-0.58,0.9);
\draw [thick,colleft,->,>=latex] (0.58,0)-- (0.58,-0.1);

% traj right
\draw [thick,dash pattern=on 10pt off 3pt on 1pt off 2pt on 1pt off 2pt on 1pt off 3pt,colright] (3.77,4.5) -- (3.77,1.69) -- (2.03,4.01)-- (2.03,4.84);
\draw [thick,colright,->,>=Straight Barb] (3.77,3.5) -- (3.77,3.4);
\draw [thick,colright,->,>=Straight Barb] (2.03,4.2) -- (2.03,4.3);

\end{tikzpicture}
    \caption{Different ways to go through the \mbox{rectangular} obstacle $\mathcal{R}$}
\end{subfigure}
\hfill
\begin{subfigure}[b]{0.44\textwidth}
\def\colsg{black}
\def\eps{0.06}
\def\xa{3.9}
\def\ya{1.3}
\def\xb{-1}
\def\yb{3}
\def\xp{-0.72}
\def\xpp{3.62}
\begin{tikzpicture}[line cap=round,line join=round,>=stealth,x=1.2cm,y=1.2cm]
\clip(-1.2,-0.5) rectangle (4.1,4.5);
\draw[color=colR,opacity=0.5,fill=colR,fill opacity=0.15] (0,0) -- (3.9,1.3) -- (2.9,4.3) -- (-1,3) -- cycle;

% feuilles singulières
\draw [densely dotted,\colsg] (-1,-0.5) -- (-1,4.8);
\draw [densely dotted,\colsg]  (1,0.33) -- (1,-0.5);
\draw [densely dotted,\colsg] (1,2.30612-\eps) -- (1,0.33);
\draw [densely dotted,\colsg] (3.9,-0.5) -- (3.9,4.8);
\draw [densely dotted,\colsg] (1.9,1.99388+\eps) -- (1.9,4.8); 

% slit
\draw [line cap=rect,line join=rect,color=colleft,shift={(0,-\eps)}, line width= 2pt] (-1,3) -- (1,2.30612);
\draw [line width=0.75pt] (0,2.65-\eps-0.05)--(0,2.65-\eps+0.03);
\draw [line cap=rect,line join=rect,color=colright,shift={(0,+\eps)}, line width= 2pt] (1.9,1.99388) -- (3.9,1.3);
\draw [line width=0.75pt] (2.9,1.65+\eps-0.03)--(2.9,1.65+\eps+0.05);
\draw [line cap=rect,line join=rect,color=colmid,shift={(0,+\eps)}, line width= 2pt] (-1,3) -- (1.9,1.99388);
\draw [line cap=rect,line join=rect,color=colmid,shift={(0,-\eps)}, line width= 2pt] (1,2.30612) -- (3.9,1.3);

% traj left
\draw [thick,dotted,colleft,->,>=latex] (-0.58,-0.5)-- (-0.58,0.95);
\draw [thick,dotted,colleft] (-0.58,0.9)-- (-0.58,2.825);
\draw [thick,dotted,colleft,->,>=latex] (0.58,2.4)-- (0.58,0.75);
\draw [thick,dotted,colleft] (0.58,1)-- (0.58,-0.5);

% traj middle 1
\draw  [thick,dashed,colmid,->] (1.54,-0.5) -- (1.54,0.9);
\draw  [thick,dashed,colmid] (1.54,0.8) -- (1.54,2.05);
\draw  [thick,dashed,colmid,->] (-0.46,2.85) -- (-0.46,3.5);
\draw  [thick,dashed,colmid] (-0.46,3.4) -- (-0.46,4.5);

% traj middle 2
\draw [thick,dashed,colmid,->]  (3.13,-0.5)--(3.13,0.9);
\draw [thick,dashed,colmid]  (3.13,0.8)--(3.13,1.45);
\draw [thick,dashed,colmid,->] (1.13,2.3) -- (1.13,3.5);
\draw [thick,dashed,colmid] (1.13,3.4) -- (1.13,4.5);

% traj right 
\draw [thick,dash pattern=on 10pt off 3pt on 1pt off 2pt on 1pt off 2pt on 1pt off 3pt,colright,->,>=Straight Barb] (3.77,4.8) -- (3.77,3.4);
\draw [thick,dash pattern=on 10pt off 3pt on 1pt off 2pt on 1pt off 2pt on 1pt off 3pt,colright] (3.77,3.5) -- (3.77,1.375);

\draw [thick,dash pattern=on 10pt off 3pt on 1pt off 2pt on 1pt off 2pt on 1pt off 3pt,colright,->,>=Straight Barb] (2.03,2)-- (2.03,3.5);
\draw [thick,dash pattern=on 10pt off 3pt on 1pt off 2pt on 1pt off 2pt on 1pt off 3pt,colright] (2.03,3.4)-- (2.03,4.8);
\end{tikzpicture}
    \caption{Trajectories when the obstacle $\mathcal{R}$ is \mbox{replaced} by a slit}
\end{subfigure}
\caption{
Replacing the obstacle by a well-chosen slit preserves the trajectory outside of $\mathcal{R}$.
}
\label{fig:ways_to_cross}
\end{center}

{\small
The orange dashed lines are trajectories translated when crossing $\mathcal{R}$, and the (deep and light) blue dotted lines are reversed trajectories around a corner. 
The thin black dotted lines correspond to the singular leaves of the foliation.}
\end{figure}
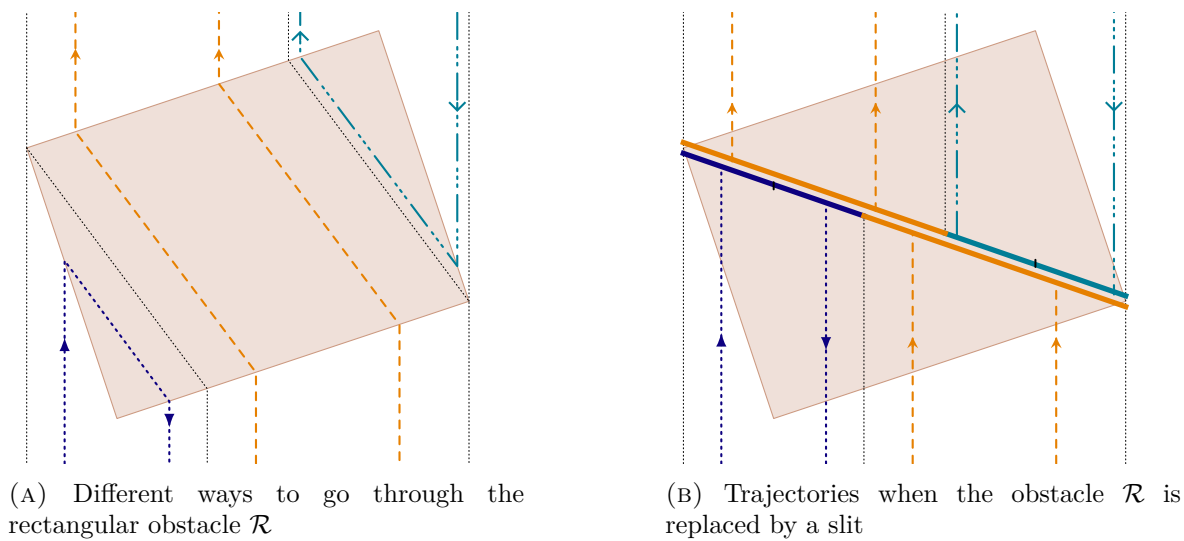 

Since any vertical trajectory has the same three options when crossing an obstacle, we will replace the rectangle by a slit with identifications so that the trajectory outside of $\mathcal{R}$ remains the same as shown on the right in Figure~\ref{fig:ways_to_cross}. To do this, we first let $\mathfrak{c}$ denote the corner of $\mathcal{R}$ with the smallest $y$-coordinate. Such a choice of $\mathfrak{c}$ is unique since we excluded the case where $\theta = 0$ with horizontal and vertical sides. Now consider $\mathfrak{d}$ to be the diagonal of the rectangle that does not contain $\mathfrak{c}$. Note that $\mathfrak{d}$ cannot be vertical (otherwise it should contain the lowest corner of the rectangle $\mathfrak{c}$). In other words, the diagonal $\mathfrak{d}$ is transverse to the vertical direction, so we can encode the way the trajectory crosses the obstacle on this diagonal. This diagonal with side identifications is what we call a slit. Constructing the same slit each copy of $\mathcal{R}$ on the lattice $\Lambda$ defines an infinite half-translation surface. In Section~\ref{subsec:finite_HTS} we give a compact quotient, but we spend the remainder of this section giving exact computations and a proof that the trajectories in the plane with the obstacles and in the infinite half-translation surface are equivalent.

In this new setting, the trajectories of our problem become geodesics in an infinite half-translation surface. The surface in question is defined by Definition~\ref{def:surf_S} and Lemma~\ref{lem:equivalent_traj} states that the geodesics in the new surface are \emph{equivalent} (see Definition~\ref{def:equiv_traj}) to the trajectories of the system $W(\Lambda,a,b,\theta)$.

\begin{figure}[h!]
\begin{center}
\begin{subfigure}[b]{0.475\textwidth}
\centering
\definecolor{colangle}{rgb}{0.7,0,0}
\definecolor{coldel}{rgb}{0,0,0}
\def\eps{0.05}%1pt
\def\xa{3.9}
\def\ya{1.3}
\def\xb{-1}
\def\yb{3}
\def\xp{-0.72}
\def\xpp{3.62}
\def\S{-20.86} %approx sloap of the diagonal (in degrees)
\begin{tikzpicture}[>=stealth,x=.750cm,y=.750cm]
\clip(\xb-4,-1.2) rectangle (\xa+0.5,4.5);
\draw[color=colR,opacity=0.5,fill=colR,fill opacity=0.15] (0,0) -- (3.9,1.3) -- (2.9,4.3) -- (-1,3) -- cycle;

% feuilles singulières
\draw [densely dotted,coldel] (\xb,-0.15) -- (-1,\yb);
\draw [densely dotted,coldel] (\xa,-0.15) -- (3.9,\ya);

% delim horizontales
\draw [densely dotted,coldel] (\xb-0.5,3) -- (\xb,3);
\draw [densely dotted,coldel] (\xb-0.5,2.30612-\eps) -- (1,2.30612-\eps);
\draw [densely dotted,coldel] (\xb-0.5,1.3) -- (3.9,1.3);

\begin{footnotesize}
\draw [color=colangle,fill=colangle,fill opacity=0.1] (0,0) -- (0.4,0) arc (0:18.43:0.4) -- cycle;
\draw [colangle,right] (0.4,0) node {$\theta$};
\draw [<->,shift={(0.06*\xb,0.06*\yb)}] (\xb,\yb)--(\xb+\xa,\yb+\ya);
\draw [above=4] (\xa/2+\xb,\ya/2+\yb) node {$a$};
\draw [<->,shift={(0.05*\xa,0.05*\ya)}] (\xa,\ya)--(\xb+\xa,\yb+\ya);
\draw [right=4] (\xa+\xb/2,\ya+\yb/2) node {$b$};
\draw [<->,shift={(0,-0.2)}] (\xb,0)--(0,0);
\draw [below=4] (\xb/2,0) node {$b\sin\theta$};
\draw [<->,shift={(0,-0.2)}] (0,0)--(\xa,0);
\draw [below=4] (\xa/2,0) node {$a\cos\theta$};

\draw [<->,shift={(-0.6,-\eps)}] (\xb,\ya)--(\xb,2.30612);
\draw (\xb-2.4,2.6-\eps) node {$\frac{2b\sin\theta}{\tan(\theta+\arctan(a/b))}$};
\draw [<->,shift={(-0.6,-\eps)}] (\xb,2.30612)--(\xb,\yb);
\draw (\xb-2.4,1.7-\eps) node {$\frac{a\cos\theta-b\sin\theta}{\tan(\theta+\arctan(a/b))}$};

\draw [<->,shift={(-0.03*\ya+0.03*\yb,0.03*\xa-0.03*\xb)}] 
(-1,3) -- (1.9,1.99388); %(\xa,\ya) ++ (2*\xb, 2*\xb*tan{\S}) -- (\xb,\yb) ;
\draw [shift={(0,0.36)}] (\xb+\xb/2+\xa/2,\xb*tan{\S}+\yb/2+\ya/2)node {$x$};

\draw [<->,shift={(-0.03*\ya+0.03*\yb,0.03*\xa-0.03*\xb)}] (1.9,1.99388) -- (3.9,1.3); 
\draw [shift={(0,0.36)}]   (\xa+\xb,\ya+\xb*tan{\S}) node {$y$};

\draw [color=colangle,fill=colangle,fill opacity=0.1,shift={(0,0)}] (\xb,\yb) --++ (0,-0.4) arc (-90:-20.86:0.4) -- cycle;
\draw [colangle,below=1.5, right=2] (\xb,\yb-0.4) node {$\eta$};
\end{footnotesize}

% slit
\draw [color=colleft,shift={(0,-\eps)}, line width= 1pt] (-1,3) -- (1,2.30612);
\draw [line width=0.75pt] (0,2.65-\eps-0.05)--(0,2.65-\eps+0.03);
\draw [color=colright,shift={(0,+\eps)}, line width= 1pt] (1.9,1.99388) -- (3.9,1.3);
\draw [line width=0.75pt] (2.9,1.65+\eps-0.03)--(2.9,1.65+\eps+0.05);
\draw [color=colmid,shift={(0,+\eps)}, line width= 1pt] (-1,3) -- (1.9,1.99388);
\draw [color=colmid,shift={(0,-\eps)}, line width= 1pt] (1,2.30612) -- (3.9,1.3);
\end{tikzpicture}
\caption{Case when $b\sin\theta<a\cos\theta$}\label{fig:rect_to_slit_first_case}
\end{subfigure}
\hfill
\begin{subfigure}[b]{0.475\textwidth}
\centering
\definecolor{colangle}{rgb}{0.7,0,0}
\definecolor{coldel}{rgb}{0,0,0}
\def\eps{0.05}
\def\xa{0.5}
\def\ya{3}
\def\xb{-6}
\def\yb{1}
\def\xp{-0.72}
\def\xpp{3.62}
\def\T{80.538} %theta is approx. equal to 80.538 degrees
\def\E{107.103} %eta is approx equal to 107.103
\def\S{17.103} %approx. sloape of the diagonal
\begin{tikzpicture}[>=stealth,x=.750cm,y=.750cm]
\clip(\xb-0.5,-0.65) rectangle (\xa+4,4.5);
\draw[color=colR,opacity=0.5,fill=colR,fill opacity=0.15] (0,0) --++ (\xa,\ya) --++ (\xb,\yb) --++ (-\xa,-\ya) -- cycle;

% feuilles singulières
\draw [densely dotted,coldel] (\xb,-0.15) -- (\xb,\yb);
\draw [densely dotted,coldel] (\xa,-0.15) -- (\xa,\ya);

% delim horizontales
\draw [densely dotted,coldel] (\xa,\ya) --++ (0.5,0);
\draw [densely dotted,coldel]  (-\xa,\ya-2*\xa*tan{\S}-\eps) -- (\xa+0.5,\ya-2*\xa*tan{\S}-\eps);
\draw [densely dotted,coldel] (\xb,\yb-\eps) -- (\xa+0.5,\yb-\eps);

\begin{footnotesize}
\draw [color=colangle,fill=colangle,fill opacity=0.1] (0,0) -- (0.4,0) arc (0:\T:0.4) -- cycle;
\draw [colangle,above right] (0.4,0) node {$\theta$};

\draw [<->,shift={(0.03*\xb,0.03*\yb)}] (\xb,\yb)--(\xb+\xa,\yb+\ya);
\draw [left=4] (\xa/2+\xb,\ya/2+\yb) node {$a$};
\draw [<->,shift={(0.08*\xa,0.08*\ya)}] (\xa,\ya)--(\xb+\xa,\yb+\ya);
\draw [above=4] (\xa+\xb/2,\ya+\yb/2) node {$b$};
\draw [<->,shift={(0,-0.2)}] (\xb,0)--(0,0);
\draw [below=4] (\xb/2,0) node {$b\sin\theta$};
\draw [<->,shift={(0,-0.2)}] (0,0)--(\xa,0);
\draw [below=4] (\xa/2,0) node {$a\cos\theta$};

\draw [<->,shift={(0.6,0)}] (\xa,\ya)--(\xa,\ya-2*\xa*tan{\S}-\eps);
\draw [shift={(2.35,0)}]  (\xa,\ya-\xa*tan{\S}-\eps) node {$\frac{2a\cos\theta}{\tan(\theta+\arctan(a/b))}$};
\draw [<->,shift={(0.6,0)}] (\xa,\ya-2*\xa*tan{\S}-\eps)--(\xa,\yb);
\draw [shift={(2.35,-0.2)}]  (\xa,\yb/2+\ya/2-\xa*tan{\S}) node {$\frac{b\sin\theta-a\cos\theta}{\tan(\theta+\arctan(a/b))}$};

\draw [<->,shift={(-0.025*\ya+0.025*\yb,0.025*\xa-0.025*\xb)}] (\xb,\yb) --++ (2*\xa, 2*\xa*tan{\S}) ;
\draw [shift={(0,0.36)}]  (\xb+\xa,\yb+\xa*tan{\S}) node {$y$};

\draw [<->,shift={(-0.025*\ya+0.025*\yb,0.025*\xa-0.025*\xb)}] (\xa,\ya) --++ (\xa+\xb, -tan{\S}*5.5);
\draw [shift={(0,0.36)}]  (\xa+\xa/2+\xb/2,\ya-5.5*tan{\S}/2) node {$x$};

\draw [color=colangle,fill=colangle,fill opacity=0.1] (\xb,\yb) --++ (0,-0.4) arc (-90:\E-90:0.4) -- cycle;
\draw [colangle,below right] (\xb,\yb-0.4) node {$\eta$};
\end{footnotesize}

% slit
\draw [color=colleft,shift={(0,+\eps)}, line width= 1pt] (\xb,\yb) --++ (2*\xa, 2*\xa*tan{\S});
\draw [line width=0.75pt] (\xb,\yb) ++ (\xa,\xa*tan{\S}) --++ (0,0.1);
\draw [color=colright,shift={(0,-\eps)}, line width= 1pt] (\xb,\yb) ++ (-\xb-\xa, tan{\S}*5.5) -- (\xa,\ya);
\draw [line width=0.75pt] (\xa,\ya) ++ (-\xa,-\xa*tan{\S}) --++ (0,-0.1);

\draw [color=colmid,shift={(0,-\eps)}, line width= 1pt] (\xb,\yb) --++ (-\xb-\xa, tan{\S}*5.5); 
\draw [color=colmid,shift={(0,+\eps)}, line width= 1pt] 
(\xa,\ya) --++ (\xb+\xa, -tan{\S}*5.5);

\end{tikzpicture}
\caption{Case when $b\sin\theta>a\cos\theta$}\label{fig:rect_to_slit_second_case}
\end{subfigure}
\caption{From a rectangular obstacle to a slit with identifications }
\label{fig:rect_to_slit}
\end{center}
\end{figure}
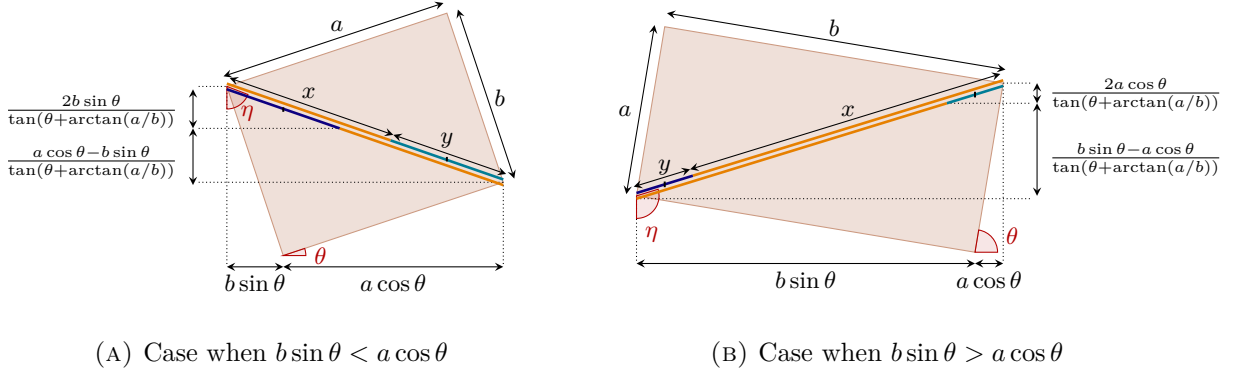

\begin{dfn}\label{def:surf_S}
    Let $S_{(\Lambda, a, b, \theta)}$ denote the infinite half-translation surface defined as follows. Centered on each lattice points $\Lambda$ in the plane $\RR^2$, there is a slit making an angle $\eta=\theta+\arctan(a/b)$ with the {\textbf{vertical}}, and of length $\sqrt{a^2+b^2}$. 
    
    The side identifications for the slit are given by one of the three cases.
    \begin{enumerate}
        \item\label{item:def_S_case1} If $b\sin(\theta) < a\cos(\theta)$, then the top left (of size $x$) of the slit is identified by translation with the bottom right of the slit, the top right (of size $y$) of the slit is identified to itself with a rotation, and so is the bottom left of the slit see \Cref{fig:rect_to_slit_first_case}, where the lengths are \begin{align*}
            x&=(a\cos\theta-b\sin\theta) \sqrt{1 + \left(\frac{1}{\tan(\theta+\arctan(a/b))}\right)^2}\\
            y&=(2b\sin\theta) \sqrt{1 + \left(\frac{1}{\tan(\theta+\arctan(a/b))}\right)^2}.
        \end{align*}
        \item\label{item:def_S_case2}  If $b\sin(\theta) > a\cos(\theta)$, then the top right (of size $x$) of the slit is identified by translation with the bottom left of the slit, the top left (of size $y$)  of the slit is identified to itself with a rotation, and so is the bottom right of the slit see \Cref{fig:rect_to_slit_second_case}, where the lengths are \begin{align*}
            x&=(b\sin\theta-a\cos\theta) \sqrt{1 + \left(\frac{1}{\tan(\theta+\arctan(a/b))}\right)^2}\\
            y&=(2a\cos\theta) \sqrt{1 + \left(\frac{1}{\tan(\theta+\arctan(a/b))}\right)^2}.
        \end{align*}
        \item\label{item:def_S_case3}  If $a\cos(\theta) = b\sin(\theta)$, then the top of the slit is identified to itself with a rotation, and so is the bottom of the slit (there is no part identified by translation).
    \end{enumerate}
\end{dfn}

Case (\ref{item:def_S_case1}) (resp. (\ref{item:def_S_case2})) holds when the singular leaves go through the interior of a side of length $a$ (resp. $b$).
Moreover, when case (\ref{item:def_S_case1}) (resp. (\ref{item:def_S_case2})) holds, every trajectory entering the obstacle by a side of length $b$ (resp. $a$) goes out through an adjacent side.
The special case when the slit is only identified by translation (and hence $S_{(\Lambda,a,b,\theta)}$ is a flat plane, without conical singularity) corresponds to the excluded case when $\theta=0$. 

Finally, let us highlight that the map $(\Lambda,a,b,\theta)\mapsto S_{(\Lambda,a,b,\theta)}$ is a bijection from the set of (non necessarily admissible) parameters $(\Lambda,a,b,\theta)$ to the set of infinite half-translation surfaces obtained as a plane with periodical arrangement of same slits, as in \Cref{fig:infinite_surf}.
\begin{figure}[h!]
\centering
\def\xk{1.2}
\def\yk{0.5}
\def\xl{-0.1}
\def\yl{1}
\def\lx{0.6}
\def\ly{0.4}
\begin{tikzpicture}[line cap=round,line join=round,>=stealth,x=1.0cm,y=2.0cm,rotate=-25]

\draw [gray] (0.9*\xk+0.25*\xl,0.9*\yk+0.25*\yl)--(0.9*\xk+1.25*\xl,0.9*\yk+1.25*\yl)--(1.9*\xk+1.25*\xl,1.9*\yk+1.25*\yl)--(1.9*\xk+0.25*\xl,1.9*\yk+0.25*\yl)--cycle;

\draw [dotted] (\xk*3.25+0.5*\xl,3.25*\yk+0.5*\yl) -- (3.75*\xk+0.5*\xl,3.75*\yk+0.5*\yl); 
\draw [dotted] (-\xk+\xl,-\yk+\yl) -- (-0.5*\xk+\xl,-0.5*\yk+\yl); 
\draw [dotted] (1.5*\xk+2.75*\xl,1.5*\yk+2.75*\yl) -- (1.5*\xk+2.25*\xl,1.5*\yk+2.25*\yl); 
\draw [dotted] (1.5*\xk-1.25*\xl,1.5*\yk-1.25*\yl) -- (1.5*\xk-0.75*\xl,1.5*\yk-0.75*\yl); 

\foreach \k in {0,1,2}%,3,4,5,6,7,8,9}
\foreach \l in {0,1,2}%,3,4,5,6,7,8,9}
{
% top
\draw [line cap=rect,line join=rect,colmid, line width=1pt] (\xk*\k+\xl*\l,\yk*\k+\yl*\l) -- (\xk*\k+\xl*\l+\lx,\yk*\k+\yl*\l); 
\draw [line cap=rect,line join=rect,colright, line width=1pt] (\xk*\k+\xl*\l+\lx,\yk*\k+\yl*\l) -- (\xk*\k+\xl*\l+1,\yk*\k+\yl*\l); 
\draw [line width=0.75pt] (\xk*\k+\xl*\l+\lx+\ly/2,\yk*\k+\yl*\l-0.02) -- (\xk*\k+\xl*\l+\lx+\ly/2,\yk*\k+\yl*\l+0.02);

% bot
\draw [line cap=rect,line join=rect,colleft, line width=1pt] (\xk*\k+\xl*\l,\yk*\k+\yl*\l-0.05) -- (\xk*\k+\xl*\l+\ly,\yk*\k+\yl*\l-0.05);
\draw [line width=0.75pt] (\xk*\k+\xl*\l+\ly/2,\yk*\k+\yl*\l-0.03) -- (\xk*\k+\xl*\l+\ly/2,\yk*\k+\yl*\l-0.07);
\draw [line cap=rect,line join=rect,colmid, line width=1pt] (\xk*\k+\xl*\l+\ly,\yk*\k+\yl*\l-0.05) -- (\xk*\k+\xl*\l+1,\yk*\k+\yl*\l-0.05);
}
\end{tikzpicture}
\caption{The surface $S_{(\Lambda, a, b, \theta)}$}
\label{fig:infinite_surf}
\small
A fundamental domain of $\Lambda$ is drawn in gray (the vectors generating its two sides are in $\Lambda$).
\end{figure}
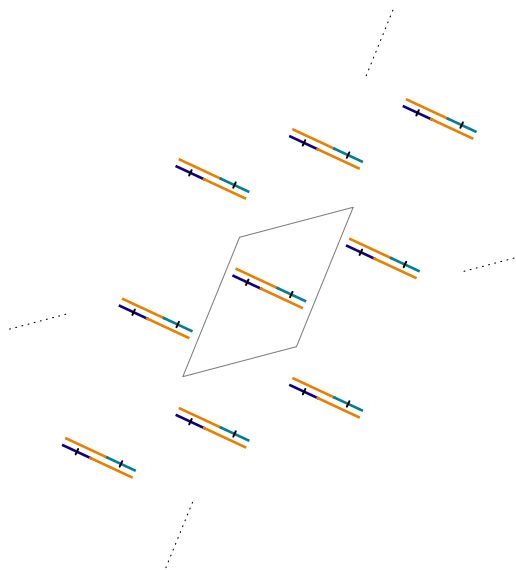
We note that the slits could not fit inside a fundamental domain of $\Lambda$. This will not be a problem.

In the following of this article, we can and will ignore case (\ref{item:def_S_case3}) as it is a special case: it has two "fake" singularities of conical angle $2\pi$ per fundamental domain, and only two "true" singularities (which have conical angle $3\pi$), whereas the surfaces in Case (\ref{item:def_S_case1}) and (\ref{item:def_S_case2}) have four "true" singularities per fundamental domain, as states the following lemma.
\begin{lem}
\label{lem:singularities_S} 
The surface
$S_{(\Lambda, a, b, \theta)}$ is $\Lambda$-periodic and has four singularities per fundamental domain of $\Lambda$: two of conical angle $\pi$ and two of conical angle $3\pi$.
\end{lem}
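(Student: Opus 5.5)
Periodicity is immediate from the construction. By Definition~\ref{def:surf_S}, $S_{(\Lambda,a,b,\theta)}$ is obtained from $\RR^2$ by placing the same slit, with the same side identifications, at every point of $\Lambda$. Translation by any $\lambda\in\Lambda$ maps the set of slits to itself and respects the gluings, so it induces an isometry of the half-translation structure. The singularities can only sit at points on the slits where the gluing pattern changes. Away from those points the surface is locally either an untouched piece of plane or a neighbourhood of an interior point of a glued segment. For a segment glued by translation (the middle part of size $x$), such a neighbourhood is flat. For a half of a side folded onto itself by a rotation $z\mapsto -z+c$, the neighbourhood of an interior point, other than the fixed point of the rotation, is also flat. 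So it suffices to analyse one slit, and I treat case (\ref{item:def_S_case1}); case (\ref{item:def_S_case2}) is symmetric.

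\textbf{Candidate points.} Label the slit endpoints $P$ (left) and $Q$ (right). On the top side, the rotation-glued part is the right segment of length $y$; on the bottom side, it is the left segment of length $y$. The candidate singular points are:
\begin{itemize}
\item the two endpoints $P$ and $Q$;
\item the midpoints $m_{\mathrm{top}}$ and $m_{\mathrm{bot}}$ of the two self-glued segments, which are the fixed points of the rotations $z\mapsto -z+c$;
\item the top breakpoint $R_{\mathrm{top}}$, at distance $x$ from $P$, and the bottom breakpoint $R_{\mathrm{bot}}$, at distance $y$ from $P$.
\end{itemize}

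\textbf{Angle computations.} At $m_{\mathrm{top}}$, a neighbourhood is a half-disc whose boundary diameter is folded in half. This gives a cone of angle $\pi$, and the same holds for $m_{\mathrm{bot}}$.

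For the other points, I will follow the identifications around each point and add up the sectors. The top-left segment $[P,R_{\mathrm{top}}]$ is glued by translation to the bottom-right segment $[R_{\mathrm{bot}},Q]$. This translation sends $P\mapsto R_{\mathrm{bot}}$ and $R_{\mathrm{top}}\mapsto Q$. The rotation on the top-right segment swaps $R_{\mathrm{top}}$ and $Q$, and the rotation on the bottom-left segment swaps $P$ and $R_{\mathrm{bot}}$. Hence $\{P,R_{\mathrm{bot}}\}$ becomes a single point, and $\{Q,R_{\mathrm{top}}\}$ becomes a single point.

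Now add the sectors:
\begin{itemize}
\item $P$ is the endpoint of the slit, so the full $2\pi$ of plane around it lies in a neighbourhood of $P$.
\item $R_{\mathrm{bot}}$ is an interior point of the bottom side, so it contributes a half-disc of angle $\pi$ below the slit.
\item The total angle at $\{P,R_{\mathrm{bot}}\}$ is therefore $3\pi$, and likewise at $\{Q,R_{\mathrm{top}}\}$.
\end{itemize}

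\textbf{Checks.} The total angle excess per fundamental domain is $2(-\pi)+2(\pi)=0$, which is consistent with a flat torus quotient; this is Gauss--Bonnet. I will also verify that following a small circle around $\{P,R_{\mathrm{bot}}\}$ indeed passes through the sectors in the order claimed, and closes up after angle $3\pi$ rather than splitting into two separate cones. This is the only delicate step. I would check it by explicitly tracking a small loop across each glued edge, using the slope $\eta$ to orient the sectors.

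Finally, no slit endpoint or singular point lies on another slit, because the slits are disjoint as long as they are disjoint from their translates. Even when the slits do not fit inside a fundamental domain, periodicity then gives exactly these four points per fundamental domain.
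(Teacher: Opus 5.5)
Your proof is correct and takes the same route as the paper. The paper reads off from its figure of the slit that $\{P,R_{\mathrm{bot}}\}$ and $\{Q,R_{\mathrm{top}}\}$ are the two $3\pi$ points and the rotation fixed points are the two $\pi$ points, then uses periodicity exactly as you do; the step you flag as delicate already follows from what you wrote, since the two boundary rays of the $2\pi$-sector at $P$ are glued (one by the translation, one by the rotation) to the two boundary rays of the $\pi$-sector at $R_{\mathrm{bot}}$, so these sectors close up into a single cone of angle $3\pi$, and likewise at $Q$.
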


\begin{proof}
All the singularities of $S$ come from the identifications in the slit.
\begin{figure}[h!]
\centering
\begin{subfigure}[b]{0.45\textwidth}
\centering
\definecolor{colsa}{rgb}{0.8,0.2,0} % for disk sing
\definecolor{colsb}{rgb}{0.6,0,0.6} % for circle sing

\definecolor{colsc}{rgb}{0.4,0.6,0} % for filled rect sing
\definecolor{colsd}{rgb}{0.3,0.3,0.3} % for rect sing

\def\xk{1.2}
\def\yk{0.5}
\def\xl{-0.1}
\def\yl{1}

\def\lx{2.1}
\def\ly{0.8}
\begin{tikzpicture}[line cap=round,line join=round,>=stealth,x=2.0cm,y=2.0cm]
% top
\draw [->,colright, line width=1pt,shift={(\lx,0)}] (0,0) -- (\ly/3,0);
\draw [colright, line width=1pt,shift={(\lx,0)}] (\ly/4,0) -- (3*\ly/4,0);
\draw [<-,colright, line width=1pt,shift={(\lx,0)}] (2*\ly/3,0) -- (\ly,0); 
\draw [->>,colmid, line width=1pt] (0,0) -- (\lx/2,0); 
\draw [colmid, line width=1pt] (\lx/3,0) -- (\lx,0);

% bot
\draw [->>,colmid, line width=1pt,shift={(\ly,0)}] (0,-0.1) -- (\lx/2,-0.1);
\draw [colmid, line width=1pt,shift={(\ly,0)}] (\lx/3,-0.1) -- (\lx,-0.1);
\draw [->,>=latex,colleft, line width=1pt] (0,-0.1) -- (\ly/3,-0.1);
\draw [colleft, line width=1pt] (\ly/4,-0.1) -- (3*\ly/4,-0.1);
\draw [<-,>=latex,colleft, line width=1pt] (2*\ly/3,-0.1) -- (\ly,-0.1);

% sing
\fill [colsa] (\lx,0) circle (0.04);
\fill [colsa] (\lx+\ly,0) circle (0.04);
\fill [colsa] (\lx+\ly,-0.1) circle (0.04);

\draw [thick,colsb] (0,0) circle (0.04);
\draw [thick,colsb] (0,-0.1)  circle (0.04);
\draw [thick,colsb] (\ly,-0.1)  circle (0.04);

\fill [colsc, shift={(\lx+\ly/2,0)}] (-0.035,-0.035) rectangle (0.035,0.035) ;
\draw [colsd, thick, shift={(\ly/2,-0.1)}] (-0.035,-0.035) rectangle (0.035,0.035) ;
\end{tikzpicture}
\caption{If $a\cos\theta>b\sin\theta $}
\end{subfigure}
\begin{subfigure}[b]{0.45\textwidth}
\centering
\definecolor{colsa}{rgb}{0.8,0.2,0} % for disk sing
\definecolor{colsb}{rgb}{0.6,0,0.6} % for circle sing

\definecolor{colsc}{rgb}{0.4,0.6,0} % for filled rect sing
\definecolor{colsd}{rgb}{0.3,0.3,0.3} % for rect sing

\def\xk{1.2}
\def\yk{0.5}
\def\xl{-0.1}
\def\yl{1}

\def\lx{2.1}
\def\ly{0.8}
\begin{tikzpicture}[line cap=round,line join=round,>=stealth,x=2.0cm,y=2.0cm]
% top
\draw [->,colright, line width=1pt] (0,0) -- (\ly/3,0);
\draw [colright, line width=1pt] (\ly/4,0) -- (3*\ly/4,0);
\draw [<-,colright, line width=1pt] (2*\ly/3,0) -- (\ly,0); 
\draw [->>,colmid, line width=1pt] (\ly,0) -- (\ly+\lx/2,0); 
\draw [colmid, line width=1pt] (\ly+\lx/3,0) -- (\ly+\lx,0);

% bot
\draw [->>,colmid, line width=1pt] (0,-0.1) -- (\lx/2,-0.1);
\draw [colmid, line width=1pt] (\lx/3,-0.1) -- (\lx,-0.1);
\draw [->,>=latex,colleft, line width=1pt] (\lx,-0.1) -- (\lx+\ly/3,-0.1);
\draw [colleft, line width=1pt] (\lx+\ly/4,-0.1) -- (\lx+3*\ly/4,-0.1);
\draw [<-,>=latex,colleft, line width=1pt] (\lx+2*\ly/3,-0.1) -- (\lx+\ly,-0.1);

% sing
\fill [colsa] (0,0) circle (0.04);
\fill [colsa] (\ly,0) circle (0.04);
\fill [colsa] (0,-0.1) circle (0.04);

\draw [thick,colsb] (\lx+\ly,0) circle (0.04);
\draw [thick,colsb] (\lx,-0.1) circle (0.04);
\draw [thick,colsb] (\lx+\ly,-0.1) circle (0.04);

\fill [colsc, shift={(\ly/2,0) }, thick] (-0.035,-0.035) rectangle (0.035,0.035) ;

\draw [colsd, shift={(\lx+\ly/2,-0.1)}, thick]  (-0.035,-0.035) rectangle (0.035,0.035) ;
\end{tikzpicture}
\caption{If $a\cos\theta<b\sin\theta $}
\end{subfigure}
\caption{Singularities on the slit. The ones labeled with squares have conical angle $\pi$, the ones with circles or disks have conical angle $3\pi$}\label{fig:sing_slit}
\end{figure}
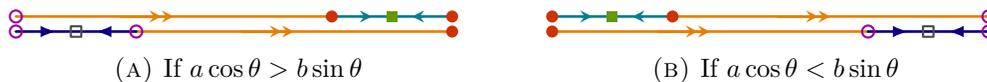
One can check on \Cref{fig:sing_slit} that $S$ has four singularities per slit, hence per fundamental domain that includes the slit. This holds as well for any fundamental domain because any fundamental domain contains the same number of singularities.
Squares correspond to singularities with conical angle $\pi$ and circles (or disks) to singularities with angle $3\pi$.
\end{proof}

\subsection{Equivalent trajectories}\label{subsec:equiv-traj}
We denote by $\mathcal{I}\subset\RR^2$ the set of slits without identifications.
Note that the projection $\pi_S : \RR^2\setminus\mathcal{I} \longrightarrow S_{(\Lambda, a, b, \theta)}$ is well-defined. 

\begin{dfn}\label{def:equiv_traj}
Let $\tilde{\varphi}\subset\RR^2$ and $\varphi\subset S_{(\Lambda, a, b, \theta)}$ be two curves. Let $\tilde{F}\subset \RR^2$ such that $\mathcal{I} \subset \tilde{F}$ and $F = \overline{\pi_S\left(\tilde F\setminus\mathcal{I}\right)}\subset S_{(\Lambda, a, b, \theta)}$.
We say that $\tilde{\varphi}$ and $\varphi$ coincide outside of $\tilde F$ if the images of the curves outside $\tilde{F}$ and $F$ are respectively the same: 
$$\pi_S\left(\tilde{\varphi}\cap\left(\RR^2\setminus \tilde{F}\right)\right) =\varphi\cap\left( S_{(\Lambda, a, b, \theta)}\setminus F \right).$$
\end{dfn}

We have defined $S_{(\Lambda, a, b, \theta)}$ so that each vertical trajectory in the wind-tree model coincide with a vertical geodesic on $S_{(\Lambda, a, b, \theta)}$, outside the set of obstacles.
This is what establishes the following lemma. 

\begin{lem}
\label{lem:equivalent_traj}
Let $\tilde{R}\subset\RR^2$ be the set of rectangular obstacles in the plane.
For every $\tilde p\in\RR^2\setminus \tilde{R}$, the vertical trajectory $\tilde{\varphi}$ starting at point $\tilde{p}$ in the wind-tree model of admissible parameters $(\Lambda, a, b, \theta)$ coincides outside of $\tilde R$ with the vertical trajectory starting at point $p = \pi_S(\tilde p)$ in $S_{(\Lambda, a, b, \theta)}$.

\end{lem}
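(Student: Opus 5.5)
The plan is to reduce to a single obstacle and then argue by induction along the trajectory. Outside $\tilde R$ both the wind-tree tiling billiards trajectory and the geodesic on $S_{(\Lambda,a,b,\theta)}$ are vertical straight segments. The slit of the obstacle $\mathcal R$ is its diagonal $\mathfrak d$, which lies inside $\mathcal R$. Hence the two trajectories agree until they first meet $\mathcal R$ (equivalently, its slit), and it suffices to prove the following local statement. Suppose a vertical segment enters $\mathcal R$ at a point $q$ of $\partial\mathcal R$. Then the refracted path leaves $\mathcal R$ at a point $q'$ in some vertical direction, and the geodesic on $S$ starting from the vertical line through $q$ reaches the slit, is glued by the identification, and exits the region $\pi_S(\mathcal R\setminus\mathcal I)$ at $\pi_S(q')$ with the same vertical direction. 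Since the trajectory then continues vertically outside $\tilde R$ until the next obstacle, induction over successive obstacles (finitely many in any bounded time, as the rectangles are disjoint and $\Lambda$-periodic) gives the lemma. Corner hits on one side correspond exactly to hitting the singular endpoints of the identified pieces on the other, so stopped trajectories match as well.

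For the local statement I would place $\mathfrak c$ at the origin, as in \Cref{fig:rect_to_slit}, and treat case (\ref{item:def_S_case1}) of \Cref{def:surf_S}; case (\ref{item:def_S_case2}) is symmetric by exchanging the roles of $a$ and $b$. First I compute the refraction inside $\mathcal R$. With negative index $-1$, a ray crossing a side is reflected across the normal of that side and continues inside. A vertical ray entering through a side of length $a$ (slope $\theta$) therefore travels in direction $-\mathrm{dir}$ reflected across that side's normal, i.e. in the direction making angle $2\theta$ with the vertical. On exit it is refracted again by the reflection rule for the exit side. Exiting through the opposite side, which is parallel, gives an upward vertical ray, and the net effect is a translation. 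Exiting through an adjacent side, which is perpendicular, gives the composite of two reflections across orthogonal lines, i.e. a rotation by $\pi$, so the ray leaves downward. For each entry abscissa $s$ I get an explicit exit abscissa $s'$: in the translation regime $s'=s-c$ for a constant $c$ (the horizontal shift), and in the reversal regime $s'$ is the reflection of $s$ about the abscissa of the relevant corner's singular leaf.

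Next I compute the slit side. A vertical line at abscissa $s$ hits $\mathfrak d$ at a point $d(s)$. I parametrize $\mathfrak d$ by abscissa and check three things. In the middle range, the translation identification of the pieces of length $x$ sends the bottom point at abscissa $s$ to the top point at abscissa $s-c$; this reduces to verifying that the horizontal projection of $x$ equals $a\cos\theta-b\sin\theta$ and that the offset of the two pieces equals $c$, which follows from the stated lengths and the slope $\eta$. On the end pieces of length $y$, the self-identification by rotation maps abscissa $s$ to its mirror about the midpoint of the piece. That midpoint lies on the singular leaf through $\mathfrak c$, respectively the opposite corner, and the direction is reversed. Finally, the vertical segments between $\partial\mathcal R$ and $\mathfrak d$ are exactly the parts of the geodesic lying in $\tilde F=\tilde R$, so they do not matter for coincidence outside $\tilde R$. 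The threshold abscissas, namely the singular leaves through the left and right corners, are the same in both pictures by construction.

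I expect the main obstacle to be this bookkeeping of the exit point: checking that the refraction map $s\mapsto s'$ agrees exactly with the slit gluing, including the orientation and the upward or downward choice on each piece, and that the pieces join at the correct abscissas. To keep it manageable I would work entirely with horizontal projections, so each map is an affine map of $\RR$ with slope $\pm1$. Then matching one point and the slope on each of the three intervals suffices, and the lengths $x$, $y$ in \Cref{def:surf_S} are just these horizontal lengths multiplied by $\sqrt{1+\cot^2\eta}$.
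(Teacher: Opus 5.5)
Your proposal is correct and follows the same route as the paper. Both arguments use the same three steps: the trajectories agree up to the first entry point, the exit points agree by construction of the slit, and induction over successive obstacles finishes the proof. The paper leaves the exit-point agreement to the figure and \Cref{def:surf_S}, whereas your horizontal-projection bookkeeping actually verifies it: translation by $2b\sin\theta$ in the middle range, and reflection about the singular leaves through $\mathfrak c$ and the opposite corner on the end pieces.
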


\begin{proof}
We remind that because of the right angles of the obtstacles, every trajectory in $W(\Lambda,a,b,\theta)$ is parallel to itself outside of the set of obstacles $\tilde{R}$.

We show that the first entering points in $\tilde{R}$ and in $R=\pi_S(\tilde{R})$, and then exiting points, are the same in both models. The lemma follows by induction.

Let $\tilde p\in\RR^2\setminus \tilde{R}$, and $\tilde{\varphi}$ be the vertical trajectory starting at point $\tilde{p}$. Let $p=\pi_S(\tilde{p})$ and $\varphi$ be the vertical trajectory on $S_{(\Lambda,a,b,\theta)}$ starting at point $p$.
Let $\tilde{q}$ be the first intersection point between $\tilde{\varphi}$ and $\tilde{R}$, and $q$ be the first intersection point between $\varphi$ and $R$. 
We denote $\tilde{\varphi_0}$ the segment $[\tilde{p},\tilde{q}]$, and $\varphi_0$ the segment $[p,q]$.
By definition of the projection, $\pi_S(\tilde{\varphi_0})=\varphi_0$ and in particular $\pi_s(\tilde{q})=q$. 

Similarly, we denote $\tilde{\varphi_1}=[\tilde{q},\tilde{r}]$ the maximal segment of $\tilde{\varphi}$ containing $\tilde{q}$ and included in $\tilde{R}$. In other words, $\tilde{r}$ is the existing point of $\tilde{\varphi}$ from $\tilde{R}$. We define analogously $\varphi_1=[q,r]$ on $S_{(\Lambda,a,b,\theta)}$. The segment $\pi_S(\tilde{\varphi_1})$ is not vertical on $S_{(\Lambda,a,b,\theta)}$, on the contrary to $\varphi_1$. However, by construction of the slit (see \Cref{fig:rect_to_slit} and \Cref{def:surf_S}), they agree on $\pi_S(\tilde{r})=r$.

We conclude, with a direct induction, that both trajectories $\tilde{\varphi}$ and $\varphi$ coincide outside of $\tilde{R}$.

\end{proof}
\begin{rmk}
We highlight that the parameterizations of the curves $\varphi$ and $\tilde{\varphi}$ do not coincide. In fact, the difference in their parameterization may change at each crossing of an obstacle: sometimes $\varphi$ has less distance to cover, sometimes more, see Figure~\ref{fig:gamma_longer} and \ref{fig:gamma_shorter} below.
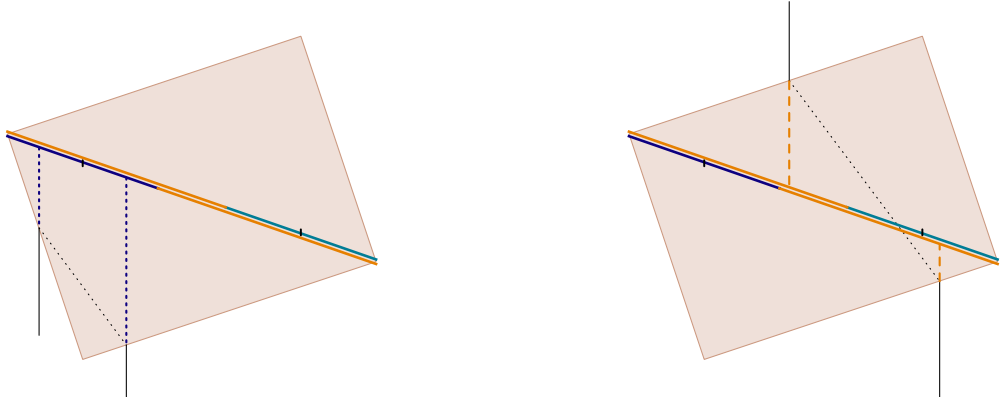
\begin{figure}[h!]
\begin{subfigure}[b]{0.47\textwidth}
\def\eps{0.03}
\def\xa{3.9}
\def\ya{1.3}
\def\xb{-1}
\def\yb{3}
\def\xp{-0.72}
\def\xpp{3.62}
\begin{tikzpicture}[line cap=round,line join=round,>=triangle 45,x=1.0cm,y=1.0cm]
\clip(-2.5,-0.5) rectangle (4,4.8);
\draw[color=colR,opacity=0.5,fill=colR,fill opacity=0.15] (0,0) -- (3.9,1.3) -- (2.9,4.3) -- (-1,3) -- cycle;

% traj left
\draw (-0.58,0.32)-- (-0.58,1.75);
\draw (0.58,0.19)-- (0.58,-0.5);
\draw [dotted] (-0.58,1.75)-- (0.58,0.19);
\draw [thick,dotted,color=colleft] (-0.58,2.85-\eps) --  (-0.58,1.75);
\draw [thick,dotted,color=colleft] (0.58,2.45-\eps) -- (0.58,0.19);

% slit
\draw [line cap=rect,line join=rect,color=colleft,shift={(0,-\eps)}, line width= 1pt] (-1,3) -- (1,2.30612);
\draw [line width=0.75pt] (0,2.65-\eps-0.05)--(0,2.65-\eps+0.03);
\draw [line cap=rect,line join=rect,color=colright,shift={(0,+\eps)}, line width= 1pt] (1.9,1.99388) -- (3.9,1.3);
\draw [line width=0.75pt] (2.9,1.65+\eps-0.03)--(2.9,1.65+\eps+0.05);
\draw [line cap=rect,line join=rect,color=colmid,shift={(0,+\eps)}, line width= 1pt] (-1,3) -- (1.9,1.99388);
\draw [line cap=rect,line join=rect,color=colmid,shift={(0,-\eps)}, line width= 1pt] (1,2.30612) -- (3.9,1.3);
\end{tikzpicture}
\caption{$\varphi$ (in plain black and dotted deep blue) may be longer than $\tilde{\varphi}$ (in plain and dotted black line)}
\label{fig:gamma_longer}
\end{subfigure}
\hfill
\begin{subfigure}[b]{0.47\textwidth}
\definecolor{colsg}{rgb}{0,0,0}
\def\eps{0.03}
\def\xa{3.9}
\def\ya{1.3}
\def\xb{-1}
\def\yb{3}
\def\xp{-0.72}
\def\xpp{3.62}
\begin{tikzpicture}[line cap=round,line join=round,>=triangle 45,x=1.0cm,y=1.0cm]
\clip(-2.5,-0.5) rectangle (4,4.8);
\draw[color=colR,opacity=0.5,fill=colR,fill opacity=0.15] (0,0) -- (3.9,1.3) -- (2.9,4.3) -- (-1,3) -- cycle;

% traj middle 2
\draw (3.13,1.04)-- (3.13,-0.5);
\draw [dotted] (1.13,3.71)-- (3.13,1.04);
\draw (1.13,4.76)-- (1.13,3.71);
\draw [thick,dashed,color=colmid] (1.13,2.3+\eps)--(1.13,3.71);
\draw [thick,dashed,color=colmid] (3.13,1.04)-- (3.13,1.55-\eps);

% slit
\draw [line cap=rect,line join=rect,color=colleft,shift={(0,-\eps)}, line width= 1pt] (-1,3) -- (1,2.30612);
\draw [line width=0.75pt] (0,2.65-\eps-0.05)--(0,2.65-\eps+0.03);
\draw [line cap=rect,line join=rect,color=colright,shift={(0,+\eps)}, line width= 1pt] (1.9,1.99388) -- (3.9,1.3);
\draw [line width=0.75pt] (2.9,1.65+\eps-0.03)--(2.9,1.65+\eps+0.05);
\draw [line cap=rect,line join=rect,color=colmid,shift={(0,+\eps)}, line width= 1pt] (-1,3) -- (1.9,1.99388);
\draw [line cap=rect,line join=rect,color=colmid,shift={(0,-\eps)}, line width= 1pt] (1,2.30612) -- (3.9,1.3);
\end{tikzpicture}
\caption{$\varphi$ (in plain black and dashed orange) may be shorter than $\tilde{\varphi}$ (in plain and dotted black line)}
\label{fig:gamma_shorter}
\end{subfigure}
\caption{Changing the surface from $\RR^2$ to $S_{(\Lambda,a,b,\theta)}$ changes the parameterization of the trajectory}
\end{figure}
 \end{rmk}

\subsection{The finite half-translation surface we will work with}
\label{subsec:finite_HTS}

Since $S_{(\Lambda, a, b, \theta)}$ is a periodic surface, we can study a quotient of it instead of itself. This is more convenient because we will get a finite half-translation surface. As a corollary of \Cref{lem:singularities_S}, we get the following.
\begin{lem}
\label{lem:stratum_X}
The quotient surface
$X_{(\Lambda, a, b, \theta)} = \sfrac{S_{(\Lambda, a, b, \theta)}}{\Lambda}$  is in the stratum $\mathcal{Q}(-1^2,1^2)$.
\end{lem}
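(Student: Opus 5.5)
The plan is to check three things: that $X_{(\Lambda,a,b,\theta)}$ is a genuine compact half-translation surface, what its singularities are, and that it is not the square of an abelian differential. Only the third needs real care.

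First, the half-translation structure. By \Cref{lem:singularities_S}, $S_{(\Lambda,a,b,\theta)}$ is $\Lambda$-periodic. Translations by elements of $\Lambda$ preserve the slits together with their identifications, since every slit is the same translated copy. So $\Lambda$ acts freely and properly discontinuously on $S$ by half-translation isometries. It follows that $X=S/\Lambda$ inherits a half-translation atlas away from the projected singularities. A fundamental domain can be taken to be a parallelogram spanned by a basis of $\Lambda$ with one slit cut in it; if the slit does not fit, we cut and paste the slit pieces back by elements of $\Lambda$. This shows $X$ is compact: it is a finite polygon with sides identified by maps $z\mapsto \pm z+c$.

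Second, the cone points. Again by \Cref{lem:singularities_S}, a fundamental domain contains exactly four singularities: two of conical angle $\pi$ and two of conical angle $3\pi$. These project injectively to $X$, since distinct singularities in one fundamental domain are not $\Lambda$-translates of each other. The quadratic differential on $X$ therefore has two simple poles (order $-1$) and two simple zeros (order $1$). As a consistency check, the Gauss--Bonnet relation $\sum k_i = 4g-4$ gives $-1-1+1+1=0$, hence $g=1$. This matches $X$ being a torus obtained from $\RR^2/\Lambda$ by the slit surgery.

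Third, ruling out a global square of an abelian differential. This is the main point, since the stratum $\mathcal{Q}(-1^2,1^2)$ consists of quadratic differentials that are \emph{not} squares. Here it is immediate: a square of a holomorphic $1$-form has only even-order zeros and no poles, equivalently all cone angles are multiples of $2\pi$. Our cone angles are $\pi$ and $3\pi$, so the linear holonomy is nontrivial. One can also see this directly: the self-identifications of the slit pieces by rotation (the $z\mapsto -z+c$ gluings in \Cref{def:surf_S}) give loops around the angle-$\pi$ points with holonomy $-1$.

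Combining the three steps, $X_{(\Lambda,a,b,\theta)}\in\mathcal{Q}(-1^2,1^2)$. The only delicate bookkeeping is in the first step, namely making sure the slit fits, after cut and paste, inside a fundamental domain so that no singularities are double-counted. This is handled by the remark in \Cref{lem:singularities_S} that every fundamental domain contains the same number of singularities.
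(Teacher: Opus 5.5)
Your proposal is correct and follows the paper's route: the paper states this lemma as an immediate corollary of \Cref{lem:singularities_S}, reading off from the four singularities per fundamental domain (cone angles $\pi,\pi,3\pi,3\pi$, i.e.\ orders $-1,-1,1,1$) that the quotient lies in $\mathcal{Q}(-1^2,1^2)$. Your additional checks make explicit what the paper leaves implicit: the descent of the half-translation structure to the compact quotient (which tacitly uses that the slits are pairwise disjoint, as guaranteed by admissibility), the Gauss--Bonnet genus count, and non-squareness from the odd orders.
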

We call $X_{(\Lambda, a, b, \theta)}$ the \emph{surface associated} with the tuple $(\Lambda,a,b,\theta)$. When the context is clear, we will simply write $X$ instead of $X_{(\Lambda, a, b, \theta)}$, and call it the associated surface.

\begin{rmk}
We could have considered first the quotient of the plane with obstacles by the lattice
and then have replaced the obstacle by a slit, we would have got the same surface $X_{\Lambda,a,b,\theta}$. 
\end{rmk}
Figure~\ref{fig:equiv_traj_on_torus} below illustrates the equivalence of trajectories between the torus with an obstacle and the slit torus. We kept the rectangle on the figure to allow the reader to compare and check that the trajectories outside the rectangle are the same.
\begin{figure}[h!]
         \centering
         \begin{subfigure}[b]{0.45\textwidth}
         \centering
\includegraphics[scale=0.2]{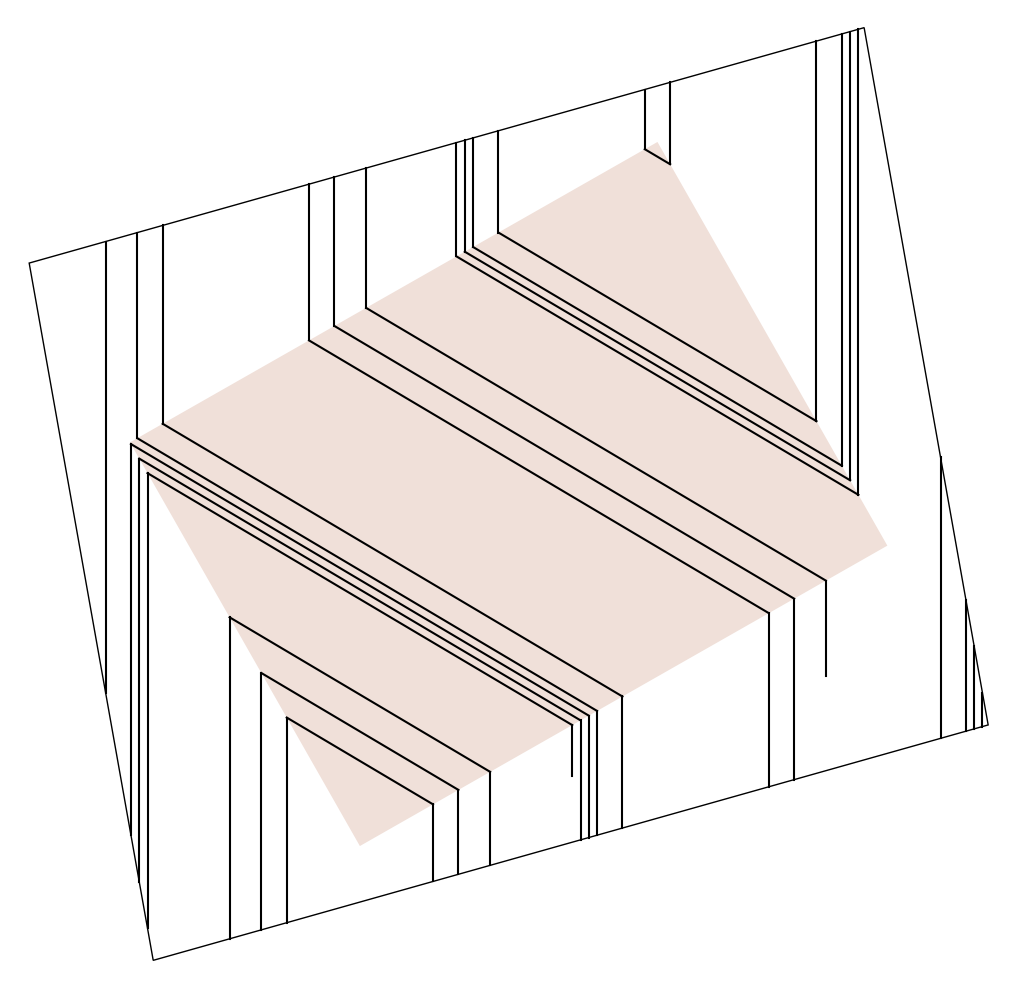}
\caption{On a torus with an obstacle}
\end{subfigure}
\hfill
\begin{subfigure}[b]{0.45\textwidth}
\centering
\includegraphics[scale=0.2]{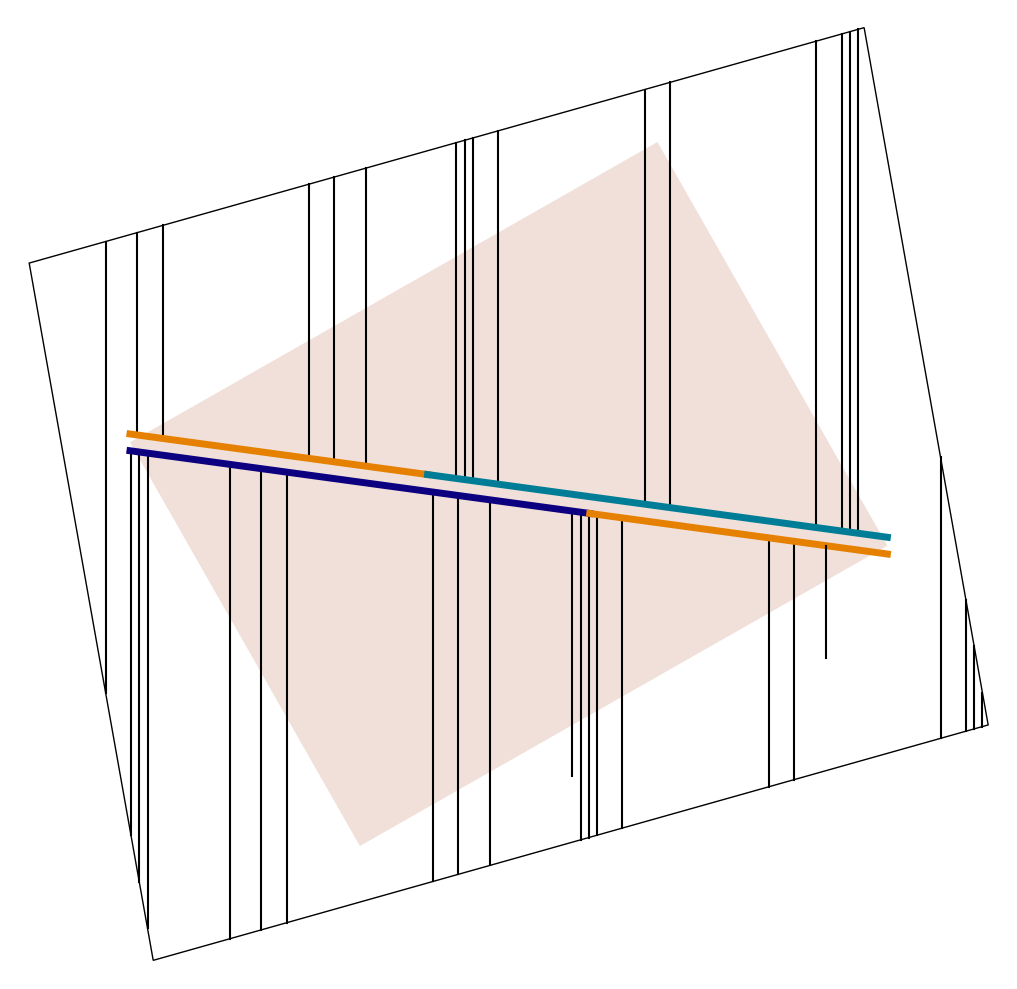}
\caption{On a torus with a slit}
\label{fig:slit_torus_with_traj}
\end{subfigure}
\caption{Two equivalent trajectories}
\label{fig:equiv_traj_on_torus}
\end{figure}

\begin{rmk}
In our context, the natural way to picture $X$ is as a slit torus, like in Figure~\ref{fig:slit_torus_with_traj}. We will also give another representation in Figure~\ref{fig:def_curves}, and utilize both representations for different purposes.
\end{rmk}

We denote the canonical projections by $ \pi :S_{(\Lambda, a, b, \theta)} \longrightarrow X_{(\Lambda, a, b, \theta)} $ and 
$$\pi_X = \pi\circ\pi_S : \RR^2\setminus\mathcal{I} \longrightarrow X_{(\Lambda, a, b, \theta)} .$$

\subsection{How generic is the surface?}\label{subsec:generic_surf}

In the previous section, we associated a surface $X_{(\lambda,a,b,\theta)}$ with each system $W(\lambda,a,b,\theta)$ where $(\lambda,a,b,\theta)$ is admissible. 
 We denote the set of such surfaces corresponding to a system $W(\Lambda,a,b,\theta)$ by
 $$\mathcal{A}=\left\{\left. X_{(\Lambda,a,b,\theta)} \right| (\Lambda,a,b,\theta) \text{ is admissible}\right\}\subset\mathcal{Q}(-1^2,1^2).$$
 Note that we can define $X_{(\Lambda,a,b,\theta)}$ (as a torus $\RR^2/\Lambda$ with a slit defined by $a,b$ and $\theta$) even if $(\Lambda,a,b,\theta)$ is not admissible. We denote the bigger set of surfaces obtained in this way by 
 $$\mathcal{X}=\left\{\left. X_{(\Lambda,a,b,\theta)} \right| (\Lambda,a,b,\theta) \text{ possibly non admissible } \right\}\subset\mathcal{Q}(-1^2,1^2).$$
 
We highlight that $\mathcal{A}$ is an open set of $\mathcal{X}$ because the non-overlapping condition is open.
Lastly, we introduce the set $\mathcal{S}=\mathcal{S}_1\sqcup\mathcal{S}_2$ of tori with a cut in the shape of a parallelogram with identifications, as in \Cref{fig:slit_general}. We denote the boundary of this parallelogram-shaped cut $\mathcal{P}$.
 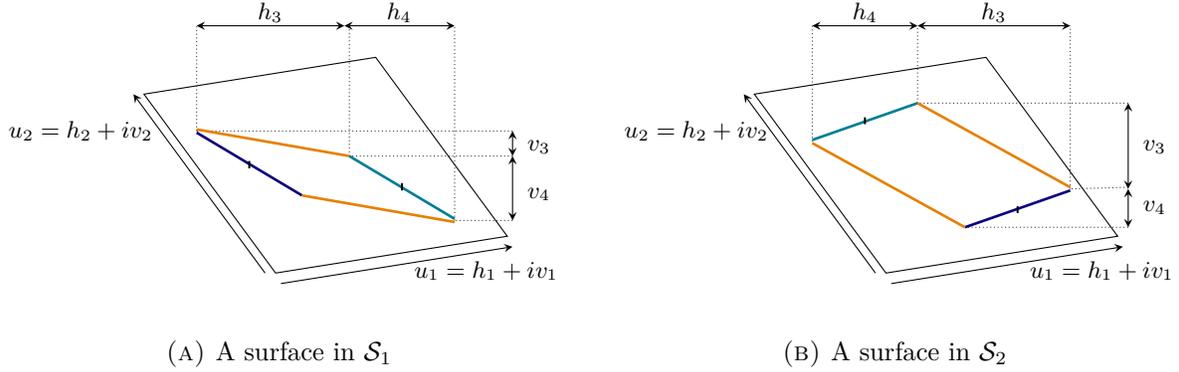
\begin{figure}[h!]
\begin{center}
\begin{subfigure}[b]{0.475\textwidth}
\definecolor{colhor}{rgb}{0.2,0.2,0.2}
\def\eps{0.03}
\def\xa{3.9}
\def\ya{1.3}
\def\xb{-1}
\def\yb{3}
\def\xp{-0.72}
\def\xpp{3.62}
\begin{tikzpicture}[>=stealth,x=.70cm,y=.70cm]

\draw (-2,3.7)--(0.5,0.3)--(4.9,1)--(2.4,4.4)--cycle;
% delim horizontales
\draw [densely dotted,colhor] (5,3) -- (-1,3);
\draw [densely dotted,colhor] (5,2.49388+\eps) -- (1.9,2.49388+\eps);
\draw [densely dotted,colhor] (5,1.3) -- (3.9,1.3);

\draw [densely dotted,colhor] (-1,3)--(-1,5);
\draw [densely dotted,colhor] (1.9,2.49388+\eps) -- (1.9,5);
\draw [densely dotted,colhor] (3.9,1.3) -- (3.9,5);

% slit
\draw [color=colleft,shift={(0,-\eps)}, line width= 1pt] (-1,3) -- (1,1.80612);
\draw [line width=0.75pt] (0,2.4-\eps-0.08)--(0,2.4-\eps+0.06);
\draw [color=colright,shift={(0,+\eps)}, line width= 1pt] (1.9,2.49388) -- (3.9,1.3);
\draw [line width=0.75pt] (2.9,1.9+\eps-0.06)--(2.9,1.9+\eps+0.08);
\draw [color=colmid,shift={(0,+\eps)}, line width= 1pt] (-1,3) -- (1.9,2.49388);
\draw [color=colmid,shift={(0,-\eps)}, line width= 1pt] (1,1.80612) -- (3.9,1.3);

\begin{footnotesize}

\draw [<->,shift={(0,5)}] (\xb,0)--(1.80612,0);
\draw (\xb/2+0.9,5.25) node {$h_3$};
\draw [<->,shift={(0,5)}] (1.80612,0)--(\xa,0);
\draw (\xa/2+0.9,5.25) node {$h_4$};

\draw [->,shift={(0.1,-0.2)}] (0.5,0.3)--(4.9,1);
\draw (4.5,0.35) node {$u_1=h_1+iv_1$};
\draw [->,shift={(-0.2,0)}] (0.5,0.3)--(-2,3.7);
\draw (-3.2,3) node {$u_2=h_2+iv_2$};

\draw [<->,shift={(0,0)}] (5,\ya)--(5,2.49388+\eps);
\draw (5.5,2.7) node {$v_3$};
\draw [<->,shift={(0,0)}] (5,2.49388+\eps)--(5,\yb);
\draw (5.5,1.8) node {$v_4$};
\end{footnotesize}
\end{tikzpicture}
    \caption{A surface in $\mathcal{S}_1$} %First combinatorics of the cut}
\end{subfigure}
\hfill
\begin{subfigure}[b]{0.475\textwidth}
%Warning: here colleft is on the right and colright is on the left
\definecolor{colhor}{rgb}{0.2,0.2,0.2}
\def\eps{0.03}
\def\xa{3.9}
\def\ya{1.3}
\def\xb{-1}
\def\yb{3}
\def\xp{-0.72}
\def\xpp{3.62}
\def\xc{-1}
\def\yc{2.8}
\def\xd{1}
\def\yd{3.5}
\def\xe{3.9}
\def\ye{1.9}
\def\xf{1.9}
\def\yf{1.2}
\begin{tikzpicture}[>=stealth,x=.70cm,y=.70cm]

\draw (-2.1,3.7)--(0.4,0.3)--(4.8,1)--(2.3,4.4)--cycle;
% delim horizontales
\draw [densely dotted,colhor] (5,\yd+\eps) -- (\xd,\yd+\eps);
\draw [densely dotted,colhor] (5,\ye+\eps) -- (\xe,\ye);
\draw [densely dotted,colhor] (5,\yf-\eps) -- (\xf,\yf-\eps);

\draw [densely dotted,colhor] (\xc,\yc)--(\xc,5);
\draw [densely dotted,colhor] (\xd,\yd+\eps) -- (\xd,5);
\draw [densely dotted,colhor] (\xe,\ye) -- (\xe,5);

% slit
\draw [color=colright,shift={(0,+\eps)}, line width= 1pt] (\xc,\yc) -- (\xd,\yd);
\draw [line width=0.75pt] (\xc/2+\xd/2,\yc/2+\yd/2+\eps-0.05)--++(0,0.13);
\draw [color=colleft,shift={(0,-\eps)}, line width= 1pt] (\xe,\ye) -- (\xf,\yf);
\draw [line width=0.75pt] (\xe/2+\xf/2,\ye/2+\yf/2-\eps-0.08)--++(0,0.13);
\draw [color=colmid,shift={(0,-\eps)}, line width= 1pt] (\xc,\yc) -- (\xf,\yf);
\draw [color=colmid,shift={(0,+\eps)}, line width= 1pt] (\xd,\yd) -- (\xe,\ye);

\begin{footnotesize}

\draw [<->,shift={(0,5)}] (\xc,0)--(\xd,0);
\draw (\xc/2+\xd/2,5.25) node {$h_4$};
\draw [<->,shift={(0,5)}] (\xd,0)--(\xe,0);
\draw (\xe/2+\xd/2,5.25) node {$h_3$};

\draw [->,shift={(0,-0.2)}] (0.5,0.3)--(4.9,1);
\draw (4.5,0.35) node {$u_1=h_1+iv_1$};
\draw [->,shift={(-0.3,0)}] (0.5,0.3)--(-2,3.7);
\draw (-3.2,3) node {$u_2=h_2+iv_2$};

\draw [<->,shift={(0,0)}] (5,\ye)--(5,\yd+\eps);
\draw (5.5,\ye/2+\yd/2) node {$v_3$};
\draw [<->,shift={(0,0)}] (5,\yf-\eps)--(5,\ye);
\draw (5.5,\yf/2+\ye/2) node {$v_4$};
\end{footnotesize}
\end{tikzpicture}
    \caption{A surface in $\mathcal{S}_2$} %Second combinatorics of the cut}
\end{subfigure}
\caption{Parameters of the surfaces in $\mathcal{S}$}
\label{fig:slit_general}
\end{center}
\end{figure}
An element in $\mathcal{S}$ is described by 8 real parameters:
\begin{itemize}
    \item $h_1,v_1,h_2,v_2\in \RR$ define the shape of the torus
    \item $h_3,h_4,v_3,v_4>0$ define the slit
\end{itemize}
For clarity of the pictures, we took  $h_3+h_4<h_1+h_2$ and $v_3+v_4<v_1+v_2$ so that the cut is included in the parallelogram-shaped fundamental domain of $\Lambda$ defined by $u_1=h_1+iv_1$ and $u_2=h_2+iv_2$. But we highlight that the cut could be longer, as soon as it does not overlap with itself. Remark that the geometry of the surface does not depend on where the cut is placed in a fundamental domain of $\Lambda$.
Since $\mathcal{S}\subset \mathcal{Q}(-1^2,1^2)$ are both 8 (real) dimensional, $\mathcal{S}$ is locally equal to $\mathcal{Q}(-1^2,1^2)$. In contrast, the surfaces in $\mathcal{A}$ and $\mathcal{X}$ have real dimension $7$ and are therefore specific.
\begin{itemize}
    \item 4 real parameters for the torus
    \item 2 real parameters for the sloap and length of the slit
    \item 1 real parameter for the length of the side associated with translation (the length of the other side of the slit is deduced from the length of the slit)
\end{itemize} 
 However, we show in this section that almost every $X\in\mathcal{X}$ is generic, meaning that:
\begin{enumerate}
\item
the vertical flow on the surface is uniquely ergodic 
the $g_t$-orbit of the surface is recurrent in moduli space (i.e. the surface is \emph{Masur generic}),
\item
the surface admits an Oseledets flag for the Kontsevich--Zorich cocycle (i.e. the surface is \emph{Oseledets generic}).
\end{enumerate} 
The "almost ever" is to understand with respect to the Masur-Veech measure, which is finite on the subset of surfaces of area at most 1.
We defined the Kontsevich--Zorich cocycle in \Cref{sec:background_KZ_geometric} and Oseledets flag in \Cref{subsec:Oseledets}.

\begin{lem}
\label{lem:X_generic}
For almost every admissible tuple $(\Lambda,a,b,\theta)$, the associated half-translation surface $X$ is Masur and Oseledets generic.
\end{lem}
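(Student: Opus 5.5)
We will deduce the statement from the genericity results for the full stratum, using the fact that $\mathcal{X}$ is a codimension-one family which is "transverse enough" to the Teichmüller flow and the horocyclic/rotation directions. The set $G\subset\mathcal{Q}(-1^2,1^2)$ of surfaces that are both Masur generic (recurrent $g_t$-orbit, hence by Masur's criterion uniquely ergodic vertical flow) and Oseledets generic has full Masur--Veech measure. This follows from Masur and Veech's ergodicity of $g_t$, from Poincaré recurrence, and from \Cref{thm:Oseledets} applied to the log-integrable Kontsevich--Zorich cocycle. Moreover $G$ is invariant under $g_t$. It is also invariant under the unstable horocyclic flow $h_s=\begin{pmatrix}1&0\\ s&1\end{pmatrix}$, i.e. the shears that keep vertical directions vertical. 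Indeed, such shears do not change the vertical foliation, and they change the forward Oseledets flag only by a map that preserves it: the forward behaviour of $g_t h_s X$ is asymptotic to that of $g_t X$, since $g_t h_s g_{-t}\to \mathrm{id}$. So $G$ is a full measure set saturated by the weak-stable leaves of $g_t$. Here the weak-stable leaves are the sets of surfaces sharing the vertical measured foliation, up to scaling, in period coordinates.

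\textbf{Step 1.} We express $\mathcal{X}$ in the period coordinates $(h_1,v_1,h_2,v_2,h_3,h_4,v_3,v_4)$ of $\mathcal{S}$ from \Cref{fig:slit_general}. A surface in $\mathcal{X}$ is characterized by one real linear (or at least smooth) relation among these parameters, which encodes that the cut is a genuine straight slit, e.g. $v_3/h_3 = v_4/h_4$ up to orientation conventions. We then check that the map $(\Lambda,a,b,\theta)\mapsto X_{(\Lambda,a,b,\theta)}$ is a local diffeomorphism onto this hypersurface, using the explicit formulas for $x,y$ in \Cref{def:surf_S}.

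\textbf{Step 2.} This is the key step: we show the hypersurface $\mathcal{X}$ is transverse to the weak-stable foliation. Equivalently, we exhibit one direction inside the weak-stable leaf that is not tangent to $\mathcal{X}$. In period coordinates the weak-stable leaf through $X$ is locally the affine set where the vertical components are fixed, namely $\{(h,v): v=\lambda v_0\}$, with the $h$-coordinates free. Since the horizontal parameters $h_i$ can be varied freely while the $v_i$ are fixed, and the constraint $v_3h_4=v_4h_3$ involves $h_3,h_4$ nontrivially (its gradient in $h$ is $(-v_4,v_3)\neq 0$ because $v_3,v_4>0$), the constraint is transverse to the leaves.

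\textbf{Step 3.} We conclude with a Fubini argument. Locally the stratum is a product of $\mathcal{X}$-charts and a weak-stable direction, by transversality. The Masur--Veech measure is Lebesgue in period coordinates, hence equivalent to the product of Lebesgue measure on $\mathcal{X}$ and the leafwise Lebesgue measure. Since $G^c$ is null and saturated by the leaves, its trace on $\mathcal{X}$ is Lebesgue-null. Pulling back through the local diffeomorphism of Step 1 gives a null set of admissible tuples.

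The main obstacle is the saturation claim for the Oseledets part. We must verify carefully that Oseledets genericity, for the discrete cocycle used later in \Cref{subsec:ccl_Oseledets}, depends only on the weak-stable leaf. If this is delicate, we will fall back on working with $G$ defined only by forward statements, which are stable under the backward-asymptotic perturbations $h_s$ and $g_t$, and check that the later usage only needs those forward properties.
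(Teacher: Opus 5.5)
Your overall architecture is the right one and matches what the paper needs: a full-measure set $G$ saturated by a foliation, transversality of $\mathcal{X}$ to that foliation, then Fubini. But Step~2 uses the wrong foliation, and the argument breaks there.

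Surfaces sharing the vertical measured foliation are those sharing the \emph{horizontal} period coordinates. The transverse measure of the vertical foliation is $|dx|$, so it is recorded by the $h_i$, not the $v_i$. Your own $h_s$ shows this: it fixes every $h_i$ and sends $v\mapsto v+sh$. (It is also the \emph{stable} horocycle flow, as $g_th_sg_{-t}=h_{se^{-2t}}\to\mathrm{id}$ shows.) So for $g_t=\mathrm{diag}(e^t,e^{-t})$ the strong stable leaf is $\{h\ \text{fixed},\ v\ \text{free}\}$. Your set $\{v=\lambda v_0,\ h\ \text{free}\}$ is the weak \emph{unstable} leaf, along which forward differences are expanded by $e^{t}$.

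Forward genericity is not constant on these leaves. For example, having a vertical saddle connection $\gamma$ is the linear condition $\mathrm{Re}\,\mathrm{hol}(\gamma)=0$ on the $h$'s. This condition cuts across such leaves, and surfaces satisfying it have divergent forward $g_t$-orbits. Hence $G^c$ is not a union of the leaves you use. The transversality you check, through the gradient $(-v_4,v_3)$ in the $h$-variables, is therefore irrelevant to Step~3.

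The way you obtain saturation is also insufficient. Invariance under $g_t$ and $h_s$ only gives saturation by a two-dimensional subfoliation of the five-dimensional weak stable foliation. Worse, both of these directions are \emph{tangent} to $\mathcal{X}$: a linear image of a torus with a straight slit carrying these identifications is again such a surface. So they could never supply a transverse direction.

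What you need is that forward genericity is constant along entire strong stable leaves. This holds because surfaces differing only in vertical period coordinates have forward-asymptotic $g_t$-orbits, which is exactly what the paper invokes. Transversality must then be checked in the $v$-variables. Your relation $v_3h_4=v_4h_3$ has gradient $(h_4,-h_3)\neq 0$ in $(v_3,v_4)$. The paper's one-dimensional fibres $F(Y)$ give such a transverse stable direction: they fix $u_1,u_2,h_3,h_4$ and $v_3+v_4$, and let $v_3$ vary.

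With that swap, your Fubini argument in Step~3 goes through. It also spells out the measure-theoretic step that the paper's write-up leaves implicit, since the paper literally only shows that generic surfaces are dense in $\mathcal{A}$.
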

\begin{proof}
Let $\Omega\subset\mathcal{A}$ be an open set in $\mathcal{A}$. We will show that $\Omega$ contains a Masur and Oseledets generic surface. Since $\Omega$ is arbitrary, this proves the lemma.
Up to shrinking $\Omega$, we assume that $\Omega$ is included in either $\mathcal{S}_1$ or $\mathcal{S}_2$. We now thicken $\Omega$ to get an open set of $\mathcal{S}$. To do so, let $\mathcal{U}$ be an open set of $\mathcal{S}$ containing $\Omega$ and contained in either $\mathcal{S}_1$ or $\mathcal{S}_2$. For every $Y\in\Omega$, we define the set $F(Y)\subset\mathcal{S}$:
$$F(Y) = \left\{  Z\in\mathcal{U}  \left| \begin{array}{cl}
     \forall i=1,2, & u_i(Z)=u_i(Y), \\
     \forall i=3,4, & h_i(Z) = h_i(Y), \\
    &  v_3(Z)+v_4(Z) =v_3(Y)+v_4(Y)
\end{array} \right. \right\},$$
and we thicken $\Omega$ with them:
$$ \mathcal{V}= \bigsqcup_{Y\in\Omega} F(Y).$$
From the coordinates of the elements in $\mathcal{S}$, we see that it $\mathcal{V}$ is an open set in $\mathcal{S}$, ie. in $\mathcal{Q}(-1^2,1^{2})$. Hence, it contains a surface, say $Z_0\in F(Y_0)$, that is both Masur and Oseledets generic.

The set $F(Y_0)$ is one (real) dimensional: all the surfaces in $F(Y_0)$ have the same coordinates as $Y_0$ except for $v_3$ and $v_4$ for which we have one degree of freedom. Moreover the surfaces in $F(Y_0)$ differ only in some vertical coordinates (ie. they are in the same local stable manifold) and therefore have the same forward asymptotic behavior under $g_t$ in $\mathcal{Q}(-1^2,1^2)$. 
Since Masur and Oseledets genericities depend only on the asymptotic behavior, $Y_0$ is Masur and Oseledets generic like $Z_0$.
\end{proof}

\subsection{The new question}\label{subsec:new_question}
When we quotient the surface $S_{(\Lambda,a,b,\theta)}$ by the lattice $\Lambda$, we forget the position of the trajectory in the plane $\RR^2$. But we can recover this information using the algebraic intersection number of our curve with a basis of the homology. This is the purpose of \Cref{lem:transition}. Before stating it, we formalize how we get a closed loop from a trajectory (between time 0 and time $T$).

Let $(e_1,e_2)$ be a basis of $\Lambda$. The vectors $e_1$ and $e_2$ induce two curves, respectively $\xi_1$ and $\xi_2$, say based at a point $x_0\in X$.
Then, $(\xi_1,\xi_2)$ generates the fundamental group $\pi_1(X,x_0)$ of $X$.
Their homology classes $(\zeta_1,\zeta_2)=([\xi_1],[\xi_2])$ form a symplectic basis of $H_1(X,\RR)$.
Moreover, $X$ is topologically a torus and $\xi_1$ and $\xi_2$ two closed (non null homotopic) curves intersecting once, so $X\setminus\{\xi_1,\xi_2\}$ is topologically a disk and in particular is path connected.
We conclude that any two points in $X$ can be connected by a path that does not intersect $\xi_1$ nor $\xi_2$. 
In particular, we can close the trajectory between time 0 (starting at point $p$) and time $T$ into a loop, which we will denote by $\gamma_T$.
\begin{figure}[h!]
\centering
\begin{subfigure}[b]{0.4\textwidth}
\includegraphics[scale=0.1]{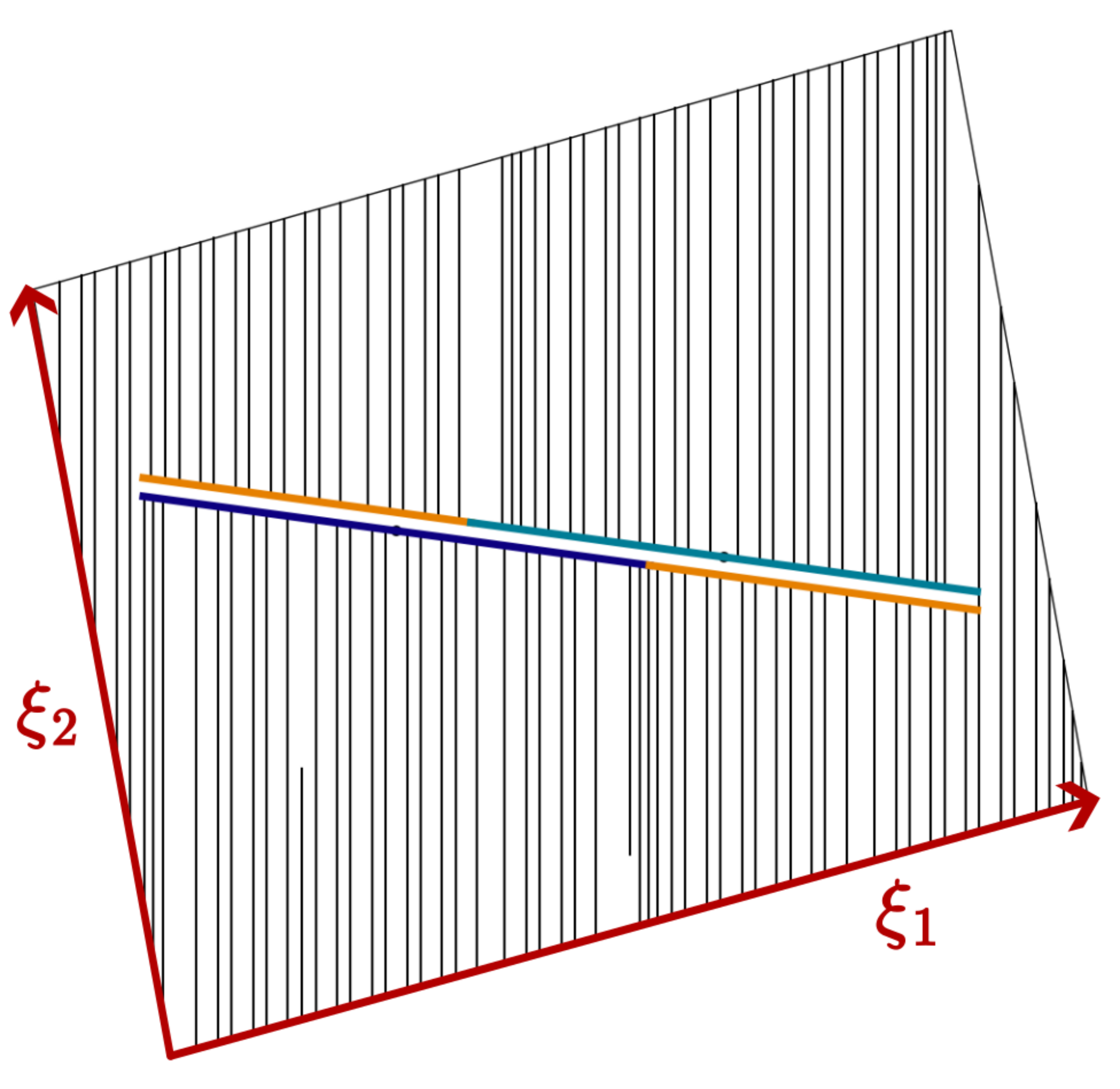}
\caption{Cycles $\zeta_1$ and $\zeta_2$}
\label{fig:torus_slit_hv}
\end{subfigure}
\hspace{1cm}
\begin{subfigure}[b]{0.4\textwidth}
\includegraphics[scale=0.1]{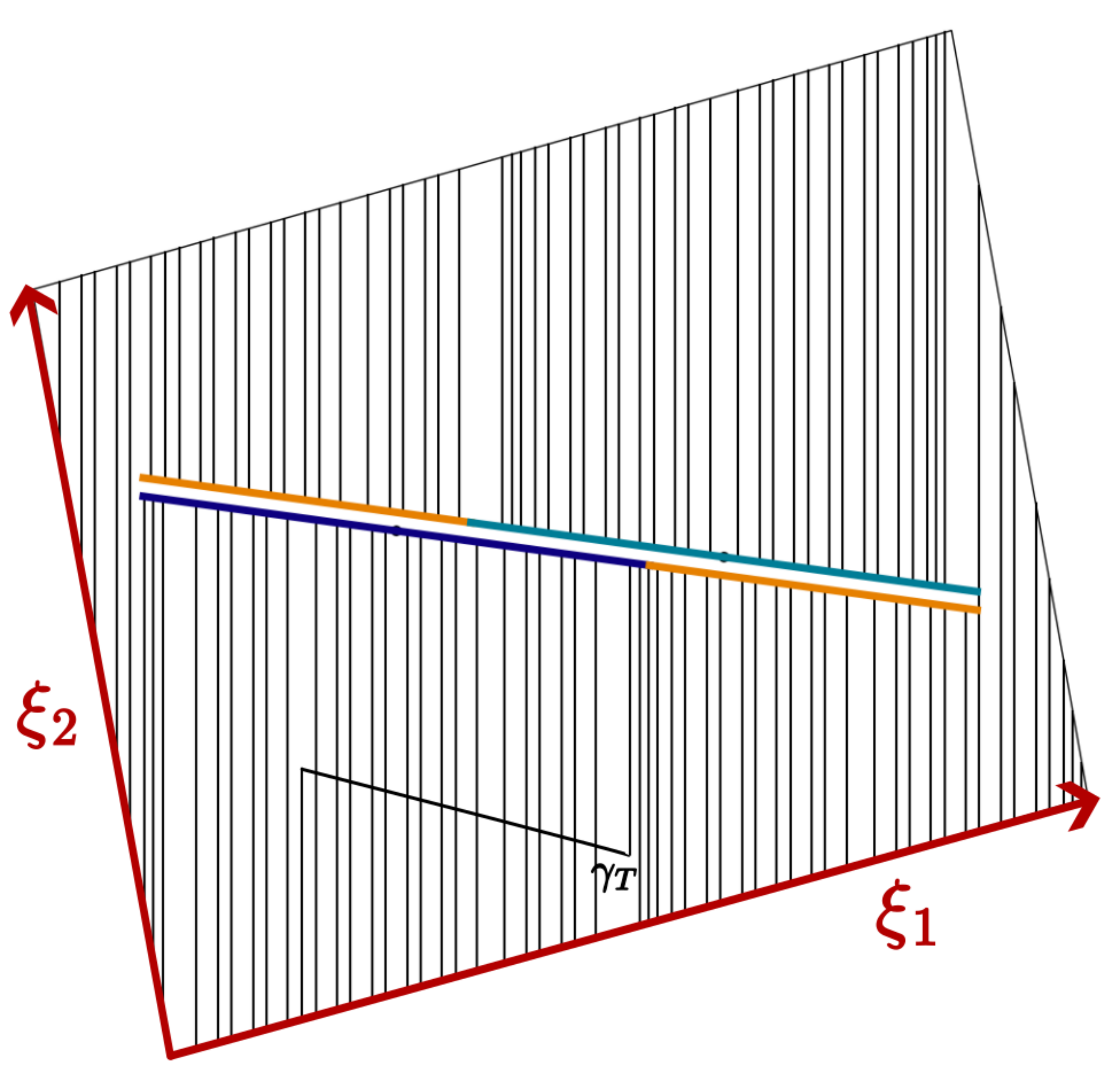}
\caption{Cycle $\gamma_T$}
\label{fig:torus_slit_gamma}
\end{subfigure}
\caption{The cycles that we consider}
\label{fig:torus_slit_hv_gamma}
\end{figure}
More precisely, we denote by 
$$\Phi : \begin{array}{ccc}
 X\times\RR_+ & \longrightarrow & X \\
(x,s) & \mapsto & \Phi_s(x)
\end{array}
$$ 
the vertical flow in $X$. In particular, for every $x$, $\Phi_0(x)=x$. For $s<t$, we denote by $\varphi_s^t(x)$ the part (of length $t-s$) of the vertical foliation of $x$ that is between $\Phi_s(x)$ and $\Phi_t(x)$.
Let $T_1,T_2>0$, and denote by $\gamma_{T_1,T_2}(p)$ the closed curve made of the union of $\varphi_{T_1}^{T_2}(p)$ and a path between $\Phi_{T_2}(p)$ and $\Phi_{T_1}(p))$ that does not intersects $\xi_1$ nor $\xi_2$.
For $T>0$, we define 
$$
\gamma_T(p) = \gamma_{0,T}(p)
$$
the cycle that starts at $p$, follows the vertical flow until time $T$ and that comes back to $p$ without intersecting $\xi_1$ nor $\xi_2$.
Finally, we denote by $$\Psi : 
 \begin{array}{ccc}
\RR^2\times\RR_+ & \longrightarrow &  \RR^2 \\
(x,s) & \mapsto & \Psi_s(x)
\end{array}
$$
the flow of the wind-tree tiling billiards.
We denote by $\iota$ the intersection form on $H_1(X,\RR^2)$. As an example, $\gamma=n_1\zeta_1+n_2\zeta_2\in H_1(X,\RR^2)$ satisfies
\begin{align*}
\iota(\zeta_1,\gamma)&=\iota(\zeta_1,n_1\zeta_1+n_2\zeta_2)=n_2\\
\iota(\zeta_2,\gamma)&=\iota(\zeta_2,n_1\zeta_1+n_2\zeta_2)=-n_1.
\end{align*}

\begin{lem}\label{lem:transition}
Let $X\in\mathcal{S}$ and $v\in H_1(X,\RR)$. If
$$ 
\exists C>0, \,\forall T>0, \qquad |\iota(v,\gamma_T)| < C
$$
then
$$
\exists C'>0, \, \exists z\in\RR^2\setminus\{0\}, \, \forall T>0, \qquad |\langle z,\Psi_T(\tilde{p})-\tilde{p}\rangle| < C',
$$
where $\langle\cdot ,\cdot\rangle$ denotes the Euclidean scalar product and $\tilde{p}\in\pi^{-1}(p)$.
\end{lem}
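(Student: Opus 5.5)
The plan is to read off the displacement of the trajectory in the plane from the homology class of $\gamma_T$. Throughout, $T$ is measured along the vertical flow $\Phi$ on $X$; by the remark after Lemma~\ref{lem:equivalent_traj} this differs from the parametrization of $\Psi$, but since the trajectories coincide outside the obstacles, we only need to compare positions at matching points.

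First write $\gamma_T = n_1(T)\zeta_1 + n_2(T)\zeta_2$ in $H_1(X,\RR)$. The expected coefficients are integers, and we claim that
$$\Psi_T(\tilde p)-\tilde p = n_1(T) e_1 + n_2(T) e_2 + r(T),$$
where $r(T)$ is uniformly bounded. To see this, lift $\gamma_T$ to $S$ starting at $\tilde p$ (via $\pi_S$). The lift of the vertical piece $\varphi_0^T(p)$ ends at some $\tilde q_T$ with $\pi(\tilde q_T)=\Phi_T(p)$. By Lemma~\ref{lem:equivalent_traj}, $\tilde q_T$ agrees with the billiard position $\Psi_{T'}(\tilde p)$ at the corresponding time $T'$, up to an error bounded by the diameter of an obstacle.

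The closing path avoids $\xi_1$ and $\xi_2$. It therefore lives in the disk $X\setminus(\xi_1\cup\xi_2)$, whose lift is a single fundamental domain $D$ of $\Lambda$. Hence the closing path lifts to a path from $\tilde q_T$ back to $\tilde p + \lambda_T$ of length at most $\mathrm{diam}(D)$, with $\lambda_T\in\Lambda$. A closed loop on $X$ lifts to a path with displacement $\lambda_T$, and $\lambda_T = n_1 e_1 + n_2 e_2$, where the $n_i$ count signed crossings of the dual curves. By the intersection formulas in the text, $n_2 = \iota(\zeta_1,\gamma_T)$ and $n_1 = -\iota(\zeta_2,\gamma_T)$. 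Altogether,
$$\bigl|\Psi_{T'}(\tilde p)-\tilde p-\lambda_T\bigr| \le \mathrm{diam}(D)+\mathrm{diam}(\mathcal R).$$
One subtlety is that the slit may stick out of $D$. We handle this by choosing $D$ to contain the slit, or simply by absorbing the slit length into the constant.

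Next, write $v = \alpha\zeta_1+\beta\zeta_2$ with $(\alpha,\beta)\neq 0$; if $v=0$ the hypothesis is vacuous and there is nothing to prove. Then
$$\iota(v,\gamma_T)=\alpha n_2-\beta n_1.$$
Choose $z\neq 0$ with $\langle z,e_1\rangle=-\beta$ and $\langle z,e_2\rangle=\alpha$. This is possible since $(e_1,e_2)$ is a basis, so the linear map $z\mapsto(\langle z,e_1\rangle,\langle z,e_2\rangle)$ is invertible. With this choice, $\langle z,\lambda_T\rangle = \iota(v,\gamma_T)$, and therefore
$$|\langle z,\Psi_{T'}(\tilde p)-\tilde p\rangle| < C + |z|\,(\mathrm{diam} D+\mathrm{diam}\mathcal R) =: C'.$$

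Finally, the map $T\mapsto T'$ is monotone and surjective, up to the pieces of trajectory inside obstacles. For times at which the billiard trajectory is inside an obstacle, the position moves by at most $\mathrm{diam}(\mathcal R)$ from a matched point. So the bound holds for all $T$, at the cost of enlarging $C'$.

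The main obstacle is making the lifting step rigorous. We must check that the closing path really lifts into a single fundamental domain, given that the slit identifications are nontrivial half-translations. We must also check that the homological coefficients $n_i$ coincide with the lattice displacement. I would do this by working with the projection $S\to X=S/\Lambda$, which is a genuine $\Lambda$-covering, so that deck transformations are measured by intersection with $\xi_1$ and $\xi_2$.
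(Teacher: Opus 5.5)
Your proof is correct and follows the paper's route: lift $\gamma_T$ to the $\Lambda$-cover, use that the closing path avoids $\xi_1,\xi_2$ to keep the lattice displacement $\lambda_T$ within bounded distance of the billiard position, pick $z\perp\tilde v$ so that $\langle z,\lambda_T\rangle$ is controlled by $\iota(v,\gamma_T)$, and absorb the times spent inside obstacles into the constant. Your explicit choice $\langle z,e_1\rangle=-\beta$, $\langle z,e_2\rangle=\alpha$ gives the same direction as the paper's projection of $\tilde w$ onto $\tilde v^{\perp}$ (after completing $v$ to a basis $(v,w)$), just more directly.

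One small correction: for $v=0$ the hypothesis is trivially satisfied, not vacuous, and the conclusion does not follow (your $z$ would be $0$). The lemma must therefore be read with $v\neq 0$, as the paper tacitly assumes.
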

The vector $\Psi_T (\tilde{p})-\tilde{p}$ corresponds to the displacement of the tiling billiards trajectory in the plane, between times $0$ and $T$.
In other words, the wind-tree tiling billiards trajectory is trapped in a strip of width $2C'/||z||$ and orthogonal to the vector $z$.

The idea of the proof is simple. Imagine for a second that $v=\zeta_1$, the algebraic intersection number $k$ between $\gamma_T$ and $\zeta_1$ gives us information on the position of the trajectory at time $T$ in the plane: it lies at a point $xe_1+ye_2$ where $\lfloor y \rfloor = k$. Therefore, the trajectory must remain in a strip in the direction of $e_1$. The complete proof must first take into account the reparameterization between the trajectory in the wind-tree model and the one on the half-translation surface, and secondly, treat the general case of $v\in H_1(X,\RR)$.

\begin{proof}
We recall that the parameterization of $\Psi_s$ does not necessarily coincide with the one of $\Phi_s$, i.e. a priori $\pi_X(\Psi_s(\tilde{p})) \neq \Phi_s(p)$, for $\tilde{p}\in\RR^2\setminus\tilde{R}$ and $p=\pi_X(\tilde{p})\in X$, where the set $\tilde{R}$ project onto the set of rectangles in the wind-tree model (see Lemma~\ref{lem:equivalent_traj}).
Let ${\mathcal{T}_r = \{s \in\RR_+ | \Psi_s(\tilde{p}) \notin R \}}$ be the set of times when the tiling billiards trajectory in the plane lies outside the rectangles $R$.
Let $r:\RR_+ \longrightarrow \RR_+$ be a reparameterization function such that $\forall s\in\mathcal{T}_r, \, \pi_X(\Psi_s(\tilde{p})) = \Phi_{r(s)}(p)$. The values of $r$ outside the set $\mathcal{T}_r$ do not matter.
Let $T\in r(\mathcal{T}_r)$, so that $\Phi_T(p)\notin R=\pi_X(\tilde{R})$, and let $ s=r^{-1}(T)$.
We have $\pi_X(\Psi_{s}(\tilde{p}))=\Phi_T(p)$.

Complete $v$ into a basis $(v,w)$ of $H_1(X,\RR)$. Let $v=v_1\zeta_1+v_2\zeta_2$ and $w=w_1\zeta_1+w_2\zeta_2$ be their decomposition in the basis $(\zeta_1,\zeta_2)$ of $H_1(X,\RR)$ induced by the basis $(e_1,e_2)$ of $\Lambda$.
We consider $\tilde v = v_1e_1+v_2e_2$ and $\tilde w = w_1e_1+w_2e_2$ which form a basis $(\tilde{v},\tilde{w})$ of $\RR^2$. 
We decompose the loop $\gamma_T = x v+yw =(xv_1+yw_1)\zeta_1+(xv_2+yw_2)\zeta_2$ in both bases $(v,w)$ and $(\zeta_1,\zeta_2)$. Note that $(xv_1+yw_1)$ and $(xv_2+yw_2)$ are integers.
The end point in $\RR^2$ of the lift $\tilde\gamma_T$ of $\gamma_T$ to $\RR^2\setminus\mathcal{P}$ is 
$$\tilde q =\tilde p + x\tilde v + y \tilde w =\tilde{p}+(xv_1+yw_1)e_1+(xv_2+yw_2)e_2.$$ 
It is at distance at most 
$\max \{ || u_1e_1+u_2e_2|| \, | \, u_1,u_2\in[0,1] \}$
away from $\Psi_s(\tilde{p})$ because we obtained $\gamma_T$ by closing the trajectory $\varphi_0^T$ without crossing $\xi_1$ nor $\xi_2$.
Let $z$ be the projection of $\tilde w$ to the orthogonal subspace to $\tilde v$. It is non zero since $\tilde{w}$ is not in the linear span of $\tilde{v}$. 
Then
\begin{align*}
    |\langle z,\Psi_{s}(\tilde{p}) - \tilde{p}\rangle| &\leqslant |\langle z, \Psi_T(\tilde{p})-\tilde q \rangle| + | \langle z,\tilde{q}-\tilde{p}\rangle|  \\
    &\leqslant m + |y| |\langle z, w\rangle |
\end{align*}
where $m=\max \{\langle z, u_1e_1+u_2e_2 \rangle \, | \, u_1,u_2\in[0,1] \}$.
Moreover, $|y||\iota(v,w)|=|\iota(v,\gamma_T)|<C$ (by assumption) so $|y|<C/|\iota(v,w)|$.
The constant 
$$C' = m + C|\langle z,w\rangle|/|\iota(v,w)|$$
depends only on $v$ and $w$, not on $p$ nor on $T$, and satisfies $ |\langle z,\Psi_{s}(\tilde{p}) - \tilde{p}\rangle| \leqslant  C'$. 

For a time $T\notin\mathcal{T}_r$, we consider the closest time $s\in\mathcal{T}_r$ to $T$ such that $T<s$. Then the difference vector $\epsilon = \left(\Psi_{T}(\tilde{p}) - \tilde{p} \right) - \left(\Psi_{s}(\tilde{p}) - \tilde{p} \right)$ is uniformly bounded. 
Therefore, up to take a larger constant $C'$, we also have $ |\langle z,\Psi_T(\tilde{p})-\tilde{p}\rangle| < C'$, 
which ends the proof.
\end{proof}

Lemma~\ref{lem:transition} shows that Theorem~\ref{thm:bounded_intersection} implies Theorem~\ref{thm:tb_in_wt}. 
We have now rephrased the existence of a trapping strip into the existence of a homology class that has a bounded algebraic intersection number with $\gamma_T$.

To answer this question, the idea is to use the Teichmüller flow $g_t$ to "shorten" the obtained curve $\gamma_T$ so that we have fewer intersections to count with a new basis. But when we closed the path to obtain $\gamma$, we (\textit{a priori}) added a horizontal component, which will be extended by $g_t$. To be able to still control the length of the curve after applying $g_t$, we decompose $\gamma_T$ into several closed curves $\gamma^{(k)}$, which we will each study at a different scale of time, chosen so that the horizontal component of $\gamma^{(k)}$ is bounded. A different scale of time means that we change from the basis $(\zeta_1,\zeta_2)$ of $H_1(X,\RR)$ to a basis $(\zeta_1^{(k)},\zeta_2^{(k)})$, image by the flow. As a result, $\gamma^{(k)}$ intersects many fewer times $(\zeta_1^{(k)},\zeta_2^{(k)})$ than $(\zeta_1,\zeta_2)$ (in fact, a bounded number of times).
Finally, the Kontsevich--Zorich cocycle gives us the relation between the basis $(\zeta_1,\zeta_2)$ and $(\zeta_1^{(k)},\zeta_2^{(k)})$.
Hence, we get the algebraic intersection number between $\gamma^{(k)}$ and both $\zeta_1$ and $\zeta_2$ from the algebraic intersection number between $\gamma^{(k)}$ and both $\zeta_1^{(k)}$ and $\zeta_2^{(k)}$, the latter being bounded.

We explain the decomposition of $\gamma_T$ into the $\gamma^{(k)}$'s in the next section.

\section{Decomposition of the trajectory}\label{sec:decompo_traj}
In order to control the decomposition of $\gamma_T$ presented in \Cref{subsec:decompo}, it is easier to restrict ourselves in a nice open set of $\mathcal{S}$, which we introduce in \Cref{subsec:prefered-open-set}. In \Cref{subsec:bounds}, we establish uniform bounds on a neighborhood of $X$.

\subsection{A preferred open set}\label{subsec:prefered-open-set}
Let $\epsilon >0$ and consider the following conditions on the coordinates of a surface in $\mathcal{S}$  (cf. parameterizations of surfaces in $\mathcal{S}$ in \Cref{fig:slit_general}).
\begin{align}
& 0< \arg u_1 < \pi/2 < \arg u_2 < \pi \label{cond1} \\
&  h_3+h_4 < h_1 - h_2 \label{cond2} \\
& v_3+v_4 < v_2 - v_1 \label{cond3} \\
& \max(v_3, v_4) + \varepsilon < \min(v_1, v_2)  \label{cond4}
\end{align}
We define the open set of $\mathcal{S}$
$$\mathcal{O}_\varepsilon = \{X\in\mathcal{S} \, | \, X \textrm{ satisfies conditions (\ref{cond1}) to (\ref{cond4})} \}.$$
This open set corresponds to tori with a parallelogram fundamental domain (given by $u_1$ and $u_2$) that contains the slit (conditions (\ref{cond1}) and (\ref{cond2}). In other words, the slit is not too long. This is to avoid technical difficulties. The conditions (\ref{cond1}) and (\ref{cond4}) will be useful to prove \Cref{lem:lower_bound_vert_length}. Condition (\ref{cond1}) 
ensures that an "upward" going loop intersects positively $\zeta_1$ and negatively $\zeta_2$. The meaning and importance of the last condition will appear in the proof of \Cref{lem:lower_bound_vert_length}.

Let $X\in\mathcal{O}_\varepsilon$ be a half-translation surface that is Masur and Oseledets generic. 
We assume moreover that $X$ has no vertical nor horizontal saddle connection. Note that to have a horizontal or vertical saddle connection, the eight real parameters describing $X$ 
have to satisfy a linear equation. This means that almost every surface in $\mathcal{O}_\varepsilon$ does not have a horizontal or vertical saddle connection.

Let $p\in X$ be a regular point for the vertical flow in $X$ (almost every point is regular). 
We consider the vertical trajectory in $X$ starting at $p$. 

\begin{rmk}
The same argument works for p not regular, it suffices to 
restrict ourselves to the times before the first time the vertical flow hits a singularity.
\end{rmk}

\subsection{Uniform bounds in a neighborhood of the surface \texorpdfstring{$X$}{X}}\label{subsec:bounds}
\begin{dfn}
We call \emph{vertical length} of a path $\gamma$ its integral against the $1$-form $\textrm{d}z$. 
We denote it by $l_{\textrm{v}}$:
$$
l_{\textrm{v}}(\gamma) = \int | Im \gamma' (t)| \textrm{d}t= \int | \gamma_y' (t)| \textrm{d}t.
$$
if $\gamma=\gamma_x +i\gamma_y$.
\end{dfn}

The following lemma is about the non algebraic intersection number between curves of the basis of homology and a vertical path.
\begin{lem}
\label{lem:cte_K}
There exist a relatively compact neighborhood $\mathcal{U}$ of $X$ in $\mathcal{Q}(-1^2,1^2)$ and a constant $K>0$ such that for every $Y\in\overline{\mathcal{U}}$, there exists a segment $I(Y)\subset Y$ such that, for $i,j\in\{1,2\}$:
\begin{itemize}
\item
every path $\gamma$ on $Y$ of \emph{vertical
length} at least $K$ intersects at least once the segment $I(Y)$, 
\item
every path $\gamma$ on $Y$ that intersects at least twice $I(Y)$ has \emph{vertical length} at least $K^{-1}$.
\end{itemize}
Moreover, for every $Y\in\overline{\mathcal{U}}$, the homology group $H_1(Y,\RR)$ can be identified to $H_1(X,\RR)$, and the basis $(\zeta_1(Y),\zeta_2(Y))$ of $H_1(Y,\RR)$ corresponding to the basis $(\zeta_1,\zeta_2)$ of $H_1(X,\RR)$ is such that:
$$
K^{-1}\leqslant l_{\text{v}}\left(\zeta_j(Y) \right) \leqslant K
$$
where $l_{\text{v}}\left(\zeta \right)$ denotes the vertical length of the shortest curve in the class of $\zeta$.
\end{lem}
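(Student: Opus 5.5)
The plan is to build the segment $I(X)$ on $X$ first, using its zippered rectangles structure, and then propagate everything to a small neighborhood by continuity of the local period coordinates. Since $X$ has no vertical or horizontal saddle connection, \Cref{subsec:background_zip-rect} provides a horizontal segment $I=I(X)$ starting at a singularity that meets every vertical leaf. The vertical flow then decomposes $X$ into finitely many rectangles $R_1,\dots,R_d$ with bases in $I$ and heights $H_1,\dots,H_d>0$. Set $H_{\max}=\max_i H_i$ and $H_{\min}=\min_i H_i$.

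On $X$ the two bullet points follow directly. A path of vertical length larger than $H_{\max}$ must cross the top or bottom of some rectangle, hence must meet $I$, because the only way to leave a rectangle while moving vertically is through $I$. The horizontal parts of the path do not matter, since the vertical sides of rectangles are glued to other rectangles, so the vertical position inside the "rectangle coordinate" is controlled by vertical length. For the second point, a path joining two points of $I$ either stays within a horizontal neighborhood of $I$ (and then meets $I$ only once, being transverse or contained in it; this degenerate case needs care), or it must traverse a full rectangle height, giving vertical length at least $H_{\min}$. I would handle the degenerate tangential case by counting intersections with $I$ transversally, or by slightly tilting/thickening, since paths lying along $I$ are irrelevant for our vertical trajectories.

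For the neighborhood, I would use period coordinates around $X$ in $\mathcal{Q}(-1^2,1^2)$; the coordinates of $\mathcal{S}$ serve as such. For $Y$ close to $X$, the zippered rectangle data with the same combinatorics persist: the lengths of the subintervals of $I(Y)$ and the heights $H_i(Y)$ depend continuously on $Y$. This holds because the absence of vertical and horizontal saddle connections makes the combinatorics locally constant, once we choose the endpoint of $I$ via a fixed finite piece of singular leaf, which is stable under perturbation. Choose $\mathcal{U}$ relatively compact so that on $\overline{\mathcal{U}}$ the heights stay in $[H_{\min}/2,2H_{\max}]$. I expect this stability of the zippered rectangles under perturbation to be the main obstacle. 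If it is delicate, I would instead keep $I(Y)$ as the corresponding horizontal segment and argue only that the first return time of the vertical flow stays bounded above and below. This is again a continuity statement for finitely many vertical trajectories from the endpoints and singularities, using that they hit $I$ transversally and away from its endpoints.

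The homology statement comes from the Gauss–Manin connection. Over the simply connected (up to shrinking) set $\mathcal{U}$, $H_1(Y,\RR)$ is canonically identified with $H_1(X,\RR)$. The vertical length of the shortest representative of $\zeta_j(Y)$ is bounded above by the vertical length of a fixed representative, which varies continuously. It is bounded below by $|\mathrm{Im}\,\mathrm{hol}(\zeta_j(Y))|$, which is close to $|v_j|>0$. Here $v_1,v_2\neq 0$ by condition \eqref{cond1}, using the lift to the orientation double cover if needed since $\zeta_j$ are realized by the straight curves $\xi_j$. Finally, take $K$ larger than $2H_{\max}$, $2/H_{\min}$ and these homology bounds.
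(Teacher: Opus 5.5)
Your skeleton is the paper's. You use a zippered-rectangle representation over a horizontal separatrix segment $I$ issued from a singularity, read the two bullets off upper and lower bounds on the rectangle heights, and get uniformity from the continuous dependence of these heights on $Y$ in a relatively compact neighborhood where the combinatorics is frozen. Your reason for the freezing is the right one: only finitely many separatrix segments of bounded length are involved, each hitting $I$ transversally in its interior. The paper instead shrinks $\mathcal{U}$ so that no $Y$ has saddle connections, which cannot be done, since surfaces with a vertical saddle connection are dense.

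Two of the justifications you add are false, however. First, the bullets do not hold for arbitrary paths, so you cannot establish that ``the horizontal parts of the path do not matter''.
A path zigzagging vertically inside one rectangle has unbounded vertical length and never meets $I$. Even an upward-monotone path can avoid $I$ forever: whenever it gets close below $I$, it slides along its dense horizontal leaf to a point just above $I$. The height above $I$ is not preserved across the vertical sides of the rectangles above the singularities, where neighboring rectangles are glued aligned from the top. Also, a path dipping across $I$ and back meets it twice with arbitrarily small vertical length.
The lemma has to be read for vertical segments, as the paper's proof and its use in \Cref{lem:decomp_traj} implicitly do. For those, the rectangle-height argument is immediate.

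Second, and more seriously, the bound $l_{\text{v}}(\zeta_j(Y))\geqslant|\mathrm{Im}\,\mathrm{hol}(\zeta_j(Y))|\approx|v_j|$ is not valid on a half-translation surface. A class in $H_1(Y,\RR)$ has no period. Its transfer lies in the invariant part $H_1^+$ of the orientation double cover, on which $\sqrt{q}$ integrates to $0$. Two curves in the same class of $Y$ have lifts differing by anti-invariant classes with nonzero periods, so passing to the double cover does not help.

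Concretely, take a closed curve that starts just below the part of the slit glued by translation and crosses it upward, emerging from the other translation-glued part, which is higher. It then climbs to the translate by $e_2$ of its starting point. Its lift to $S$ joins $\tilde z$ to $\tilde z+e_2$, so it represents $\zeta_2$. Yet it can be taken of vertical length $v_2$ minus the vertical component of the gluing translation.

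A uniform positive lower bound therefore requires controlling such jumps through the slit. That is what \eqref{cond3}--\eqref{cond4} and the argument of \Cref{lem:lower_bound_vert_length} are for; the paper's proof here only invokes the absence of horizontal saddle connections on $X$ and continuity. Alternatively, take $l_{\text{v}}(\zeta_j)$ to be the vertical length $v_j$ of the straight representative $\xi_j$, which is how \Cref{lem:lower_bound_vert_length} uses it. Under that reading your continuity argument together with \eqref{cond1} does suffice.
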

\begin{proof}
Let $\mathcal{U}$ a relatively compact neighborhood of $X$ in $\mathcal{Q}(-1^2,1^2)$, small enough so that every surface in $\mathcal{U}$, seen as a polygon with identified sides, has the same combinatorial data as $X$ (i.e. the sides are identified in the same order for every surface of $\mathcal{U}$). 
From this point of view, each singularity of a surface is a vertex of the polygon. Since all polygons have the same combinatorial type,
there is a natural identification of the singularities of a surface of $\mathcal{U}$ to the singularities of any other surface in $\mathcal{U}$.
Fix $s$ a singularity of $X$. Choose a horizontal separatrix going out of $s$ and let $I$ be a subsegment of this separatrix, starting at $s$. 
Follow forward and backward each vertical separatrix from each singularity (these two directions are the same when the singularity has conical angle $\pi$), stopping at the first intersection with $I$. We get a finite collection of points: $s_1, \dots, s_n$ (in this order when we follow $I$ from $s$).
Reduce $I$ so that it starts at $s$ and ends at the last intersection $s_n$.
We can build a zippered rectangles representation of the surface above $I$ and the $s_i$ correspond to the base points between the rectangles and the points  between the identifications of the top of the rectangles (see \Cref{subsec:background_zip-rect} for background).

For every $Y\in\mathcal{U}$, denote by $s(Y)\in Y$ the singularity corresponding to $s\in X$. Up to replacing $\mathcal{U}$ by a smaller neighborhood of $X$, we can assume that every surface $Y\in\mathcal{U}$ has no horizontal or vertical saddle connection. Hence, the horizontal separatrix going out from $s(Y)$ is not a saddle connection. Then we can build a segment $I(Y)\subset Y$ in $Y$ as  we have built $I$ in $X$.

Consider the zippered rectangles presentation of $Y$. Up to considering a smaller neighborhood, we can assume that all the zippered rectangles in $\mathcal{U}$ have the same combinatorial data.
For each surface, there is an upper and a lower bound for the height of the rectangles, say $K(Y)$ and $K(Y)^{-1}$. Since 
these bounds depend continuously on the surface $Y$,
 the constant $K=\underset{Y\in\overline{\mathcal{U}}}{\max} \, K(Y) \in (0,\infty)$ is well defined and satisfies the first two points of the lemma, together with $I(Y)$.
Moreover, the curves $(\zeta_1,\zeta_2)$ are not horizontal since $X$ admits no horizontal saddle connection. Up to increasing $K$, we have also $K^{-1} <  l_{\text{v}}\left(\zeta_j(Y)\right) < K $ for every $Y\in\mathcal{U}$ and $j\in\{1,2\}$. 
\end{proof}

Denote by $(t_n)_{n\in\NN}$ the sequence of return times of $g_t.X$ in $\mathcal{U}$, with the convention $t_0 = 0$.
We highlight that the sequence $(t_n)$ is well defined and infinite because the orbit of $X$ is recurrent (we pick $X$ to be Masur generic).
We note that the segment $I$ can be taken as short as we want (up to increasing $K$). We choose it so that it is always possible to connect two points in $X$ by a path without intersecting $\xi_1$, $\xi_2$ nor $I$.

\begin{dfn}
\label{def:zetak}
For $k\in\NN$, we define 
$$
\zeta_i^{(k)} = A^{KZ}(t_k,X).\zeta_i . 
$$
\end{dfn}
In particular $\left(\zeta_1^{(0)},\zeta_2^{(0)}\right)=\left(\zeta_1,\zeta_2\right)$.

\begin{lem}\label{lem:len_zeta}
For any $k\in\NN$ and $j\in\{1,2\}$, 
$$K^{-1} e^{t_k} \leqslant l_{\text{v}}(\zeta_j^{(k)}) \leqslant K e^{t_k}.$$
\end{lem}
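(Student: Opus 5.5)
The plan is to read $\zeta_j^{(k)}$ as the homology class on $X$ obtained by pulling back, through the Teichmüller deformation, the class $\zeta_j(g_{t_k}.X)$ on the surface $g_{t_k}.X\in\mathcal{U}$. This pull-back is via the mapping class $\psi$ of \Cref{sec:background_KZ_geometric}, which returns $g_{t_k}.\tilde X$ to the fundamental domain near $X$. Once this identification is fixed, the estimate reduces to two facts: \Cref{lem:cte_K} applied on the surface $g_{t_k}.X$, and the way $g_t$ rescales vertical lengths.

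First, since $g_{t_k}.X\in\mathcal{U}$ by definition of the return times $(t_n)$, \Cref{lem:cte_K} gives that the shortest representative (in vertical length) of $\zeta_j(g_{t_k}.X)$, measured in the flat metric of $g_{t_k}.X$, has vertical length in $[K^{-1},K]$. Second, the matrix $g_{t}$ multiplies the imaginary part of every holonomy, and of every tangent vector, by $e^{-t}$. Consequently, for any path $c$ on $X$ we have $l_{\text{v}}^{g_tX}(g_t c)=e^{-t}\,l_{\text{v}}^{X}(c)$, where $g_t c$ denotes the same topological path viewed on the deformed surface (affine map with derivative $g_t$). Pulling the curve realizing $\zeta_j(g_{t_k}.X)$ back to $X$ by the inverse affine map $g_{-t_k}$ therefore yields a curve on $X$ in the class $\zeta_j^{(k)}$ with vertical length at most $Ke^{t_k}$. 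Conversely, any curve on $X$ in the class $\zeta_j^{(k)}$ pushes forward to a curve in $\zeta_j(g_{t_k}.X)$ whose vertical length is $e^{-t_k}$ times its own. This vertical length is at least $K^{-1}$, so the curve has vertical length at least $K^{-1}e^{t_k}$. Since the minimum over the class is realized (or approached) on both sides, the two inequalities together give the claim.

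The step I expect to be the main obstacle is the sign convention. As written, $g_t$ contracts the vertical direction by $e^{-t}$. Hence vertical lengths on $X$ of curves that are bounded on $g_{t_k}.X$ grow like $e^{t_k}$, which is the direction the lemma asserts, and the bookkeeping should confirm this. The other point to check is that the identification $H_1(g_{t_k}.X)\simeq H_1(X)$ via the mapping class $\psi$ matches the affine-map identification used above. The cocycle $A^{KZ}(t_k,X)$ is exactly the composite of these two identifications, so shortest representatives correspond bijectively and vertical lengths scale uniformly.

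Finally, for the minimum to be meaningful on a half-translation surface, vertical length should be computed with $|\mathrm{Im}\,\gamma'|$. This quantity is invariant under $z\mapsto -z+c$, so it is well defined despite the half-translation structure. No orientation issue therefore arises.
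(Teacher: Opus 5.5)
Your proof is correct and follows the paper's route. The paper's one-line proof is the scaling identity $l_{\text{v}}(\zeta_j^{(k)}) = e^{t_k}\, l_{\text{v}}(\zeta_j)$, where the right-hand length is implicitly measured on $g_{t_k}.X\in\mathcal{U}$ and bounded by \Cref{lem:cte_K}; your version makes explicit where each length is measured, and why the infima over the classes scale exactly, points the paper leaves implicit.
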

\begin{proof}
This follows directly from the fact that
 $
 l_{\text{v}}\left(\zeta_j^{(k)}\right) = e^{t_k}  l_{\text{v}}\left(\zeta_j\right).
 $
\end{proof}

\subsection{Decomposition of the cycle}\label{subsec:decompo}

Before being able to prove the inequalities on the coefficients of the decomposition of \Cref{lem:decomp_traj}, we need to estimate the vertical length of a curve depending on its coefficients (in homology).
\begin{lem}\label{lem:lower_bound_vert_length}
    There exist $c>0$ such that for every $\gamma \in H_1(X,\RR) $ with coordinates $(n_1,n_2)$ in the basis $(\zeta_1,\zeta_2)$, we have
    $$ l_{\textrm{v}}(\gamma) \geqslant c\sum_{i=1}^{2} |n_{i}|l_{\textrm{v}}(\zeta_i).$$

    This constant $c$ can be chosen uniformly for all surfaces in $\mathcal{U}$ of \Cref{lem:cte_K}.
\end{lem}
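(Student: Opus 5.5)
The plan is to treat $l_{\textrm{v}}(\gamma)$ as the vertical length of the shortest representative, and to bound it below by a linear form in the coordinates that detects each sign pattern of $(n_1,n_2)$. I will use the period map and the geometry of the slit torus on $\mathcal{O}_\varepsilon$.

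\textbf{Easy sign pattern.} Any closed curve $\sigma$ in class $\gamma$ satisfies $l_{\textrm{v}}(\sigma)\geqslant \left|\int_\sigma \textrm{d}y\right|$. When the class lifts to the torus $\RR^2/\Lambda$, this integral equals the vertical component of the lattice vector $n_1u_1+n_2u_2$, which is $n_1v_1+n_2v_2$. Condition (\ref{cond1}) makes $v_1,v_2>0$. So when $n_1,n_2$ have the same sign we get
$$l_{\textrm{v}}(\gamma)\geqslant |n_1|v_1+|n_2|v_2 .$$
Since $l_{\textrm{v}}(\zeta_i)$ is comparable to $v_i$ (it is at most $v_i$, and $\xi_i$ avoids the slit by (\ref{cond2})), this gives the claim with $c=1$.

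\textbf{Opposite signs.} Here $\int \textrm{d}y$ can cancel, and a curve can also pass through the slit, where the half-translation identification reverses orientation. Reversing orientation lets the curve go "down" for free in the $\textrm{d}y$ count, but not in $|{\textrm{d}y}|$. I would therefore use a topological argument instead. Fix a horizontal-ish level structure: in the parallelogram fundamental domain, the region outside the slit's vertical shadow is a genuine torus piece. A curve in class $(n_1,n_2)$ must cross a closed curve dual to $\zeta_2$ at least $|n_1|$ times, and one dual to $\zeta_1$ at least $|n_2|$ times (algebraic intersection bounds geometric intersection).

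I would choose these dual curves as the sides of the fundamental domain, namely $\xi_2$ and $\xi_1$. Between two consecutive crossings of the same side, the curve travels across a full fundamental domain. Leaving through the slit only shortcuts vertically by $v_3$ or $v_4$ (the rotation parts) or by a translation of vertical size bounded by $v_3+v_4$. Condition (\ref{cond4}), $\max(v_3,v_4)+\varepsilon<\min(v_1,v_2)$, should then guarantee that each such traversal costs vertical length at least $\varepsilon$. With a suitable decomposition into segments, this gives $l_{\textrm{v}}(\gamma)\geqslant \varepsilon'(|n_1|+|n_2|)$. Dividing by $\max_i l_{\textrm{v}}(\zeta_i)\leqslant \max(v_1,v_2)$ yields the claim with $c=\varepsilon'/\max(v_1,v_2)$.

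\textbf{Main obstacle and uniformity.} The hard step is making the traversal estimate rigorous. I must show that a crossing from one copy of a side of the fundamental domain to the next forces vertical length at least $\min(v_1,v_2)-\max(v_3,v_4)$, accounting for the rotation identifications on the slit. I would first try a case analysis by which piece of the slit (the translated part, or one of the two self-glued parts) the segment uses. The translated part's vertical offset is bounded using (\ref{cond3}). Finally, all constants depend continuously on $(u_1,u_2,h_3,h_4,v_3,v_4)$, and the inequalities (\ref{cond1})--(\ref{cond4}) are open. Shrinking $\mathcal{U}$ inside $\mathcal{O}_\varepsilon$ if necessary, $c$ is then uniform on $\overline{\mathcal{U}}$ by compactness.
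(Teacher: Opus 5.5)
Your proposal has a genuine gap, located exactly at the step you flag as the main obstacle, and that step cannot be repaired in the generality you set up.

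You take $l_{\textrm{v}}(\gamma)$ to be the infimum over \emph{all} representatives of the class. You then claim that every crossing of a side of the fundamental domain costs vertical length at least $\varepsilon$. For non-vertical curves this is false. By (\ref{cond1}) the sides $\xi_1,\xi_2$ are not horizontal, so a horizontal segment runs from one lattice translate of a side to the next at zero vertical cost. Likewise, a horizontal path meeting the slit changes height for free, whether it passes through the translated part or is sent back by a self-glued part. Your ``easy'' case fails for the same reason. The surface $X$ only carries $|\mathrm{d}y|$, not $\mathrm{d}y$, and a curve crossing the slit does not project to a curve on $\RR^2/\Lambda$, so nothing forces $\int_\sigma \mathrm{d}y=n_1v_1+n_2v_2$. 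In fact the bound $l_{\textrm{v}}(\gamma)\geqslant\varepsilon'(|n_1|+|n_2|)$ you aim for is false for arbitrary representatives. For almost every $X$, $g_{-s}X$ returns infinitely often to a compact set, where it has an integral homology basis of loops of bounded length. Since $g_{-s}$ multiplies vertical lengths by $e^{s}$, pulling these loops back gives nonzero classes in $H_1(X,\ZZ)$ represented by loops of vertical length $O(e^{-s})\to 0$.

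What makes the lemma true, and what the paper actually needs in \Cref{lem:decomp_traj}, is that the curves in question are pieces of the vertical trajectory closed by a horizontal arc of $I^{(k)}$. Here $l_{\textrm{v}}$ is the vertical length of that specific curve, with $l_{\textrm{v}}(\zeta_i)=v_i$ given by the straight representatives $\xi_i$. The paper lifts such a curve to the translation double cover $\tilde S$: two copies of $\RR^2\setminus\mathcal{P}$, with fundamental domains indexed by $\ZZ^2\times\ZZ/2\ZZ$. There the vertical direction is oriented, so the lift is monotone in each sheet. The direction reversals at the self-glued parts, which would break your crossing count in the plane, simply become sheet changes. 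By (\ref{cond1}), a vertical path moving in a fixed direction only enters domains where both lattice coordinates move the same way, so there is no cancellation within a sheet. After translating the pieces in each sheet so that they chain up, they end in domains $(p,q,0)$ and $(p',q',1)$ with $n_1=p+p'$ and $n_2=q+q'$. The slit is crossed at most once per domain, with a vertical jump of at most $\delta=\max(v_3,v_4)$. So the vertical length is at least $(|p|+|p'|)(v_1-\delta)+(|q|+|q'|)(v_2-\delta)$, and (\ref{cond4}) turns this into the claim. Your domain-counting idea and your use of (\ref{cond4}) fit into this scheme once you restrict to such vertical curves.
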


\begin{proof}
    Consider the infinite half-translation surface $S$ covering of $X$. It is $\RR^2\setminus\mathcal{P}$ with identifications as in $X$, see \Cref{fig:S}. In order to work with an orientable vertical foliation, we define $\tilde S$, a translation cover of $S$, as follows. We take two copies of $\RR^2\setminus\mathcal{P}$, one being rotated by half a turn compared to the other. We call the first one the positive copy and the second one the negative copy. A path going up in the positive (resp. negative) copy would project in the plane to a trajectory going up (resp. down) for the wind-tree tilling billiards. Then, we identify the slits as in \Cref{fig:S_tilde}. We get an infinite translation surface. 
Each of these slits is in a parallelogram shaped fundamental domain given by $u_1$ and $u_2$. The set of these fundamental domains is in bijection with $\ZZ^2\times\ZZ/2\ZZ$: each element of $\ZZ/2\ZZ$ corresponds to a copy of $S$, and the element of $\ZZ^2$ is given as follows. Assign $(0,0)$ to one fundamental domain. 
Let $\gamma$ be a loop on $X$ and $z\in\ZZ^2$ be its algebraic intersection number with $\zeta_1,\zeta_2$, let  $\tilde\gamma$ be a lift of $\gamma$ to $\tilde S$ that starts in the fundamental domain $(0,0)$. Assign $z$ to the fundamental domain where $\tilde\gamma$ stops.  See \Cref{fig:bij_fd_Z2}.
     In other words, the surface $\tilde S$ is a $\ZZ^2\times\ZZ/2\ZZ$ ramified cover of $X$ (the ramification points are exactly the singularities).

         \begin{figure}[h]
        \centering
             \def\eps{0.03}
% corners of the slit
\def\xa{3.9}
\def\ya{1.3}
\def\xb{-1}
\def\yb{3}
\def\xp{-0.72}
\def\xpp{3.62}
% basis of Lamda
\def\ux{4.4} % x coord of vector u in \Lambda
\def\uy{0.7} % y coord of vector u in \Lambda
\def\vx{-2.5} % x coord of vector v in \Lambda
\def\vy{3.4} % x coord of vector v in \Lambda
\def\lx{0.6}
\def\ly{0.4}
\begin{tikzpicture}[>=stealth,x=.35cm,y=.35cm]
\foreach \k in {0,1,2}
\foreach \l in {0,1,2}
{\begin{scope}[shift={(\k*\ux+\l*\vx,\k*\uy+\l*\vy)}]
% slit
\draw [color=colleft,shift={(0,-\eps)}, line width= 1pt] (-1,3) -- (1,1.80612);
\draw [line width=0.75pt] (0,2.4-\eps-0.15)--(0,2.4-\eps+0.05);
\draw [color=colright,shift={(0,+\eps)}, line width= 1pt] (1.9,2.49388) -- (3.9,1.3);
\draw [line width=0.75pt] (2.9,1.9+\eps-0.05)--(2.9,1.9+\eps+0.15);
\draw [color=colmid,shift={(0,+\eps)}, line width= 1pt] (-1,3) -- (1.9,2.49388);
\draw [color=colmid,shift={(0,-\eps)}, line width= 1pt] (1,1.80612) -- (3.9,1.3);
\fill [gray!25] (\xa,\ya) -- (1,1.80612) -- (\xb,\yb) -- (1.9,2.49388) -- cycle;
\end{scope}
}

\draw [densely dotted,shift={(1.5*\vx,+1.5*\vy)}] (0.5,0.3) --++ (-0.3*\ux,-0.3*\uy); %dots on the left
\draw [densely dotted,shift={(3*\ux+1.5*\vx,3*\uy+1.5*\vy)}] (0.5,0.3) --++ (0.3*\ux,0.3*\uy); %dots on the right
\draw [densely dotted,shift={(1.5*\ux,+1.5*\uy)}] (0.5,0.3) --++ (-0.3*\vx,-0.3*\vy); %dots below
\draw [densely dotted,shift={(3*\vx+1.5*\ux,3*\vy+1.5*\uy)}] (0.5,0.3) --++ (0.3*\vx,0.3*\vy); %dots above
\end{tikzpicture}
        \caption{The infinite half-translation surface $S$ covering $X$}\label{fig:S}
{\small Within each cut, the top side is identified by translation with the bottom side, the left side is identified with itself by a half-translation and the right side as well. The gray part is cut off the plane, it does not belong to the surface $S$. }
    \end{figure}
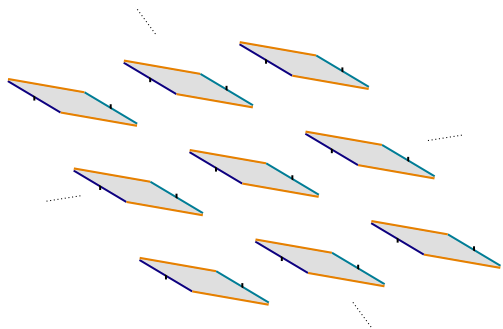
        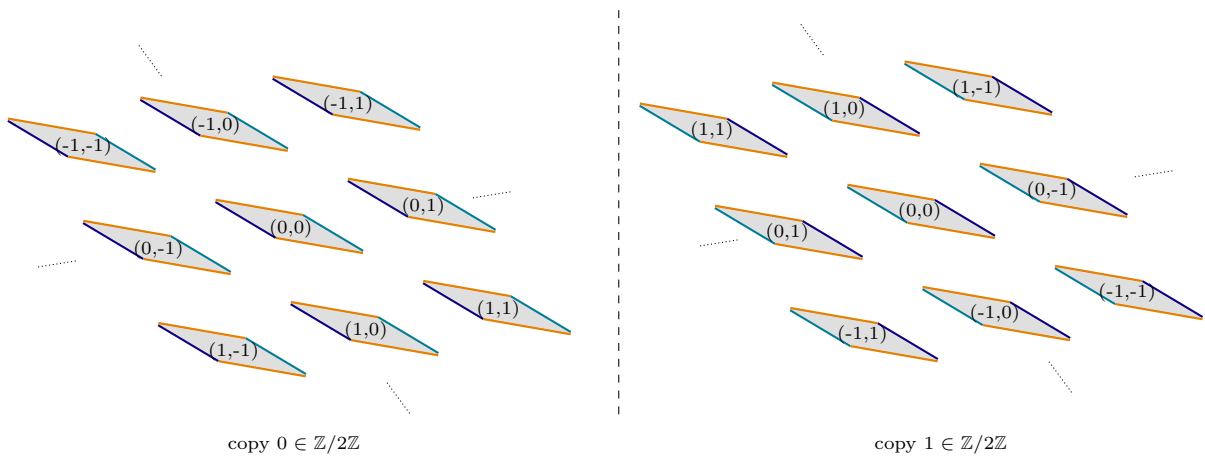
\begin{figure}[h]
        \centering
             %\definecolor{xdxdff}{rgb}{0.49,0.49,1}
\def\eps{0.03}
% corners of the slit
\def\xa{3.9}
\def\ya{1.3}
\def\xb{-1}
\def\yb{3}
\def\xp{-0.72}
\def\xpp{3.62}
% basis of Lamda
\def\ux{4.4} % x coord of vector u in \Lambda
\def\uy{0.7} % y coord of vector u in \Lambda
\def\vx{-2.5} % x coord of vector v in \Lambda
\def\vy{3.4} % x coord of vector v in \Lambda
\def\lx{0.6}
\def\ly{0.4}
\begin{tikzpicture}[>=stealth,x=.4cm,y=.4cm]

\foreach \k in {0,1,2}
\foreach \l in {0,1,2}
{\begin{scope}[shift={(\k*\ux+\l*\vx,\k*\uy+\l*\vy)}]
% slit
\draw [color=colleft,shift={(0,-\eps)}, line width= 1pt] (-1,3) -- (1,1.80612);
\draw [color=colright,shift={(0,+\eps)}, line width= 1pt] (1.9,2.49388) -- (3.9,1.3);
\draw [color=colmid,shift={(0,+\eps)}, line width= 1pt] (-1,3) -- (1.9,2.49388);
\draw [color=colmid,shift={(0,-\eps)}, line width= 1pt] (1,1.80612) -- (3.9,1.3);
\fill [gray!25] (\xa,\ya) -- (1,1.80612) -- (\xb,\yb) -- (1.9,2.49388) -- cycle;
\end{scope}
}

\foreach \k in {-1,0,1}%,3,4,5,6,7,8,9}
\foreach \l in {1}%,3,4,5,6,7,8,9}
{\begin{scope}[shift={(\k*\ux+\ux+\vx+\l*\vx,\k*\uy+\uy+\vy+\l*\vy)}]
\draw (1.5,2.1) node {{\tiny(-\l,\k)}};
\end{scope}
}
\foreach \k in {-1,0,1}
\foreach \l in {0,1}
{\begin{scope}[shift={(\k*\ux+\ux+\vx-\l*\vx,\k*\uy+\uy+\vy-\l*\vy)}]
\draw (1.5,2.1) node {{\tiny(\l,\k)}};
\end{scope}
}

\draw [densely dotted,shift={(1.5*\vx,+1.5*\vy)}] (0,0) --++ (-0.3*\ux,-0.3*\uy); %dots on the left
\draw [densely dotted,shift={(3*\ux+1.5*\vx,3*\uy+1.5*\vy)}] (0,0) --++ (0.3*\ux,0.3*\uy); %dots on the right
\draw [densely dotted,shift={(1.5*\ux,+1.5*\uy)}] (0,0) --++ (-0.3*\vx,-0.3*\vy); %dots below
\draw [densely dotted,shift={(3*\vx+1.5*\ux,3*\vy+1.5*\uy)}] (0,0) --++ (0.3*\vx,0.3*\vy); %dots above

\draw[dashed] (3.25*\ux,0) --++ (0,4*\vy);
% copy for the translation cover
\begin{scope}[shift={(2*\ux,-2*\uy)},rotate around={180:(3*\ux+1.5*\vx,3*\uy+1.5*\vy))}]
\foreach \k in {0,1,2}
\foreach \l in {0,1,2}
{\begin{scope}[shift={(\k*\ux+\l*\vx,\k*\uy+\l*\vy)}]
% slit
\draw [color=colleft,shift={(0,-\eps)}, line width= 1pt] (-1,3) -- (1,1.80612);
\draw [color=colright,shift={(0,+\eps)}, line width= 1pt] (1.9,2.49388) -- (3.9,1.3);
\draw [color=colmid,shift={(0,+\eps)}, line width= 1pt] (-1,3) -- (1.9,2.49388);
\draw [color=colmid,shift={(0,-\eps)}, line width= 1pt] (1,1.80612) -- (3.9,1.3);
\fill [gray!25] (\xa,\ya) -- (1,1.80612) -- (\xb,\yb) -- (1.9,2.49388) -- cycle;
\end{scope}
}

\foreach \k in {-1,0,1}
\foreach \l in {1}
{\begin{scope}[shift={(\k*\ux+\ux+\vx+\l*\vx,\k*\uy+\uy+\vy+\l*\vy)}]
\draw (1.5,2.15) node {{\tiny(-\l,\k)}};
\end{scope}
}
\foreach \k in {-1,0,1}
\foreach \l in {0,1}
{\begin{scope}[shift={(\k*\ux+\ux+\vx-\l*\vx,\k*\uy+\uy+\vy-\l*\vy)}]
\draw (1.5,2.15) node {{\tiny(\l,\k)}};
\end{scope}
}

\draw [densely dotted,shift={(1.5*\vx,+1.5*\vy)}] (0,0) --++ (-0.3*\ux,-0.3*\uy); %dots on the left
\draw [densely dotted,shift={(3*\ux+1.5*\vx,3*\uy+1.5*\vy)}] (0,0) --++ (0.3*\ux,0.3*\uy); %dots on the right
\draw [densely dotted,shift={(1.5*\ux,+1.5*\uy)}] (0,0) --++ (-0.3*\vx,-0.3*\vy); %dots below
\draw [densely dotted,shift={(3*\vx+1.5*\ux,3*\vy+1.5*\uy)}] (0,0) --++ (0.3*\vx,0.3*\vy); %dots above
\end{scope}

\draw (3.5,-1) node {{\tiny copy $0\in\ZZ/2\ZZ$}};
\draw (25,-1) node {{\tiny copy $1\in\ZZ/2\ZZ$}};
\end{tikzpicture}
        \caption{The translation cover $\tilde S$ of $S$}\label{fig:S_tilde}
        {\small The surface $\tilde S$ is obtained as the gluing of two planes with each countably many cuts. The cuts of each plane are in bijection with $\ZZ^2$. The bottom side of each cut if identified by translation to the top side of the same cut. The left side of the cut $z\in\ZZ^ 2$ on one plane is identified by translation to the right side of the cut $z$ in the other plane. }
    \end{figure}

    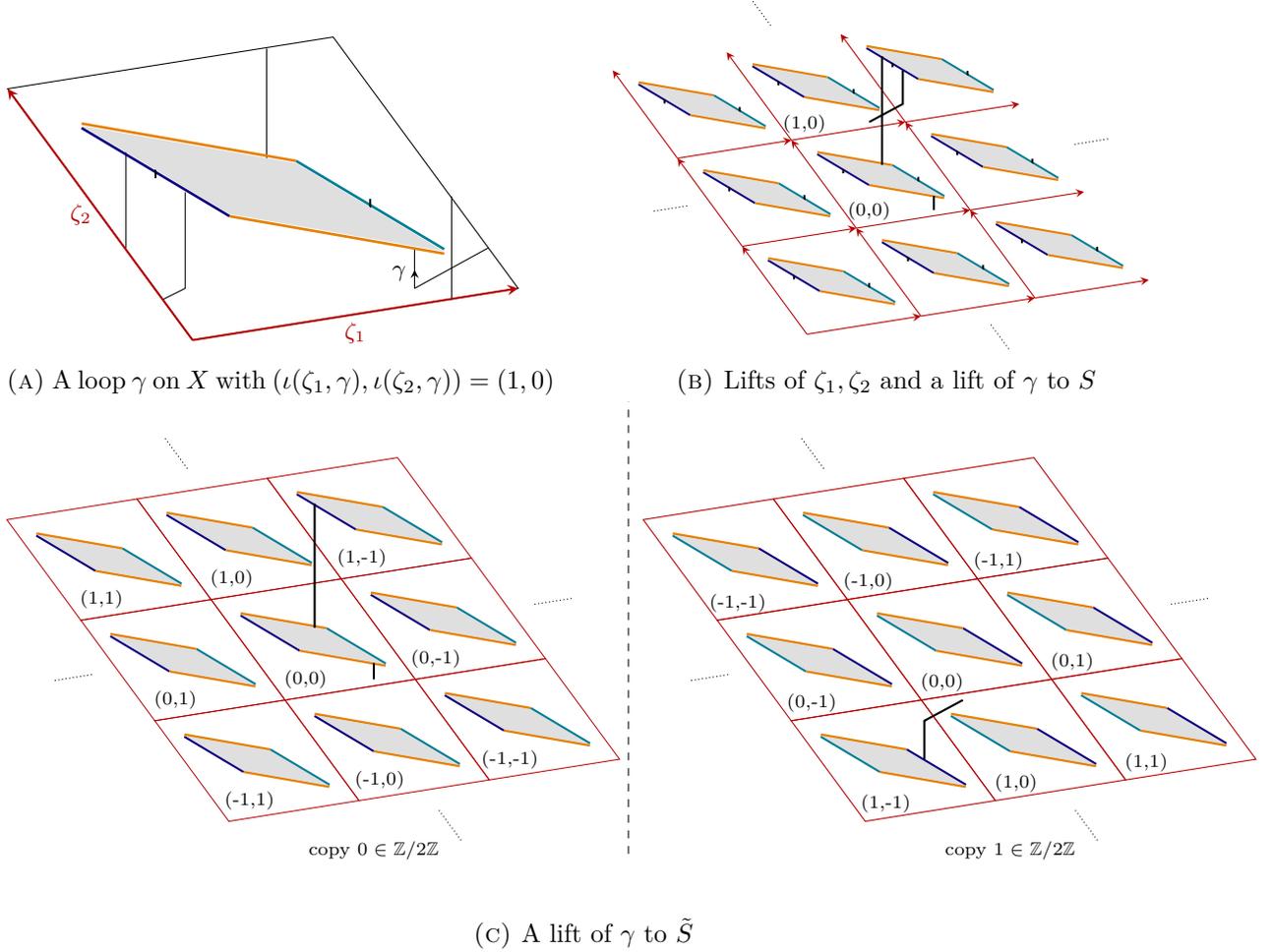
\begin{figure}[h]
        \centering
        \begin{subfigure}[b]{0.475\textwidth}
            \definecolor{colangle}{rgb}{0.7,0,0}
\def\eps{0.03}
% corners of the slit
\def\xa{3.9}
\def\ya{1.3}
\def\xb{-1}
\def\yb{3}
%vectors defining the slit
\def\slopeTopx{2.9}
\def\slopeTopy{0.50612}
\def\slopeLeftx{-2}
\def\slopeLefty{1.19388}
%coordinate of the left corner of the fd
\def\cx{0.5}
\def\cy{0.1}
\def\xp{-0.72}
\def\xpp{3.62}
% basis of Lamda
\def\ux{4.4} % x coord of vector u in \Lambda
\def\uy{0.7} % y coord of vector u in \Lambda
\def\vx{-2.5} % x coord of vector v in \Lambda
\def\vy{3.4} % x coord of vector v in \Lambda
\begin{tikzpicture}[>=stealth,x=1cm,y=1cm]

%traj
\draw[->] (\cx,\cy) ++ (3,0.7) --++ (0,0.25); 
\draw (\cx,\cy) ++ (3,0.9) --++ (0,0.35) ++ (\slopeLeftx,\slopeLefty) --++ (0,1.5) ++  (-\vx,-\vy) --++ (0,1.4) + + (-\ux,-\uy) --++ (0,1.3) ;
\draw (0.4,2.1) --++ (0,-1.3) --++ (-0.3,-0.15) ++ (\ux,\uy) -- (3+\cx,0.7+\cy);

%torus
\draw (\cx,\cy) --++ (\ux,\uy) --++ (\vx,\vy) --++ (-\ux,-\uy) -- cycle;

% slit
\draw [color=colleft,shift={(0,-\eps)}, line width= 1pt] (-1,3) -- (1,1.8061);
\draw [line width=0.75pt] (0,2.4-\eps-0.08)--(0,2.4-\eps+0.05);
\draw [color=colright,shift={(0,+\eps)}, line width= 1pt] (1.9,2.49388) -- (3.9,1.3);
\draw [line width=0.75pt] (2.9,1.9+\eps-0.05)--(2.9,1.9+\eps+0.08);
\draw [color=colmid,shift={(0,+\eps)}, line width= 1pt] (-1,3) -- (1.9,2.49388);
\draw [color=colmid,shift={(0,-\eps)}, line width= 1pt] (1,1.80612) -- (3.9,1.3);
\fill [gray!25] (\xa,\ya) -- (1,1.80612) -- (\xb,\yb) -- (1.9,2.49388) -- cycle;

\draw [colangle,thick,->] (\cx,\cy) --++ (\ux,\uy);
\draw [colangle,thick,->] (\cx,\cy) --++ (\vx,\vy);
\begin{footnotesize}
    \draw [colangle,below] (\cx+\ux/2,\cy+\uy/2) node {$\zeta_1$};
    \draw [colangle,left] (\cx+\vx/2,\cy+\vy/2) node {$\zeta_2$};
    \draw [left] (\cx+3,\cy+0.9) node {$\gamma$};
\end{footnotesize}
\end{tikzpicture}
            \caption{A loop $\gamma$ on $X$ with $(\iota(\zeta_1,\gamma),\iota(\zeta_2,\gamma))=(1,0)$}\label{fig:gamma_on_X}
        \end{subfigure}
        \hfill
        \begin{subfigure}[b]{0.475\textwidth}
            \definecolor{colangle}{rgb}{0.7,0,0}
\def\eps{0.03}
% corners of the slit
\def\xa{3.9}
\def\ya{1.3}
\def\xb{-1}
\def\yb{3}
\def\xp{-0.72}
\def\xpp{3.62}
%coordinate of the left corner of the fd
\def\cx{0.5}
\def\cy{0.1}
% basis of Lamda
\def\ux{4.4} % x coord of vector u in \Lambda
\def\uy{0.7} % y coord of vector u in \Lambda
\def\vx{-2.5} % x coord of vector v in \Lambda
\def\vy{3.4} % x coord of vector v in \Lambda
\def\lx{0.6}
\def\ly{0.4}
%vectors defining the slit
\def\slopeTopx{2.9}
\def\slopeTopy{0.50612}
\def\slopeLeftx{-2}
\def\slopeLefty{1.19388}
\begin{tikzpicture}[>=stealth,x=.35cm,y=.35cm]

\foreach \k in {0,1,2}
\foreach \l in {0,1,2}
{\begin{scope}[shift={(\k*\ux+\l*\vx,\k*\uy+\l*\vy)}]
\draw [colangle,->] (\cx,\cy) --++ (\ux,\uy) ;
\draw [colangle,->] (\cx,\cy) --++ (\vx,\vy);
% slit
\draw [color=colleft,shift={(0,-\eps)}, line width= 1pt] (-1,3) -- (1,1.80612);
\draw [line width=0.75pt] (0,2.4-\eps-0.15)--(0,2.4-\eps+0.05);
\draw [color=colright,shift={(0,+\eps)}, line width= 1pt] (1.9,2.49388) -- (3.9,1.3);
\draw [line width=0.75pt] (2.9,1.9+\eps-0.05)--(2.9,1.9+\eps+0.15);
\draw [color=colmid,shift={(0,+\eps)}, line width= 1pt] (-1,3) -- (1.9,2.49388);
\draw [color=colmid,shift={(0,-\eps)}, line width= 1pt] (1,1.80612) -- (3.9,1.3);
\fill [gray!25] (\xa,\ya) -- (1,1.80612) -- (\xb,\yb) -- (1.9,2.49388) -- cycle;
\end{scope}
}

\begin{scope}[shift={(\ux+\vx,\uy+\vy)}]
%traj
\draw [thick] (\cx,\cy) ++ (3,0.7) --++ (0,0.55) ++ (\slopeLeftx,\slopeLefty) --++ (0,1.5) --++ (0,1.4) --++ (0,1.3) ;
\draw [thick,shift={(\ux+\vx,\uy+\vy)}] (0.4,2.1) --++ (0,-1.3)  -- (3+\cx-\ux,0.7+\cy-\uy);
%\draw [red,shift={(\ux+\vx,\uy+\vy)}] (0.4,2.1) ++ (0,-1.3) ++ (-0.3,-0.15) --++(1,0);
\draw (1,0.75) node {{\tiny(0,0)}};
\draw (1+\vx,0.75+\vy) node {{\tiny(1,0)}};
\end{scope}

\draw [densely dotted,shift={(1.5*\vx,+1.5*\vy)}] (0.5,0.1) ++ (-0.2*\ux,-0.2*\uy) --++ (-0.3*\ux,-0.3*\uy); %dots on the left
\draw [densely dotted,shift={(3*\ux+1.5*\vx,3*\uy+1.5*\vy)}] (0.5,0.1) ++ (0.2*\ux,0.2*\uy) --++ (0.3*\ux,0.3*\uy); %dots on the right
\draw [densely dotted,shift={(1.5*\ux,+1.5*\uy)}] (0.5,0.1) ++ (-0.2*\vx,-0.2*\vy) --++ (-0.3*\vx,-0.3*\vy); %dots below
\draw [densely dotted,shift={(3*\vx+1.5*\ux,3*\vy+1.5*\uy)}] (0.5,0.1) ++ (0.2*\vx,0.2*\vy) --++ (0.3*\vx,0.3*\vy); %dots above

\end{tikzpicture}
            \caption{Lifts of $\zeta_1,\zeta_2$ and a lift of $\gamma$ to $S$}\label{fig:gamma_on_S}
        \end{subfigure}
        
        \begin{subfigure}[b]{1\textwidth}
             \definecolor{colangle}{rgb}{0.7,0,0}
\def\eps{0.03}
% corners of the slit
\def\xa{3.9}
\def\ya{1.3}
\def\xb{-1}
\def\yb{3}
\def\xp{-0.72}
\def\xpp{3.62}
%coordinate of the left corner of the fd
\def\cx{0.5}
\def\cy{0.1}
% basis of Lamda
\def\ux{4.4} % x coord of vector u in \Lambda
\def\uy{0.7} % y coord of vector u in \Lambda
\def\vx{-2.5} % x coord of vector v in \Lambda
\def\vy{3.4} % x coord of vector v in \Lambda
\def\lx{0.6}
\def\ly{0.4}
%vectors defining the slit
\def\slopeTopx{2.9}
\def\slopeTopy{0.50612}
\def\slopeLeftx{-2}
\def\slopeLefty{1.19388}
\begin{tikzpicture}[>=stealth,x=.4cm,y=.4cm]
\foreach \k in {-1,0,1}
\foreach \l in {-1,0,1}
{\begin{scope}[shift={(\k*\ux+\l*\vx,\k*\uy+\l*\vy)}]
\draw [colangle,very thin] (\cx,\cy) --++ (\ux,\uy) --++ (\vx,\vy) --++ (-\ux,-\uy) -- cycle;
% slit
\draw [color=colleft,shift={(0,-\eps)}, line width= 1pt] (-1,3) -- (1,1.80612);
\draw [color=colright,shift={(0,+\eps)}, line width= 1pt] (1.9,2.49388) -- (3.9,1.3);
\draw [color=colmid,shift={(0,+\eps)}, line width= 1pt] (-1,3) -- (1.9,2.49388);
\draw [color=colmid,shift={(0,-\eps)}, line width= 1pt] (1,1.80612) -- (3.9,1.3);
\fill [gray!25] (\xa,\ya) -- (1,1.80612) -- (\xb,\yb) -- (1.9,2.49388) -- cycle;
\end{scope}
}

\foreach \k in {1}
\foreach \l in {-1,0,1}
{\begin{scope}[shift={(\k*\ux+\l*\vx,\k*\uy+\l*\vy)}]
\draw (1.2,0.8) node {{\tiny(\l,-1)}};
\end{scope}
}
\foreach \k in {0}
\foreach \l in {-1,0,1}
{\begin{scope}[shift={(\k*\ux+\l*\vx,\k*\uy+\l*\vy)}]
\draw (1.2,0.8) node {{\tiny(\l,\k)}};
\end{scope}
}
\foreach \k in {-1}
\foreach \l in {-1,0,1}
{\begin{scope}[shift={(\k*\ux+\l*\vx,\k*\uy+\l*\vy)}]
\draw (1.2,0.8) node {{\tiny(\l,1)}};
\end{scope}
}

\draw [densely dotted,shift={(0.5*\vx-\ux,+0.5*\vy-\uy)}] (0.5,0.1) ++ (-0.2*\ux,-0.2*\uy) --++ (-0.3*\ux,-0.3*\uy); %dots on the left
\draw [densely dotted,shift={(2*\ux+0.5*\vx,2*\uy+0.5*\vy)}] (0.5,0.1) ++ (0.2*\ux,0.2*\uy) --++ (0.3*\ux,0.3*\uy); %dots on the right
\draw [densely dotted,shift={(0.5*\ux-\vx,+0.5*\uy-\vy)}] (0.5,0.1) ++ (-0.2*\vx,-0.2*\vy) --++ (-0.3*\vx,-0.3*\vy); %dots below
\draw [densely dotted,shift={(2*\vx+0.5*\ux,2*\vy+0.5*\uy)}] (0.5,0.1) ++ (0.2*\vx,0.2*\vy) --++ (0.3*\vx,0.3*\vy); %dots above

\draw (3.5,-5) node {{\tiny copy $0\in\ZZ/2\ZZ$}};

%traj first part
\draw [thick] (\cx,\cy) ++ (3,0.7) --++ (0,0.55) ++ (\slopeLeftx,\slopeLefty) --++ (0,1.5) --++ (0,1.4) --++ (0,1.3) ;

\draw[dashed] (2.75*\ux,-1.5*\vy) --++ (0,4.5*\vy);

% copy for the translation cover
\begin{scope}[rotate around={180:(\cx+0.5*\ux+0.5*\vx,\cy+0.5*\uy+0.5*\vy))},shift={(\cx-5*\ux,0)}]
\foreach \k in {-1,0,1}
\foreach \l in {-1,0,1}
{\begin{scope}[shift={(\k*\ux+\l*\vx,\k*\uy+\l*\vy)}]
\draw [colangle,thin] (\cx,\cy) --++ (\ux,\uy) --++ (\vx,\vy) --++ (-\ux,-\uy) -- cycle;
% slit
\draw [color=colleft,shift={(0,-\eps)}, line width= 1pt] (-1,3) -- (1,1.80612);
\draw [color=colright,shift={(0,+\eps)}, line width= 1pt] (1.9,2.49388) -- (3.9,1.3);
\draw [color=colmid,shift={(0,+\eps)}, line width= 1pt] (-1,3) -- (1.9,2.49388);
\draw [color=colmid,shift={(0,-\eps)}, line width= 1pt] (1,1.80612) -- (3.9,1.3);
\fill [gray!25] (\xa,\ya) -- (1,1.80612) -- (\xb,\yb) -- (1.9,2.49388) -- cycle;
\end{scope}
}

\draw [densely dotted,shift={(0.5*\vx-\ux,+0.5*\vy-\uy)}] (0.5,0.1) ++ (-0.2*\ux,-0.2*\uy) --++ (-0.3*\ux,-0.3*\uy); %dots on the right
\draw [densely dotted,shift={(2*\ux+0.5*\vx,2*\uy+0.5*\vy)}] (0.5,0.1) ++ (0.2*\ux,0.2*\uy) --++ (0.3*\ux,0.3*\uy); %dots on the lett
\draw [densely dotted,shift={(0.5*\ux-\vx,+0.5*\uy-\vy)}] (0.5,0.1) ++ (-0.2*\vx,-0.2*\vy) --++ (-0.3*\vx,-0.3*\vy); %dots above
\draw [densely dotted,shift={(2*\vx+0.5*\ux,2*\vy+0.5*\uy)}] (0.5,0.1) ++ (0.2*\vx,0.2*\vy) --++ (0.3*\vx,0.3*\vy); %dots below

\foreach \k in {1}
\foreach \l in {-1,0,1}
{\begin{scope}[shift={(\k*\ux+\l*\vx,\k*\uy+\l*\vy)}]
\draw (1.7,3.6) node {{\tiny(\l,-1)}};
\end{scope}
}
\foreach \k in {0}
\foreach \l in {-1,0,1}
{\begin{scope}[shift={(\k*\ux+\l*\vx,\k*\uy+\l*\vy)}]
\draw (1.7,3.6) node {{\tiny(\l,\k)}};
\end{scope}
}
\foreach \k in {-1}
\foreach \l in {-1,0,1}
{\begin{scope}[shift={(\k*\ux+\l*\vx,\k*\uy+\l*\vy)}]
\draw (1.7,3.6) node {{\tiny(\l,1)}};
\end{scope}
}
%traj second part
\draw [thick,shift={(\ux+\vx,\uy+\vy)}] (0.4,2.1) --++ (0,-1.3)  -- (3+\cx-\ux,0.7+\cy-\uy);
\end{scope}

\draw (25,-5) node {{\tiny copy $1\in\ZZ/2\ZZ$}};
\end{tikzpicture}
             \caption{A lift of $\gamma$ to $\tilde{S}$}\label{fig:gamma_on_S_tilde}
        \end{subfigure}
        \caption{The bijection between the translated fundamental domains of $\tilde S$ and $\ZZ^2\times\ZZ/2\ZZ$}\label{fig:bij_fd_Z2}
    \end{figure}

    Let $\tilde\gamma$ be a lift of $\gamma$ starting in the fundamental domain $(0,0,0)$. It is a union of vertical segments, with a break each time that $\tilde\gamma$ intersect a slit, see \Cref{fig:gamma_on_S_tilde}. When $\tilde\gamma$ intersects the part of the slit illustrated in orange in the figures, it stays in the same copy of $\RR^2\setminus\mathcal{P}$. When it intersects a (deep or light) blue part, it changes from a copy to the other one.
    Note that the length of $\tilde\gamma$ does not change if we translate some parts of $\tilde\gamma$. 
    We translate if necessary the segments of $\tilde \gamma$ to ensure that whenever $\tilde \gamma$ comes back to the previous copy, it comes back in the previous fundamental domain. See \Cref{fig:modified_lift}. We translate also the segments of $\tilde\gamma$ in the negative copy of $\RR^2 \setminus \mathcal{P}$ so that the first crossed fundamental domain of this copy is $(0,0,1)$. The projection to $S$ is not a path anymore, but a union of paths, with in total the same vertical length. 
     We denote by $(p,q,0)\in \ZZ^2\times \ZZ/2\ZZ$ the last fundamental domain crossed by $\tilde\gamma$ in the positive copy of $\RR^\setminus \mathcal{P}$, and $(p',q',1)$ the one in the negative copy. Note that the coordinates $(n_1,n_2)$ of $\gamma$ in the basis $(\zeta_1,\zeta_2)$ are such that $n_1=p+p'$ and $n_2 = q+q'$. See \Cref{fig:modified_lift}.

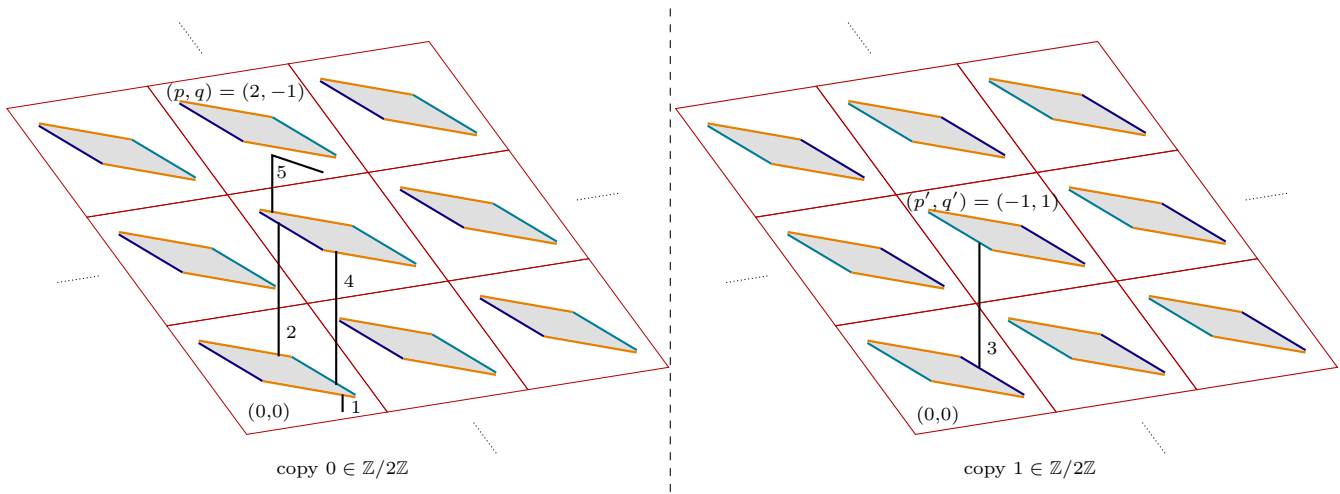
\begin{figure}[h]
    \centering
    \definecolor{colangle}{rgb}{0.7,0,0}
\def\eps{0.03}
% corners of the slit
\def\xa{3.9}
\def\ya{1.3}
\def\xb{-1}
\def\yb{3}
\def\xp{-0.72}
\def\xpp{3.62}
%coordinate of the left corner of the fd
\def\cx{0.5}
\def\cy{0.1}
% basis of Lamda
\def\ux{4.4} % x coord of vector u in \Lambda
\def\uy{0.7} % y coord of vector u in \Lambda
\def\vx{-2.5} % x coord of vector v in \Lambda
\def\vy{3.4} % x coord of vector v in \Lambda
\def\lx{0.6}
\def\ly{0.4}
%vectors defining the slit
\def\slopeTopx{2.9}
\def\slopeTopy{-0.50612}
\def\slopeLeftx{-2}
\def\slopeLefty{1.19388}
\begin{tikzpicture}[>=stealth,x=.425cm,y=.425cm]

\foreach \k in {0,1,2}
\foreach \l in {0,1,2}
{\begin{scope}[shift={(\k*\ux+\l*\vx,\k*\uy+\l*\vy)}]
\draw [colangle,very thin] (\cx,\cy) --++ (\ux,\uy) --++ (\vx,\vy) --++ (-\ux,-\uy) -- cycle;
% slit
\draw [color=colleft,shift={(0,-\eps)}, line width= 1pt] (-1,3) -- (1,1.80612);
\draw [color=colright,shift={(0,+\eps)}, line width= 1pt] (1.9,2.49388) -- (3.9,1.3);
\draw [color=colmid,shift={(0,+\eps)}, line width= 1pt] (-1,3) -- (1.9,2.49388);
\draw [color=colmid,shift={(0,-\eps)}, line width= 1pt] (1,1.80612) -- (3.9,1.3);
\fill [gray!25] (\xa,\ya) -- (1,1.80612) -- (\xb,\yb) -- (1.9,2.49388) -- cycle;
\end{scope}
}
\draw (1.2,0.8) node {{\tiny(0,0)}};
\draw (0.75+\ux+2*\vx,3.3+\uy+2*\vy) node {{\tiny$(p,q)=(2,-1)$}};

\draw [densely dotted,shift={(1.5*\vx,1.5*\vy)}] (0.5,0.1) ++ (-0.2*\ux,-0.2*\uy) --++ (-0.3*\ux,-0.3*\uy); %dots on the left
\draw [densely dotted,shift={(3*\ux+1.5*\vx,3*\uy+1.5*\vy)}] (0.5,0.1) ++ (0.2*\ux,0.2*\uy) --++ (0.3*\ux,0.3*\uy); %dots on the right
\draw [densely dotted,shift={(1.5*\ux,+1.5*\uy)}] (0.5,0.1) ++ (-0.2*\vx,-0.2*\vy) --++ (-0.3*\vx,-0.3*\vy); %dots below
\draw [densely dotted,shift={(3*\vx+1.5*\ux,3*\vy+1.5*\uy)}] (0.5,0.1) ++ (0.2*\vx,0.2*\vy) --++ (0.3*\vx,0.3*\vy); %dots above

\draw (3.5,-1) node {{\tiny copy $0\in\ZZ/2\ZZ$}};

%traj 1st part
\draw [thick] (\cx,\cy) ++ (3,0.7) --++ (0,0.55) ++ (\slopeLeftx,\slopeLefty) --++ (0,1.5) --++ (0,1.4) --++ (0,1.3) ;
%traj 3rd part
\draw [thick,shift={(\slopeTopx,\slopeTopy)}] (0.4,2.15) --++ (0,4.2) ++ (\slopeLeftx,\slopeLefty) --++ (0,1.8) -- (3+\cx+\ux+2*\vx-\slopeTopx,0.7+\cy+\uy+2*\vy-\slopeTopy);
\begin{tiny}
\draw[thick,right] (\cx,\cy) ++ (3,0.85) node {$1$};
\draw[thick] (1.9,3.3) node {$2$};
\draw[thick] (-0.7+\ux,4.2+\uy) node {$4$};
\draw[thick] (-0.3+\ux+\vx,4.2+\uy+\vy) node {$5$};
\end{tiny}

\draw[dashed] (3.125*\ux,-0.5*\vy) --++ (0,4.5*\vy);

% copy for the translation cover
\begin{scope}[rotate around={180:(\cx+0.5*\ux+0.5*\vx,\cy+1.*\uy+1.*\vy))},shift={(\cx-5.3*\ux,0)}]
\foreach \k in {-1,0,1}
\foreach \l in {-1,0,1}
{\begin{scope}[shift={(\k*\ux+\l*\vx,\k*\uy+\l*\vy)}]
\draw [colangle,thin] (\cx,\cy) --++ (\ux,\uy) --++ (\vx,\vy) --++ (-\ux,-\uy) -- cycle;
% slit
\draw [color=colleft,shift={(0,-\eps)}, line width= 1pt] (-1,3) -- (1,1.80612);
\draw [color=colright,shift={(0,+\eps)}, line width= 1pt] (1.9,2.49388) -- (3.9,1.3);
\draw [color=colmid,shift={(0,+\eps)}, line width= 1pt] (-1,3) -- (1.9,2.49388);
\draw [color=colmid,shift={(0,-\eps)}, line width= 1pt] (1,1.80612) -- (3.9,1.3);
\fill [gray!25] (\xa,\ya) -- (1,1.80612) -- (\xb,\yb) -- (1.9,2.49388) -- cycle;
\end{scope}
}

\draw [densely dotted,shift={(0.5*\vx-\ux,+0.5*\vy-\uy)}] (0.5,0.1) ++ (-0.2*\ux,-0.2*\uy) --++ (-0.3*\ux,-0.3*\uy); %dots on the right
\draw [densely dotted,shift={(2*\ux+0.5*\vx,2*\uy+0.5*\vy)}] (0.5,0.1) ++ (0.2*\ux,0.2*\uy) --++ (0.3*\ux,0.3*\uy); %dots on the lett
\draw [densely dotted,shift={(0.5*\ux-\vx,+0.5*\uy-\vy)}] (0.5,0.1) ++ (-0.2*\vx,-0.2*\vy) --++ (-0.3*\vx,-0.3*\vy); %dots above
\draw [densely dotted,shift={(2*\vx+0.5*\ux,2*\vy+0.5*\uy)}] (0.5,0.1) ++ (0.2*\vx,0.2*\vy) --++ (0.3*\vx,0.3*\vy); %dots below

\draw [shift={(\ux+\vx,\vy+\uy)}] (1.7,3.6) node {{\tiny(0,0)}};
\draw (2.2,1) node {{\tiny$(p',q')=(-1,1)$}};

%traj second part
\draw [thick,shift={(\ux+\vx,\vy+\uy)}] (0.4,2.1) --++ (0,-3.9);
\draw[thick,right] (0.5+\ux+\vx,1.5+\uy+\vy) node {{\tiny$3$}};
%traj 4th part
%\draw [thick,shift={(0,0)}] (0.4,-1.8) ++ (0,3.9) --++ (0,-3);
\end{scope}

\draw (25,-1) node {{\tiny copy $1\in\ZZ/2\ZZ$}};
\end{tikzpicture}
    \caption{Translating each part of the lift to estimate the displacement of the tiling billiards trajectory in $\RR^2$}
    \label{fig:modified_lift}
    {\small The numbers aside the black lines denote the order of the translated part of the lifted loop.}
\end{figure}
    
    In the plane $\RR^2$ tiled by parallelograms whose sides are given by $u_1$ and $u_2$, the vertical length of a path going to the parallelogram $(0,0)$ to the parallelogram $(p,q)$ is at least $|p| v_1 + |q| v_2$.
    
    In our case, we consider $\RR^2\setminus\mathcal{P}$, tiled by the parallelogram shaped fundamental domain whose sides are given by $u_1$ and $u_2$. We know that $\tilde\gamma$ can go through a slit in each fundamental (and at most once per fundamental domain).
    Going through a slit can produce a vertical jump of hight $v_4$ (going through the translation one, in blue on the figures) or at most $v_3$ 
    (when changing from a copy to the other, through the red and orange ones on the figures). We set $\delta=\max(v_3,v_4)$. Hence, the vertical length of $\tilde\gamma$ is at least $|p| (v_1 - \delta) +|q| (v_2-\delta)$.

    Since $\mathcal{O}_\varepsilon$ is bounded, there exist $v>0$ such that for every surface in $\mathcal{O}_\varepsilon$, $\max(v_1,v_2)<v$. Furthermore, $\min(v_1,v_2) > \varepsilon+\max(v_3,v_4)>\varepsilon$. If we take $c$ such that $cv<\varepsilon$, then all surfaces in $\mathcal{O}_\varepsilon$ satisfy that 
    \begin{align*}
       & cl_{\textrm{v}}(\zeta_i)=cv_1 \leqslant cv<\varepsilon\leqslant v_1-\delta \\
        & cl_{\textrm{v}}(\zeta_2)=cv_2\leqslant cv<\varepsilon\leqslant v_2-\delta.\
    \end{align*}
     We conclude that the vertical length of $\tilde\gamma$ on the positive copy is at least
     $|p| (v_1 - \delta) + |q| (v_2-\delta) \geqslant c \big(|p| l_{\textrm{v}}(\zeta_1)+|q|l_{\textrm{v}}(\zeta_2)\big) . $
The same reasoning on the negative copy gives that the vertical length of $\tilde\gamma$ on the negative copy is at least $ |p'| (v_1 - \delta) + |q'| (v_2-\delta) \geqslant c \big(|p'| l_{\textrm{v}}(\zeta_1)+|q'|l_{\textrm{v}}(\zeta_2)\big) . $
In total, we get
     $$  l_{\textrm{v}}(\gamma)  = l_{\textrm{v}}(\tilde\gamma) \geqslant (|p|+|p'|) (v_1 - \delta) + (|q|+|q'|) (v_2-\delta) \geqslant c\sum_{i=1}^2 |n_i| l_{\textrm{v}}(\zeta_i).$$
\end{proof}

The following Lemma is an adaptation to the case of half-translation surfaces ad of the homology surface (with its singularities) of Lemma 9.4 in \cite{Forni} for translation surfaces, rephrased in Lemma 9 of \cite{DHL}. It comes from the so called \emph{suffix-prefix decomposition}.

\begin{lem}\label{lem:decomp_traj}
There exists $T_0=T_0(X)$ such that for every $ T>T_0$, there exist an integer $n$, and a decomposition
$$
\forall p\in X,
\gamma_T(p) = \sum_{k=0}^{n}\sum_{j=1}^{2} m_j^{(k)}(p)\zeta_j^{(k)} \qquad \text{in } H_1(X,\RR)
$$
which satisfies:
\begin{itemize}
\item
for every $k\in\{0,\dots,n\}$, and $j\in\{1,2\}$, the $m_j^{(k)}(p)$ are (possibly negative) integers,
\item
the sum $|m_1^{(n)}(p)|+|m_2^{(n)}(p)|$ is not zero,
\item
for every $k\in\{0,\dots,n\}$, $\sum_{j=1}^{2} | m_j^{(k)}(p)| \leqslant 2 \frac{K^2}{c} e^{t_{k}-t_{k+1}}$,
\end{itemize}
where $K$ and $c$ are constants satisfying \Cref{lem:cte_K} and \Cref{lem:lower_bound_vert_length} respectively. 
\end{lem}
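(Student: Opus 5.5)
The plan is to follow the prefix–suffix scheme of \cite[Lemma 9.4]{Forni} and \cite[Lemma 9]{DHL}. We cut the vertical segment $\varphi_0^T(p)$ at successive returns to transversals, one for each renormalization scale. We close every piece into a loop, and then count the homology coefficients of each loop at its own scale using \Cref{lem:lower_bound_vert_length}. The target is exactly the stated bound $\sum_j|m_j^{(k)}(p)|\leqslant 2\frac{K^2}{c}e^{t_k-t_{k+1}}$. Since $t_{k+1}>t_k$, this bound is small whenever the gap $t_{k+1}-t_k$ is large. The proof therefore has to produce, at each level $k$, a loop whose renormalized vertical length is bounded by a factor decaying like $e^{t_k-t_{k+1}}$, and not merely a loop of bounded length.

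\textbf{Step 1 (transversals at every scale).} For each $k$, the surface $g_{t_k}X$ lies in $\mathcal{U}$. \Cref{lem:cte_K} then gives a horizontal segment $I(g_{t_k}X)$, which we pull back to a segment $I_k\subset X$. Under $g_{t_k}$ vertical lengths are multiplied by $e^{-t_k}$. So the two properties of \Cref{lem:cte_K} read on $X$ as follows. Every vertical path of length at least $Ke^{t_k}$ meets $I_k$. Two consecutive hits of $I_k$ are at least $K^{-1}e^{t_k}$ apart. Moreover, since $I_k$ is obtained by contracting horizontally by $e^{-t_k}$, the segments are nested, $I_{k+1}\subset I_k$, and all start at the same singularity.

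\textbf{Step 2 (decomposition).} Let $n$ be the largest index with $Ke^{t_n}\leqslant T$; this is where $T_0$ enters, guaranteeing $n\geqslant0$ and that $\varphi_0^T(p)$ meets $I_n$. Let $a_n<b_n$ be the first and last hitting times of $I_n$. Then, inductively for $k<n$, let $a_k$ be the first hitting time of $I_k$ and $b_k$ the last one before $a_{k+1}$, and symmetrically on the suffix side. The segment $\varphi_0^T(p)$ is thereby split into a central piece at level $n$ and prefix/suffix pieces at each level $k$. Each piece is closed by a horizontal subsegment of $I_k$. Because $I$ was chosen so that connecting paths can avoid $\xi_1,\xi_2$ and $I$, the sum of the closed pieces is homologous to $\gamma_T(p)$. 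We define $\gamma^{(k)}$ to be the sum of the two level-$k$ loops.

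\textbf{Step 3 (bounds).} By maximality of the hitting times, each level-$k$ piece contains no hit of $I_{k+1}$ in its interior. By the first property of Step 1 at scale $k+1$, its vertical length is therefore below $Ke^{t_{k+1}}$. The closing subsegment of $I_k$ is horizontal, so the whole loop has vertical length below $2Ke^{t_{k+1}}$. We now write $\gamma^{(k)}$ in the basis $\zeta_j^{(k)}$. Applying \Cref{lem:lower_bound_vert_length} on $g_{t_k}X\in\mathcal{U}$, where $c$ is uniform, and using \Cref{lem:len_zeta}, gives
\[
c\sum_j |m_j^{(k)}|\,K^{-1}e^{t_k}\;\leqslant\; l_{\textrm{v}}(\gamma^{(k)}).
\]
The remaining factor needed to reach $e^{t_k-t_{k+1}}$ comes from the second property of Step 1 at scale $k+1$: distinct hits of $I_{k+1}$ are separated by at least $K^{-1}e^{t_{k+1}}$. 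Normalizing the length of the level-$k$ pieces against this separation rather than against $e^{t_k}$ should produce the second factor $K$ together with the $e^{-t_{k+1}}$. Finally, $|m_1^{(n)}|+|m_2^{(n)}|\neq0$ holds because the central piece, being longer than the separation at scale $n$, is a nontrivial loop crossing $I_n$ twice, hence not null-homologous.

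\textbf{Main obstacle.} The delicate point is precisely this normalization in Step 3. One has to check carefully which scale measures the length of each piece and which scale measures the basis, so that the two exponentials combine to $e^{t_k-t_{k+1}}$ with constant $2K^2/c$. I would first redo the computation entirely on the renormalized surface $g_{t_{k+1}}X$, where $\zeta_j^{(k)}$ has vertical length comparable to $e^{t_k-t_{k+1}}$ up to $K$, and see whether the inequality closes there.
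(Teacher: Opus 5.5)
Your decomposition is the paper's own: first and last hits of the nested segments $I^{(k)}$, pieces closed along $I^{(k)}$, Lemma~\ref{lem:cte_K} at scale $t_{k+1}$ for the upper bound, and Lemma~\ref{lem:lower_bound_vert_length} on $g_{t_k}X$ with Lemma~\ref{lem:len_zeta} for the lower bound. But the last paragraph of your Step~3 is a genuine gap, and it cannot be filled. The two inequalities you correctly derive are $l_{\textrm{v}}(\gamma^{(k)})<2Ke^{t_{k+1}}$ and $l_{\textrm{v}}(\gamma^{(k)})\geqslant cK^{-1}e^{t_k}\sum_j|m_j^{(k)}|$. They combine to
\[
\sum_{j=1}^{2}|m_j^{(k)}|\leqslant \frac{2K^2}{c}\,e^{t_{k+1}-t_k},
\]
a bound that \emph{grows} with the gap $t_{k+1}-t_k$. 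The separation property in Lemma~\ref{lem:cte_K} is a lower bound on vertical length, so it can only bound coefficients from below; it can never improve an upper bound. In fact the stated bound at $k=n$ is incompatible with the second bullet. It would give $1\leqslant|m_1^{(n)}|+|m_2^{(n)}|\leqslant\frac{2K^2}{c}e^{t_n-t_{n+1}}<1$ as soon as $t_{n+1}-t_n>\log(2K^2/c)$, and nothing in the setting bounds the gaps between returns to $\mathcal{U}$.

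The paper's proof reaches $e^{t_k-t_{k+1}}$ only through a sign slip. It writes $l_{\textrm{v}}(\gamma_i^{(k)})=e^{-t_{k+1}}l_{\textrm{v}}(g_{t_{k+1}}\gamma_i^{(k)})\leqslant e^{-t_{k+1}}K$ and uses $l_{\textrm{v}}(\zeta_j^{(k)})\geqslant K^{-1}e^{-t_k}$. However, $g_t=\mathrm{diag}(e^t,e^{-t})$ contracts vertical lengths by $e^{-t}$; this is the same convention under which $I^{(k)}$, of length $e^{-t_k}l(I)$, is stretched onto $I(g_{t_k}X)$. And Lemma~\ref{lem:len_zeta} gives $l_{\textrm{v}}(\zeta_j^{(k)})\geqslant K^{-1}e^{t_k}$. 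So your ``main obstacle'' is an error in the statement, not a missing idea. What you should prove is the bound $\frac{2K^2}{c}e^{t_{k+1}-t_k}$, which your Step~3 already yields. This corrected bound still suffices for Theorem~\ref{thm:bounded_intersection}: for $k\geqslant k_0$ one has $t_{k+1}-t_k\leqslant M+\delta(2k+1)$, so $\sum_k\lambda^k e^{t_{k+1}-t_k}$ converges once $\lambda e^{2\delta}<1$.

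Two smaller points.
\begin{enumerate}
\item Your choice of $n$ (largest with $Ke^{t_n}\leqslant T$) only guarantees one hit of $I_n$, so $a_n<b_n$ may fail. Take $n$ maximal with two hits, as the paper does.
\item ``Crosses $I_n$ twice, hence not null-homologous'' is not valid on a half-translation surface. A leaf passing close to a simple pole can turn back and return to $I_n$ with reversed direction, and closing it along $I_n$ gives a loop bounding a disc around the pole. The paper's one-line justification of the second bullet has the same defect, but that bullet is never used afterwards.
\end{enumerate}
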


\begin{proof}
For each $k\in\NN$, denote by $I^{(k)}$ the subinterval of $I$ starting at $s$ and of length $e^{-t_k}l(I)$.
With this notation, the segment $I^{(k)}$ corresponds to the segment $I\left(g_{t_{k}}.X\right)$ given by \Cref{lem:cte_K}:
$$g_{t_{k}}(I^{(k)}) = I\left(g_{t_{k}}.X\right)\subset g_{t_k}.X.$$

\begin{figure}[h!]
\centering
\def\colphi{blue}
\def\colgamma{violet}
\def\colgamN{green!40!black}
\def\colp{red}
\def\colsa{orange}
\def\colsb{magenta}
\def\colsc{gray}
\def\colsd{brown}
\definecolor{colhom}{rgb}{0.7,0,0.1}

\definecolor{coli}{rgb}{1,0.3,0} % I(1)
\definecolor{colii}{rgb}{0.6,0.3,0}%{0.2,0.7,0.4} % I(2)

\begin{tikzpicture}[line cap=round,line join=round,>=stealth,x=0.65cm,y=0.65cm]
\draw (0,0) -- (3,1) -- (6.25,4) -- (7.25,4) -- (8.25,4) -- (10,-1) --  (6.75,-4) -- (5.75,-4) -- (4.75,-4) -- (1.75,-5) -- cycle;

%\foreach \x{}
\draw [line width=1pt] (0,0) -- (8,0);
%\draw [red,shift={(0,-0.02)}] (0,0) -- (2,0);
%\draw [\colphi] (5.5,-0.5) -- (5.5,3.5);
%\draw [\colphi] (9.5,-1.5) -- (9.5,0);
%\draw [\colphi] (1.5,-4) -- (1.5,0.5);
%\draw [\colphi] (2.5,-5+1/6) -- (2.5,-1.5);

\draw [colhom, ->] (1.75,-5) -- (0,0);
\draw [colhom, ->] (1.75,-5) -- (10,-1);

\draw [color=\colphi,->] (5.35,-0.83)-- (5.35,0.8);
\draw [color=\colphi] (5.35,0.7)-- (5.35,3.17);
\draw [color=\colphi] (9.1,-1.83)-- (9.1,1.57);
\draw [color=\colphi] (8.1,-2.75)-- (8.1,4);
\draw [color=\colphi,->] (6.4,4)-- (6.4,1.5);
\draw [color=\colphi] (6.4,1.6)-- (6.4,1.02);
\draw [color=\colphi] (4.35,2.25)-- (4.35,-4.13);
\draw [color=\colphi] (2.6,0.87)-- (2.6,-4.72);
\draw [color=\colphi] (0.85,0.28)-- (0.85,-2.43);
\draw [coli,shift={(0,-0.03)},line width=1pt] (0,0)-- (3.1,0);
\draw [colii,,shift={(0,0.03)},line width=1pt] (0,0)-- (0.63,0);

\begin{footnotesize}
\draw [below,line width=1pt] (6.5,0) node {$I$};
\draw [below, coli] (1.6,0) node {$I^{(1)}$};
\draw [above, colii] (0.3,0) node {$I^{(2)}$};
\draw [below left, colhom] (0.25,-0.5) node {$\zeta_2^{(0)}$};
\draw [below, colhom] (7,-2.5) node {$\zeta_1^{(0)}$};

\draw [left, \colphi] (8.1,-1.3) node {$\varphi_0^T(p)$};
\fill [\colp] (6.4,1.02) circle (1pt);
\draw [right] (6.4,1.02) node {$\Phi_T(p)$};
%\fill [\colp] (2.5,-0.5) circle (1pt);
%\draw [above] (2.5,-0.5) node {$\Phi_T(p)^+$};
%\fill [\colp] (1.5,-0.5) circle (1pt);
%\draw [above left] (1.5,-0.5) node {$\Phi_T(p)^-$};
%\draw [\colgamma] (4.5,-3-1/6) -- (4.5,2.5);
%\draw [\colgamma] (2.5,-4+1/6) -- (2.5,5/6);
\fill [\colp] (5.35,-0.83) circle (1pt); %(5.5,-0.5)
\draw [below] (5.35,-0.83) node {$p$};%(5.5,-0.5) node {$p$};

\draw [above] (1.5,0.5) node {$A$};
\draw [above left] (5,3) node {$B$};
\draw [above] (6.75,4) node {$C$};
\draw [above] (7.75,4) node {$C$};
\draw [above right] (9,2) node {$D$};

\draw [below left] (1,-3) node {$D$};
\draw [below] (3.5,-4.5) node {$A$};
\draw [below] (5.25,-4) node {$E$};
\draw [below] (6.25,-4) node {$E$};
\draw [below right] (8.5,-2.5) node {$B$};

\fill [\colsa] (0,0) circle (1pt);
\fill [\colsa] (1.75,-5) circle (1pt);
\fill [\colsa] (6.25,4) circle (1pt);
\fill [\colsa] (8.25,4) circle (1pt);
\fill [\colsa] (10,-1) circle (1pt);

\fill [\colsb] (3,1) circle (1pt);
\fill [\colsb] (4.75,-4) circle (1pt);
\fill [\colsb] (6.75,-4) circle (1pt);

\fill [\colsc] (7.25,4) circle (1pt);

\fill [\colsd] (5.75,-4) circle (1pt);
\end{footnotesize}
\end{tikzpicture}
\hfill
\begin{tikzpicture}[line cap=round,line join=round,>=stealth,x=0.65cm,y=0.65cm]
\draw (0,0) -- (3,1) -- (6.25,4) -- (7.25,4) -- (8.25,4) -- (10,-1) --  (6.75,-4) -- (5.75,-4) -- (4.75,-4) -- (1.75,-5) -- cycle;

%\foreach \x{}
\draw [line width=1pt] (0,0) -- (8,0);
%\draw [red,shift={(0,-0.02)}] (0,0) -- (2,0);
%\draw [\colphi] (5.5,-0.5) -- (5.5,3.5);
%\draw [\colphi] (9.5,-1.5) -- (9.5,0);
%\draw [\colphi] (1.5,-4) -- (1.5,0.5);
%\draw [\colphi] (2.5,-5+1/6) -- (2.5,-1.5);

\draw [colhom, ->] (1.75,-5) -- (0,0);
\draw [colhom, ->] (1.75,-5) -- (10,-1);

\draw [color=\colgamma,->] (5.35,-0.83)-- (5.35,2.8);
\draw [color=\colgamma] (5.35,2.7)-- (5.35,3.17);
\draw [color=\colgamma] (9.1,-1.83)-- (9.1,1.57);
\draw [color=\colgamma] (8.1,-2.75)-- (8.1,4);
\draw [color=\colgamma,->] (6.4,4)-- (6.4,1.5);
\draw [color=\colgamma] (6.4,1.6)-- (6.4,1.02);
\draw [color=\colgamma] (4.35,2.25)-- (4.35,-4.13);
\draw [color=\colgamN] (2.6,0)-- (2.6,-4.72);
\draw [color=\colgamma] (2.6,0.87)-- (2.6,0);
\draw [color=\colgamN] (0.85,0)-- (0.85,0.28);
\draw [color=\colgamma] (0.85,0)-- (0.85,-2.43);
\draw [coli,shift={(0,-0.03)},line width=1pt] (0,0)-- (3.1,0);
\draw [colii,shift={(0,0.03)},line width=1pt] (0,0)-- (0.63,0);
\draw [color=\colgamma] (5.35,-0.83)-- (6.4,1.02);

%closing \gammaN
\draw [color=\colgamN] (0.85,0.05)-- (2.6,0.05);
\draw [color=\colgamma] (0.85,-0.05)-- (2.6,-0.05);

\begin{scriptsize}%{footnotesize}
\draw [below] (6.5,0) node {$I$};
\draw [above, coli,line width=1pt] (1.75,-0.05) node {$I^{(1)}$};
\draw [above, colii,line width=1pt] (0.3,0) node {$I^{(2)}$};
\draw [below left, colhom] (0.25,-0.5) node {$\zeta_2^{(0)}$};
\draw [below, colhom] (7,-2.5) node {$\zeta_1^{(0)}$};

\draw [left, \colgamma] (8.1,-1.3) node {$\gamma_1^{(0)}+\gamma_2^{(0)}$};
\draw [left, \colgamN] (2.6,-2.3) node {$\gamma^{(1)}$};
\fill [\colp] (6.4,1.02) circle (1pt);
\draw [right] (6.4,1.02) node {$\Phi_T(p)$};
\fill [\colp] (5.35,-0.83) circle (1pt); %(5.5,-0.5)
\draw [below] (5.35,-0.83) node {$p$};%(5.5,-0.5) node {$p$};

%\fill [\colp] (6.4,0) circle (1pt);
%\draw [right] (6.4,0) node {$\Phi_T(p)$};
%\fill [\colp] (5.35,0) circle (1pt);

\fill  (0.85,0) circle (1pt); %[\colp]
\draw [below right] (0.85-0.1,0) node {$\Phi_{{T_1^{(1)}}}(p)$};

\fill (2.6,0) circle (1pt); %[\colp] 
\draw [above right] (2.6-0.1,-0.1) node {$\Phi_{{T_2^{(1)}}}(p)$};

\draw [above] (1.5,0.5) node {$A$};
\draw [above left] (5,3) node {$B$};
\draw [above] (6.75,4) node {$C$};
\draw [above] (7.75,4) node {$C$};
\draw [above right] (9,2) node {$D$};

\draw [below left] (1,-3) node {$D$};
\draw [below] (3.5,-4.5) node {$A$};
\draw [below] (5.25,-4) node {$E$};
\draw [below] (6.25,-4) node {$E$};
\draw [below right] (8.5,-2.5) node {$B$};

\fill [\colsa] (0,0) circle (1pt);
\fill [\colsa] (1.75,-5) circle (1pt);
\fill [\colsa] (6.25,4) circle (1pt);
\fill [\colsa] (8.25,4) circle (1pt);
\fill [\colsa] (10,-1) circle (1pt);

\fill [\colsb] (3,1) circle (1pt);
\fill [\colsb] (4.75,-4) circle (1pt);
\fill [\colsb] (6.75,-4) circle (1pt);

\fill [\colsc] (7.25,4) circle (1pt);

\fill [\colsd] (5.75,-4) circle (1pt);
\end{scriptsize}%\end{footnotesize}
\end{tikzpicture}
\caption{Illustration of the different curves} 
\label{fig:def_curves}
\end{figure}
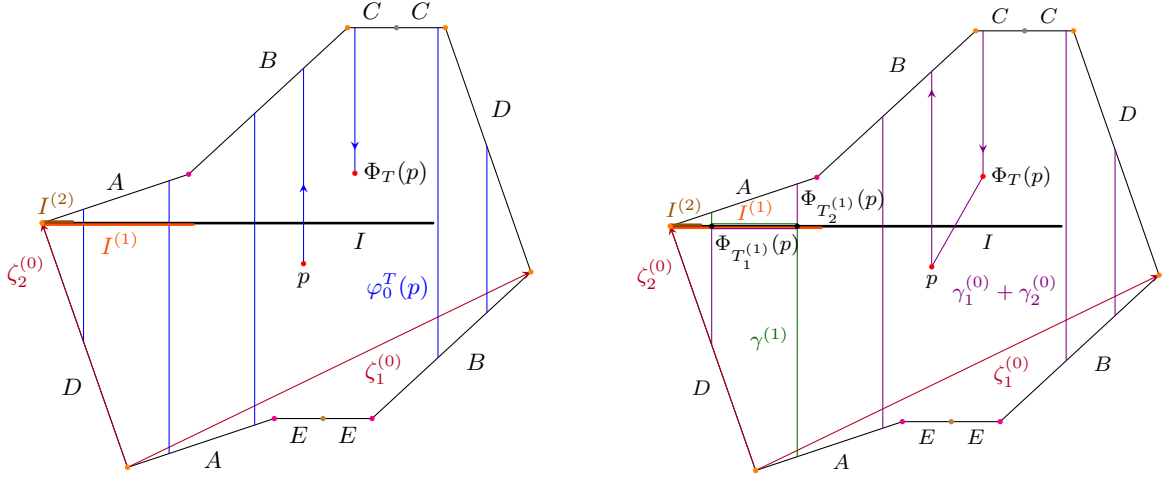

If $T$ is big enough, then the vertical path $\varphi_0^T(p)$ on $X$ intersects the segment $I$ several times by Lemma~\ref{lem:cte_K}. It will also intersect several times the segments $I^{(k)}$ for the first integers $k$. 
Let $n$ be the greatest integer such that $\gamma_T$ intersects at least twice the segment $I^{(n)}$, and let $T_1^{(n)}$ (resp. $T_2^{(n)}$) be the first (resp. last) intersection time between $\varphi_0^T(p)$ and $I^{(n)}$.

Set $\gamma^{(n)}$ to be the closed curve that is the union of $\varphi_{T_1^{(n)}}^{T_2^{(n)}}(p)$ with the horizontal path between $\Phi_{T_1^{(n)}}(p)$ and $\Phi_{T_2^{(n)}}(p)$ (which is a subinterval of $I^{(n)}$). In other words, the curve $\gamma^{(n)}$ is vertical from $\Phi_{T_1^{(n)}}(p)$ to $\Phi_{T_2^{(n)}}(p)$ and then horizontal (along $I^{(n)}$) from  $\Phi_{T_2^{(n)}}(p)$ to $\Phi_{T_1^{(n)}}(p)$.
Define similarly
$$\left\{
\begin{array}{rl}
\rho_1^{(n)}&=\gamma_{0,T_1^{(n)}}(p) \\
\rho_2^{(n)}&=\gamma_{T_2^{(n)},T}(p)
\end{array}
\right. .
$$
Seen as elements of $H_1(X,\RR)$, $\gamma_T$, $\gamma^{(n)}$, $\rho_1^{(n)}$ and $\rho_2^{(n)}$ satisfy: $\gamma_T = \rho_1^{(n)} + \gamma^{(n)} + \rho_2^{(n)}$. 

We now decompose $\rho_1^{(n)}$ and $\rho_2^{(n)}$ as we have decomposed $\gamma$. 
Let $T_1^{(n-1)}$ (resp. $T_2^{(n-1)}$) be the first (resp. last) intersection time between $\rho_1^{(n)}$ (resp. $\rho_2^{(n)}$) and $I^{(n-1)}$. Note that $\rho_1^{(n)}$ (resp. $\rho_2^{(n)}$) ends (resp. starts) in $I^{(n)}\subset I^{(n-1)}$ so $T_1^{(n-1)}$ (resp. $T_2^{(n-1)}$) is well-defined.
Set
$$\left\{
\begin{array}{rl}
\gamma_1^{(n-1)}&=\gamma_{T_1^{(n-1)},T_1^{(n)}}(p) \\
\gamma_2^{(n-1)}&=\gamma_{T_2^{(n)},T_2^{(n-1)}}(p) 
\end{array}
\right.
$$
and
$$\left\{
\begin{array}{rl}
\rho_1^{(n-1)}&=\gamma_{0,T_1^{(n-1)}}(p) \\
\rho_2^{(n-1)}&=\gamma_{T_2^{(n-1)},T}(p) 
\end{array}
\right. .
$$
We may have $T_1^{(n-1)}=T_1^{(n)}$ (resp. $T_2^{(n-1)}=T_2^{(n)}$) and then $\gamma_1^{(n-1)}$ (resp. $\gamma_2^{(n-1)}$) is empty.
By induction, we get an increasing sequence of times 
$$0\leqslant T_1^{(1)}\leqslant\dots\leqslant T_1^{(n-1)} \leqslant T_1^{(n)}\leqslant T_2^{(n-1)}\dots\leqslant T_2^{(1)}\leqslant T$$ 
such that $T_1^{(k)}$ (resp. $T_2^{(k)}$) is the first (resp. last) time in which $\varphi_0^T(p)$ intersects $I^{(k)}$. We close each vertical path $\varphi_{T_1^{k}}^{T_1^{k+1}}$ with a horizontal path along $I^{(k)}$ and call it $\gamma_1^{(k)}$. We define $\gamma_2^{(k)}$ in the same way: we close each vertical path $\varphi_{T_2^{k+1}}^{T_2^{k}}$ by a horizontal path along $I^{(k)}$.
It remains $\rho_1^{(0)}$ and $\rho_2^{(0)}$. We have
$$\gamma = \rho_1^{(0)} + \sum_{k=0}^{n-1} \gamma_1^{(k)}(p)+\gamma^{(n)} +\sum_{k=0}^{n-1} \gamma_2^{(k)} + \rho_2^{(0)},$$ 
where
for any $k\in\{0,\dots,n-1\}$:

\begin{itemize}
\item
$\rho_1^{(0)}=\gamma_{0,T_1^{(0)}}(p)$ starts at $p$, is vertical until $\Phi_{T_1^{(0)}}(p) \in I^{(0)}=I$ and is closed with a path that does not intersect $\xi_1$, $\xi_2$.
\item
$\gamma_1^{(k)}$ is a non empty loop with a vertical part starting in $I^{(k)}$ , ending in $I^{(k+1)}$, that intersects at most once $I^{(k+1)}$, and closed with a horizontal part included in $I^{(k)}$,
\item
$\gamma^{(n)}$ is a loop with a vertical part starting and ending in $I^{(n)}$ that intersects at most once $I^{(n+1)}$, and closed with a horizontal part included in $I^{(n)}$,
\item
$\gamma_2^{(k)}$ is a loop with a vertical part starting in $I^{(k+1)}$, ending in $I^{(k)}$, that intersects at most once $I^{(k+1)}$, and closed with a horizontal part included in $I^{(k)}$,
\item
$\rho_2^{(0)}=\gamma_{T_2^{(0)},T}(p)$ starts at $\Phi_{T_2^{(0)}}(p) \in I^{(0)}=I$, is vertical until $\Phi_T(p) $ and is closed with a path that does not intersect $\xi_1$ nor $\xi_2$.
\end{itemize}
From now on, we replace $\gamma_j^{(0)}$ by $\gamma_j^{(0)}+\rho_j^{(0)}$. This loop still intersects at most once $I^{(1)}$, which is the only property we use about $\gamma_i^{(0)}$.
For $j\in\{1,2\}$, we write the loop $\gamma_j^{(k)}(p)$, seen as an element of $H_1(X,\ZZ)$, in the basis $\left(\zeta_1^{(k)},\zeta_2^{(k)}\right)$ (see Definition~\ref{def:zetak}): 
$$\gamma_j^{(k)} = n_{j,1}^{(k)} \zeta_1^{(k)} + n_{j,2}^{(k)} \zeta_2^{(k)}$$
and
$$\gamma^{(n)} = m_1^{(n)} \zeta_1^{(n)} + m_2^{(n)} \zeta_2^{(n)}.$$
Note that $\gamma^{(n)}$ is not empty so $|m_1^{(n)}|+|m_2^{(n)}|$ does not vanish.
Of course, the curves depends on $p$, hence the coefficients $n_{j,i}^{(k)}$ and $m_j^{(k)}$ depend also on $p$. However, we will bound the sum of their absolute value from above by a constant that does not depend on $p$.
The curve $\gamma_i^{(k)}$ can not be too long because it intersects at most once the segment $I^{(k+1)}$, which is such that $g_{t_{k+1}}.I^{(k+1)} = I\left(g_{t_{k+1}}.X\right)$ (this is why we defined $I^{(k)}$ this way) and Lemma~\ref{lem:cte_K} applied to $g_{t_{k+1}}.\gamma_i^{(k)}\subset g_{t_{k+1}}.X$ gives:
$$
l_{\text{v}}\left(\gamma_i^{(k)}\right) =  e^{-t_{k+1}}  l_{\text{v}}\left(g_{t_{k+1}}.\gamma_i^{(k)}\right) \, \leqslant \,  e^{-t_{k+1}} K.
$$
Moreover, $l_{\text{v}}\left(\gamma_i^{(k)}\right) \geqslant c\sum_{j=1}^{2} \left| n_{i,j}^{(k)} \right| l_{\text{v}}\left(\zeta_j^{(n)}\right)$ by \Cref{lem:lower_bound_vert_length}.
Together with Lemma~\ref{lem:len_zeta}, we get:
$$
l_{\text{v}}(\gamma_i^{(k)}) \geqslant c \sum_{j=1}^{2} \left|n_{i,j}^{(k)}\right| K^{-1} e^{-t_k}.
$$
From these two bounds, we get, for $i=1$ and $i=2$:
$$
\sum_{j=1}^{2} \left|n_{i,j}^{(k)}\right| \leqslant \frac{K^2}{c} e^{t_k-t_{k+1}}.
$$
We define $m_1^{(k)}=n_{1,1}^{(k)}+n_{2,1}^{(k)}$ and $m_2^{(k)}=n_{1,2}^{(k)}+n_{2,2}^{(k)}$ so that:
$$\gamma_1^{(k)}+\gamma_2^{(k)} = m_1^{(k)} \zeta_1^{(k)} + m_2^{(k)} \zeta_2^{(k)}.$$
Then we get the bound announced:
$$
\sum_{j=1}^{2} \left|m_{j}^{(k)}\right| \leqslant \sum_{i,j=1}^{2} \left|n_{i,j}^{(k)}\right| \leqslant \frac{2K^2}{c} e^{t_k-t_{k+1}}.
$$

\vspace{-\belowdisplayskip}\[\]
\end{proof}

\section{Conclusion}\label{sec:ccl}

\subsection{Computation of the algebraic intersection number}

Take $w\in H_1(X,\RR)$ and write $(w_1^{(k)},w_2^{(k)})$ the coordinates of $w$ in the basis $\left(\zeta_1^{(k)},\zeta_2^{(k)}\right)$, then we have, using the decomposition of \Cref{lem:decomp_traj}:

\begin{align}
\iota(\gamma_{T}(p),w) &= \iota\left( \sum_{k=0}^{n} \sum_{i=1}^{2} m_i^{(k)} \zeta_i^{(k)}, w\right)= \sum_{k=0}^{n} \sum_{i,j=1}^{2} m_i^{(k)} \iota\left(\zeta_i^{(k)},w\right)   \notag \\
&= \sum_{k=0}^{n} \sum_{i,j=1}^{2} m_i^{(k)} w_{j}^{(k)} \iota\left(\zeta_i^{(k)},\zeta_{j}^{(k)}\right) \notag  \\
				&= \sum_{k=0}^{n} \left( m_1^{(k)} w_{2}^{(k)} + m_2^{(k)} w_{1}^{(k)} \right).\label{eqn:comput_int_nb}
\end{align}

We will control the $m_j$'s with Lemma~\ref{lem:decomp_traj} and the $w_j^{(k)}$'s by choosing a good $w$ with the Oseledets theorem applied to (a discrete version of) the Kontsevich--Zorich cocycle. 
 
\subsection{Conclusion using the Oseledets theorem}\label{subsec:ccl_Oseledets}

We have everything to prove Theorem~\ref{thm:bounded_intersection}.

\begin{proof}[Proof of \Cref{thm:bounded_intersection}]
Let us first prove the statement for $X\in\mathcal{O}_\varepsilon$. We consider $\mathcal{U}$ a neighborhood of $X$ in $\mathcal{O}_\varepsilon$ where Lemmas~\ref{lem:cte_K}, \ref{lem:lower_bound_vert_length} and~\ref{lem:decomp_traj} hold. We consider the discrete version of the Kontsevich--Zorich cocycle given by the first returns to $\mathcal{U}$.
The surface $X$ is Oseledets generic, so the Oseledets theorem (\Cref{thm:Oseledets}) applied to this discrete version of the Kontsevich--Zorich cocycle gives two Lyapunov exponents $ \theta_1\leqslant\theta_2$ and a flag decomposition $\{0\}=E_0\subset E_1 \subset E_2 = H_1(X,\RR)$ such that
$$
\forall v\in E_i\setminus E_{i-1}, \, \frac{\log\left\|A_{KZ}^{(n)}v\right\|}{n}\underset{n\rightarrow\infty}{\longrightarrow} \theta_i.
$$
Moreover, Bell, Delecroix, Gadre, Gutierez-Romo and Schleimer \cite{bell2021flow} showed that the plus and minus Lyapunov spectra of any component of any stratum of half-translation surfaces are simple, in particular here $\theta_1<\theta_2$. % in the stratum $\mathcal{Q}(1^2,-1^2)$.
Furthermore, the Kontsevich--Zorich cocycle is symplectic, which implies that $\theta_1=-\theta_2$. We conclude that $\theta_1<0$, namely $\lambda := e^{\frac{\theta_1}{2}} < 1$.

Let $w\in H_1(X,\RR)$ be a vector in the contracted direction $E_1$, and let $\delta\in (0,\frac{\theta_2}{2})$. Then there exists $k_0$ such that:
$$ \forall k\geqslant k_0, \, \log\left\|w^{(k)}\right\|= \log\left\|A_{KZ}^{(k)}w\right\|  \leqslant k(\theta_1+\delta) < k\frac{\theta_1}{2}. $$

Then $\left\|w_j^{(k)}\right\| \leqslant \lambda^k \|w\|$ and using \Cref{eqn:comput_int_nb}, we get
\begin{align*}
\left|\iota(\gamma_{0,T}(p),w)\right| &\leqslant \sum_{k=0}^{n} \left\|w^{(k)}\right\| \left(\left| m_1^{(k)}\right|  + \left| m_2^{(k)} \right|  \right) \\
		&\leqslant \sum_{k=0}^{n} \lambda^k \|w\| \left( \left| m_1^{(k)}\right|  + \left| m_2^{(k)} \right|  \right) \\	
		&< \|w\| \sum_{k=0}^{n} \lambda^k \frac{K^2}{c} e^{t_k-t_{k+1}}.
\end{align*}

We write $M=\frac{1}{\mu(\mathcal{U})}$ the inverse of the measure of $\mathcal{U}$ in $\mathcal{Q}(-1^2,1^2)$. By Kac's lemma $ \frac{t_k}{k} \underset{k\rightarrow\infty}{\longrightarrow} M $, so for any $\delta>0$, there exists $k_0\in\NN$ such that
$$
\forall k\geqslant k_0, \quad -M - \delta(2k +1) \leqslant t_k - t_{k+1} \leqslant -M +\delta(2k+1), 
$$
which leads to
$$
\left|\iota(\gamma_{0,T}(p),w)\right| < \|w\|_{\infty}  \left( \sum_{k=0}^{k_0-1} \lambda^k \frac{K^2}{c} e^{t_k-t_{k+1}} + e^{-M+\delta}\sum_{k=k_0}^{n} \lambda^k e^{2k\delta} \right).
$$
For $\delta$ small enough, this is a convergent geometric sum. 
This shows the lemma, for 
$$C= \|w\|_{\infty} \left( \frac{e^{-M+\delta}}{1-\lambda e^{2\delta}} +\sum_{k=0}^{k_0-1} \lambda^k \frac{K^2}{c} e^{t_k-t_{k+1}} \right).$$
Note that this value depends on the surface $X$ (because of the times $t_k$, for $k\in\{0,k_0-1\}$).
This proves the theorem for almost every surface in $\mathcal{O}_\varepsilon$.

Now, remark that if the surface $X\in\mathcal{Q}(-1^2,1^2)$ satisfies the theorem with a certain cycle $w\in H_1(X,\RR)$, then the cycle $g_t w$ (given by the image under $g_t$ of any representative of the homology class $w$) in the homology group of the surface $g_tX$ satisfies: for every $p\in g_t X$, $q = g_{-t}p\in X$ is such that 
$$|\iota(g_t w,\gamma_T(p))|= | \iota(w, \gamma_{e^T}(q)) |<C$$ 
for every $T>0$.
We deduce that the theorem holds for the image of the open set $\mathcal{O}_\varepsilon$ under $g_t$. Since $g_t$ is ergodic, the theorem holds for the entire stratum. 
\end{proof}

This proves on the same occasion Theorem~\ref{thm:tb_in_wt} as a corollary, as stated by Lemma~\ref{lem:transition} . 

Goujard \cite{Goujard} gives the exact value of volumes of strata of small genus. Together with the formula of Eskin, Kontsevich and Zorich \cite{EKZ} on the sum of Lyapunov exponents, we get that the exact values of the Lyapunov exponents for the Teichmüller flow in the stratum $\mathcal{Q}(-1^2,1^2)$ are $\theta_1=-\frac{2}{3}$ and $\theta_2=\frac{2}{3}$. 

\section{Concluding remarks and perspectives}
We used the existence of a contracted direction in the homology by the Kontsevich-Zorich cocycle to show the existence of a trapping strip. But there is also a dilated direction (with Lyapunov exponent $\theta = 2/3$ in our case). This means that a generic trajectory in a generic setting has a logarithmic growth rate of $2/3$:
$$
\underset{n\rightarrow\infty}{\limsup} \frac{\log |\Psi_t(p) - p|}{\log t} = \frac{2}{3}.
$$
Of course, this expansion has to be in the direction of the trapping strip.
This result can be proved using similar methods as in \cite{jay}.

We showed that the vertical trajectories are trapped in a strip for \emph{almost every} lattice $\Lambda$ according to which we place the obstacles. This result reminds the result for Eaton lenses \cite{FS}, which was generalized by Fr{\k{a}}czek , Shi, and Ulcigrai \cite{FSU}. They show that for \emph{every} lattice $\Lambda$, the trajectories in \emph{almost every} direction are trapped in a strip of the plane. 
It is a work in progress with Fr{\k{a}}czek 
to prove the same result for wind-tree tiling billiards. 

\printbibliography

\vspace{0.5cm}

Magali Jay, \textsc{Max Planck Institute for Mathematics in the Sciences, Leipzig, Germany}

E-mail adrdress: \url{magali.jay@mis.mpg.de}
\end{document}